\newtheorem{theorem}{Theorem}
\newtheorem{lemma}{Lemma}
\newtheorem{proposition}{Proposition}
\newtheorem{definition}{Definition}
\newtheorem{remark}{Remark}
\newcommand{\G}{\mathcal{G}}
\renewcommand{\L}{\mathcal{L}}
\begin{document}
\title{Gorenstein flat dimension of complexes.}
\author{Alina Iacob}

\address{Department of Mathematical Sciences\\
 Georgia Southern University\\
     Statesboro, GA 30458 USA\\
     Email: aiacob@georgiasouthern.edu}


\keywords{ }

\maketitle

\begin{abstract}
We define a notion of Gorenstein flat dimension for unbounded
complexes over left GF-closed rings.\\
Over Gorenstein rings we introduce a notion of Gorenstein cohomology
for complexes; we also define a generalized Tate cohomology for
complexes over Gorenstein rings, and we show that there is a close
connection between the absolute, the Gorenstein and the generalized
Tate cohomology.
\end{abstract}

\section {introduction}\label{sec:introduction}

In 1966 Auslander introduced the notion of G-dimension of a finite
$R$-module over a commutative noetherian local ring. In 1969
Auslander and Bridger extended this notion to two sided noetherian
rings. Calling the modules of G-dimension zero Gorenstein projective
modules, in 1995 Enochs and Jenda defined Gorenstein projective
(whether finitley generated or not) and Gorenstein injective modules
over an arbitrary ring. 
Another extension of the G-dimension is based on Gorenstein flat
modules. These modules were introduced by Enochs, Jenda and
Torrecillas (\cite{enochs:93:gorenstein}).

Gorenstein homological algebra is the relative version of
homological algebra that uses the Gorenstein projective (Gorenstein
injective, Gorenstein flat respectively) modules instead of the
usual projective (injective, flat respectively) modules. The
Gorenstein dimensions for modules are defined in a similar manner
with the classical homological dimensions, but using Gorenstein
projective (Gorenstein injective, Gorenstein flat respectively)
resolutions instead of projective (injective, flat respectively)
resolutions.

It seems quite likely that there is a version of Gorenstein
homological algebra in the category of complexes. For homologically
right bounded 
complexes over commutative rings, Yassemi
(\cite{yassemi:95:gorenstein}) and Christensen
(\cite{christensen:00:gorenstein}) introduced a Gorenstein
projective dimension. Christensen, Frankild and Holm gave
generalizations of the Gorenstein projective, Gorenstein injective
and Gorenstein flat dimensions to homologically right bounded
complexes  (\cite{christensen:06:ongorenstein}). Veliche
(\cite{veliche:04:gorenstein}) extended the concept of Gorenstein
projective dimension to the setting of unbounded complexes over
associative rings. Asadollahi and Salarian
(\cite{asadollahi:06:gicomplexes}) defined the dual notion, that of
Gorenstein injective dimension of complexes over an associative
ring.

We define here a notion of Gorenstein flat dimension of unbounded
complexes over left GF-closed rings. These are the rings for which
the class of left Gorenstein flat modules is closed under
extensions.
The class of left GF-closed rings includes (strictly) the one of right coherent rings and the one of rings of finite weak dimension 
(for examples of left GF-closed rings that are neither right
coherent nor of finite weak dimension see \cite{bennis:08:rings}).

 Our definition of Gorenstein flat dimension for complexes is
given by means of DG-flat resolutions. We say that the Gorenstein
flat dimension of a complex $N$ of left $R$-modules, $Gfd N$, is
less than or equal to $g \in Z$ , if there exists a DG-flat
resolution $F \rightarrow N$ with $sup H(F) \le g$ and with $C_j(F)$
Gorenstein flat for all $j \ge g$. If $Gfd N \le g$ for all integers
$g$ then $Gfd N = - \infty$; if $Gfd N \le g$ does not hold for any
$g$ then $Gfd N = \infty$. We show that most properties of modules
of finite Gorenstein flat dimension are preserved for complexes of
finite Gorenstein flat dimension. We also show that for a right
homologically bounded complex our definition agrees with
\cite{christensen:06:ongorenstein}, Definition 2.7 .

The second part of this paper deals with Gorenstein cohomology and
generalized Tate cohomology for complexes over Gorenstein rings. The
fact that over such a ring every complex has a special Gorenstein
projective precover (\cite{garcia:99:covers}) allows us to define a
notion of Gorenstein relative cohomology for complexes: if $G
\rightarrow M$ is a special Gorenstein projective precover of $M$,
then for a complex $N$ we define the nth Gorenstein cohomology group
$Ext_{\mathcal{G}} ^n (M, N)$ by the equality $Ext_{\mathcal{G}} ^n
(M, N)= H^n \mathcal{H}om (G, N)$.

Over Gorenstein rings again, we also define a notion of generalized
Tate cohomology, $\overline{Ext} ^n (M, -)$, by the combined use of
a special Gorenstein projective precover and a DG-projective
precover of $M$. We show that there is a close connection between
the absolute, the Gorenstein and this generalized Tate cohomology:
for each complex $N$ there exists an exact sequence $ \ldots
\rightarrow  Ext^n_{\G} (M,N) \rightarrow Ext^n_R(M,N) \rightarrow
\overline{Ext}^n (M,N) \rightarrow \ldots$ (where $Ext_R ^n (-,-)$
are the absolute cohomology functors). We also prove that for a
bounded complex $M$ over a Gorenstein ring, $\overline{Ext} ^n (M,
N) \simeq \widehat{Ext}^n(M,N)$ for $n > sup M$, for any $R$-module
$N$ (where $\widehat{Ext}^n(-,-)$ are the Tate cohomology functors
introduced by Veliche \cite{veliche:04:gorenstein}).

\section{Preliminaries}\label{sec:preliminaries}
Let $R$ be an associative ring with unit. By $R$-module we mean left
$R$-module.

A (chain) complex $C$ of $R$-modules is a sequence $C = \ldots
\rightarrow C_2 \xrightarrow{\partial_2} C_1
\xrightarrow{\partial_1} C_0 \xrightarrow{\partial_0} C_{-1}
\xrightarrow{\partial_{-1}} C_{-2} \rightarrow \ldots$ of
$R$-modules and $R$-homomorphisms such that $\partial_{n-1}\circ
\partial_n = 0$ for all $n\in \mathbb{Z}$.

A complex $C$ is exact if for each n, $Ker \partial_n = Im
\partial_{n+1}$.


\begin{definition}
A module $M$ is Gorenstein projective if there is a $Hom (-, Proj)$
exact exact complex $ \ldots \rightarrow P_1 \rightarrow P_0
\rightarrow P_{-1} \rightarrow P_{-2}\rightarrow \ldots $ of
projective modules such that $M = Ker(P_0 \rightarrow P_{-1})$.
\end{definition}

The class of Gorenstein projective modules is projectively
resolving, i.e. if $0 \rightarrow M' \rightarrow M \rightarrow M"
\rightarrow 0$ is an exact sequence with $M"$ a Gorenstein
projective module, then $M'$ is Gorenstein projective if and only if
$M$ is Gorenstein projective (\cite{holm:04:gorenstein}, Theorem
2.5).

The dual notion is that of Gorenstein injective module:
\begin{definition}
A module $G$ is Gorenstein injective if there is a $Hom (Inj, -)$
exact exact complex $ \ldots \rightarrow E_1 \rightarrow E_0
\rightarrow E_{-1} \rightarrow E_{-2}\rightarrow \ldots $ of
injective modules such that $G = Ker(E_0 \rightarrow E_{-1})$.
\end{definition}

The class of Gorenstein injective modules is injectively resolving:
if $0 \rightarrow G' \rightarrow G \rightarrow G" \rightarrow 0$ is
an exact sequence with $G'$ a Gorenstein injective module, then $G"$
is Gorenstein injective if and only if $G$ is Gorenstein injective
(\cite{holm:04:gorenstein}, Theorem 2.6)

The Gorenstein flat modules are defined in terms of the tensor
product:

\begin{definition}
A module $N$ is Gorenstein flat if there is an $Inj \otimes -$ exact
exact sequence $ \ldots \rightarrow F_1 \rightarrow F_0 \rightarrow
F_{-1} \rightarrow F_{-2}\rightarrow \ldots $ of flat modules such
that $N = Ker(F_0 \rightarrow F_{-1})$.
\end{definition}

Enochs and Jenda proved (\cite{enochs:00:relative}) that over
Gorenstein rings the class of Gorenstein flat modules is
projectively resolving. Holm (\cite{holm:04:gorenstein}) showed that
if the ring $R$ is right coherent then the class of left Gorenstein
flat modules is projectively resolving.

Bennis (\cite{bennis:08:rings}) calls a ring $R$ left GF-closed if
the class of left Gorenstein flat modules is closed under
extensions. Over such a ring, the class of Gorenstein flat modules
is projectively resolving (\cite{bennis:08:rings}, Theorem 2.3).

A Gorenstein projective (flat) resolution of a module $M$ is an
exact sequence $\ldots \rightarrow P_1 \rightarrow P_0 \rightarrow M
\rightarrow 0$, with each $P_j$ Gorenstein projective (Gorenstein
flat).

\begin{definition}
The Gorenstein projective (flat) dimension of an $R$-module $M$,
$Gpd_R M$ ($Gfd_R M$ respectively) is defined as the least integer n
such that there is a Gorenstein projective (flat) resolution $0
\rightarrow P_n \rightarrow \ldots \rightarrow P_0 \rightarrow M
\rightarrow 0$ . If such an n does not exist then the Gorenstein
projective (flat) dimension of $M$ is $\infty$.
\end{definition}

If $Gpd M = n$ then any n-syzygy of $M$ is Gorenstein projective.\\
If the ring is left GF-closed and $Gfd M = n$ then any n-flat syzygy of $M$ is Gorenstein flat.\\

The Gorenstein injective dimension is defined in terms of Gorenstein
injective resolutions. A Gorenstein injective resolution of a module
$M$ is an exact complex $0 \rightarrow M \rightarrow G^0 \rightarrow
G^1 \rightarrow \ldots$ with each $G^j$ Gorenstein injective.

\begin{definition}
The Gorenstein injective dimension of a module $M$, $Gid_R M$, is
the least integer $n \ge 0$ such that there is a Gorenstein
injective resolution $0 \rightarrow M \rightarrow G^0 \rightarrow
\ldots \rightarrow G^n \rightarrow 0$. If such an n does not exist
then $Gid M = \infty$.
\end{definition}


In the category of complexes the projective (injective, flat)
dimension were defined by Avramov and Foxby, by means of
DG-projective (DG-injective, DG-flat) resolutions.

 If $X$ and $Y$ are both complexes of left
$R$-modules then $\mathcal Hom(X,Y)$ denotes the complex with
$\mathcal Hom(X,Y)_n=\Pi_{q=p+n}Hom_R(X_p,Y_q)$ and with
differential given by $\partial(f)=\partial^Y \circ
f-(-1)^nf\circ\partial^X$, for $f\in \mathcal Hom(X,Y)_n$.

\begin{definition}
A complex $P$ is DG-projective if each $P_n$ is projective and
$\mathcal{H}om (P, E)$ is exact for any exact complex $E$.
\end{definition}

For example, every right bounded complex of projective modules \\
$P = \ldots \rightarrow P_{n_0+2} \rightarrow  P_{n_0+1} \rightarrow
P_{n_0} \rightarrow 0$ is a
DG-projective complex (\cite{avramov:91:homological}, Remark 1.1 P).\\
The class of DG-projective complexes is projectively resolving: if
$0 \rightarrow P' \rightarrow P \rightarrow P" \rightarrow 0$ is a
short exact sequence of complexes with $P"$  DG-projective, then
$P'$ is DG-projective if and only if $P$ is DG-projective.
(\cite{enochs:96:orthogonality}, Remark pp. 31).

A quasi-isomorphism $P \rightarrow X$ with $P$ DG-projective is
called a DG-projective resolution of $X$. By
(\cite{enochs:96:orthogonality}, Corollary 3.10), every complex has
a surjective DG-projective resolution.

Veliche defined the Gorenstein projective dimension for unbounded
complexes over associative rings; her definition uses complete
resolutions.

\begin{definition}
Let $N$ be a complex of left $R$-modules. A complete resolution of
$N$ is a diagram $T \xrightarrow{u} P \rightarrow N$ with $P
\rightarrow N$ a DG-projective resolution, $T$ a $Hom (-, Proj)$
exact exact complex of projective modules, and $u$ a map of
complexes such that $u_i$ is bijective for $i \gg 0$.
 \end{definition}

\begin{definition}
The Gorenstein projective dimension of a complex $N$ is defined by
$Gpd N$ = $inf \{n \in Z$, $T \xrightarrow{u} P \rightarrow N $ is a
complete resolution such that $u_i$ is bijective for each $i \ge
n\}$.
\end{definition}

The dual notion of DG-projective complex is that of DG-injective
complex.
\begin{definition}
A complex $I$ is DG-injective if each $I^n$ is injective and if
$\mathcal Hom(E,I)$ is exact for any exact complex $E$.
\end{definition}
For example, every left bounded complex of injective modules \\
$I = 0 \rightarrow I_{n_0} \rightarrow I_{n_0 -1} \rightarrow
\ldots$ is
DG-injective (\cite{avramov:91:homological},Remark 1.1 I).\\
By \cite{enochs:96:orthogonality}, Remark, pp. 31, the class of
DG-injective complexes is injectively resolving.\\
A DG-injective resolution of $X$ is a quasi-isomorphism $X
\rightarrow I$ with $I$ DG-injective. By
\cite{enochs:96:orthogonality}, Corollary 3.10, every complex has an
injective DG-injective resolution.\\
The Gorenstein injective dimension for unbounded complexes over
associative rings was introduced by Asadollahi and Salarian. Their
definition is given in terms of complete coresolutions:

\begin{definition}
Let $N$ be a complex of left $R$-modules. A complete coresolution of
$N$ is a diagram $N \rightarrow I \xrightarrow{\nu} T$ with $N
\rightarrow I$ a DG-injective resolution, $T$ a $Hom (Inj, -)$ exact
exact complex of injective modules, and $\nu$ a map of complexes
such that ${\nu}_i$ is bijective for $i \ll 0$.
 \end{definition}

\begin{definition}
The Gorenstein injective dimension of a complex $N$ is defined by
$Gid N$ = $inf \{-n \in Z$, $N \rightarrow I \xrightarrow{\nu} T$ is
a complete coresolution such that ${\nu}_i$ is bijective for each $i
\le n \}$.
\end{definition}



The DG-flat complexes are defined in terms of the tensor product of
complexes. The tensor product of a complex of right $R$-modules $X$
and a complex of left $R$-modules $Y$ is the complex of $Z$-modules
$X \otimes Y$ with $(X \otimes Y)_n = \oplus _{t \in Z} X_t \otimes
Y_{n-t}$ and $\delta (x \otimes y) = \delta_t ^X (x) \otimes y +
(-1)^t x \otimes \delta_{n-t}^Y (y)$ for $x \in X_t$ and $y \in
Y_{n-t}$.

\begin{definition}
A complex $F$ of left $R$-modules is DG-flat if each $F_n$ is flat
and for any exact complex $E$ of right $R$-modules the complex $E
\otimes F$ is exact.
\end{definition}

\begin{remark}(\cite{garcia:99:covers}, page 118)
Let $E$ be an exact complex of right $R$-modules. Since $(E \otimes
F)^+ \simeq \mathcal{H}om (E, F^+)$ we have that $F$ is DG-flat if
and only if $F^+$ is DG-injective.
\end{remark}

Thus every right bounded complex of flat modules is DG-flat.

The class of DG-flat complexes is projectively resolving. 

A DG-flat resolution of a complex $X$ is a quasi-isomorphism $F
\rightarrow X$ with $F$ a DG-flat complex. Since every DG-projective
complex is DG-flat, every complex has a surjective
DG-flat resolution.\\
We also recall the definition of a projective (injective, flat
respectively) complex:

\begin{definition}(\cite{garcia:99:covers}, Theorem 3.1.3, Theorem 4.1.3)
A complex $F= \ldots \rightarrow F_{n+1} \xrightarrow{\delta}_{n+1}
F_n \xrightarrow{\delta}_n F_{n-1} \rightarrow \ldots$ is projective
(injective, flat respectively) if $F$ is exact, each $F_n$ is
projective (injective, flat respectively) and $Ker \delta_n$ is
projective (injective, flat respectively) for all $n \in Z$.
\end{definition}

By \cite{enochs:96:orthogonality}, Proposition 3.7, a complex is
projective (injective, flat respectively) if and only if it is exact
and DG-projective (DG-injective, DG-flat respectively).






\section{Gorenstein flat dimension for complexes}


We recall that a ring $R$ is left GF-closed if the class of
Gorenstein flat left $R$-modules is closed under extensions, i.e. if
$0 \rightarrow A \rightarrow B \rightarrow C \rightarrow 0$ is an
exact sequence with $A$ and $C$ Gorenstein flat modules, then $B$
is also Gorenstein flat.\\
Bennis proved (\cite{bennis:08:rings}, Theorem 2.3) that a ring $R$
is left GF-closed if and only if the class of Gorenstein flat left
$R$-modules is projectively resolving.

 Throughout this section $R$ denotes a left
GF-closed ring. 




In \cite{christensen:06:ongorenstein} the authors define the
Gorenstein flat dimension for
homologically right-bounded complexes over associative rings: \\
\begin{definition}
The Gorenstein flat dimension, $Gfd_R X$, of a homologically
right-bounded complex $X$ is defined as\\
$Gfd_R X = inf \{ sup\{l\in Z/ A_l \neq 0\}, A$ a right-bounded
complex of Gorenstein flat modules such that $A \simeq X \}$, where
$\simeq$ is the equivalence relation induced by quasi-isomorphisms.

\end{definition}

We introduce a notion of Gorenstein flat dimension for unbounded
complexes over left GF-closed rings. We show that for homologically
right-bounded complexes over left GF-closed rings, our definition
agrees with \cite{christensen:06:ongorenstein}, 2.7\\

We show first that if a complex $N$ has a DG-flat resolution $F
\rightarrow N$ with $sup H(F) \le g$ and with $C_j(F)$ Gorenstein
flat for $j \ge g$, then for every DG-flat resolution $F'
\rightarrow N$, $sup H(F') \le g$ and $C_j(F')$ is Gorenstein flat
for all $j \ge g$. (We recall that for a complex $F = \ldots
\rightarrow F_{n+1} \xrightarrow{f_{n+1}} F_n \xrightarrow{f_n}
F_{n-1} \ldots $, $C_j(F)$ denotes the module $coker f_{j+1}$.)\\

For two complexes, $X$ and $Y$, we denote by $Hom(X,Y) = Z^0
\mathcal{H}om (X, Y)$
the group of maps of complexes from $X$ to $Y$.\\
We denote by $Ext(-,-)$ the right derived functors of $Hom(-,-)$. By
\cite{enochs:96:orthogonality}, Proposition 3.5, a complex $P$ is
DG-projective if and only if $Ext^1(P, E)=0$ for any exact complex
$E$.

We begin by proving:\\

\begin{lemma}
Let $R$ be a left GF-closed ring and let $N$ be a complex of left
$R$-modules. If $N$ has a DG-flat resolution $F \rightarrow N$ such
that $sup H(F) \le g$ and $C_j(F)$ is Gorenstein flat for $j \ge g$,
then for any DG-projective resolution $P \rightarrow N$, we have
$sup H(P) \le g$
 and $C_j(P)$ is Gorenstein flat for any $j \ge g$.
\end{lemma}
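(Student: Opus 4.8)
The plan is to compare two resolutions of $N$ — the given DG-flat one $F \rightarrow N$ and an arbitrary DG-projective one $P \rightarrow N$ — by using a common ``roof''. Since every complex has a surjective DG-projective resolution, and since a DG-projective complex is DG-flat, I would first reduce to the case where $P \rightarrow N$ is a \emph{surjective} DG-projective resolution: if $P \rightarrow N$ and $P' \rightarrow N$ are two DG-projective resolutions, standard comparison arguments (lifting through the quasi-isomorphisms, using $Ext^1(P,E)=0$ for exact $E$) give a quasi-isomorphism $P \rightarrow P'$ whose mapping cone is an exact DG-projective complex, hence a projective complex in the sense of the last definition; one then checks that adding a projective complex changes neither $sup H(-)$ nor the Gorenstein flatness of the cokernels $C_j(-)$ (a projective complex has exact cokernels that are projective, hence Gorenstein flat, and the class of Gorenstein flat modules is closed under the relevant extensions since $R$ is left GF-closed). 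So it suffices to establish the conclusion for \emph{one} chosen DG-projective resolution $P \rightarrow N$.

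Next I would build the comparison between $F$ and that chosen $P$. Take a surjective DG-projective resolution $P \rightarrow N$; since $P$ is DG-flat as well, and $F \rightarrow N$ is a quasi-isomorphism with $F$ DG-flat, lift along the quasi-isomorphisms to obtain a quasi-isomorphism $\varphi: P \rightarrow F$ over $N$ (possible because $Ext^1(P,E)=0$ for $E$ exact, applied to the exact kernel/cokernel complexes). Form the mapping cone $M$ of $\varphi$: it is an exact complex, and it is DG-flat (as a mapping cone of a map between DG-flat complexes, the class of DG-flat complexes being projectively resolving and closed under such extensions — the cone sits in a degreewise-split exact sequence $0 \rightarrow F \rightarrow M \rightarrow P[1] \rightarrow 0$). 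An exact DG-flat complex is a flat complex by the cited Proposition 3.7, so in particular all its cycles $Z_j(M)$ are flat modules. Now the degreewise-split sequence $0 \rightarrow F \rightarrow M \rightarrow P[1] \rightarrow 0$ yields, for each $j$, a short exact sequence of cokernels $0 \rightarrow C_j(F) \rightarrow C_j(M) \rightarrow C_{j-1}(P) \rightarrow 0$, and on homology a long exact sequence; since $H(M)=0$, this gives $H(P) \cong H(F)$, so $sup H(P) = sup H(F) \le g$ immediately.

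The remaining and main point is to transfer Gorenstein flatness of $C_j(F)$ to $C_j(P)$ for $j \ge g$. Here is where the hypothesis $sup H(F) \le g$ does real work: for $j \ge g$ the truncation of $F$ (resp. $P$, resp. $M$) below degree $j$ is exact, so $C_j(F)$, $C_j(M)$, $C_j(P)$ fit into the bottom of exact complexes of flat (resp. flat, resp. projective) modules. From the flat complex $M$, the module $C_j(M)$ is a flat module for every $j$ (it is a cycle of the shifted exact flat complex $M$, up to reindexing). Then in $0 \rightarrow C_j(F) \rightarrow C_j(M) \rightarrow C_{j-1}(P) \rightarrow 0$, with $C_j(F)$ Gorenstein flat and $C_j(M)$ flat (hence Gorenstein flat), I want to conclude $C_{j-1}(P)$ is Gorenstein flat. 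That is exactly the statement that the class of Gorenstein flat modules is projectively resolving — which holds because $R$ is left GF-closed (Bennis, Theorem 2.3): in the sequence the quotient $C_{j-1}(P)$ is Gorenstein flat provided the sub and the total are, \emph{and} $C_j(F)$ has a suitable resolution; more precisely one uses the projectively-resolving property in the form: if $0 \rightarrow A \rightarrow B \rightarrow C \rightarrow 0$ is exact with $A$ Gorenstein flat and $B$ Gorenstein flat, then $C$ is Gorenstein flat. Running this for all $j \ge g+1$ (and handling $j=g$ by the same argument applied to the sequence one step down, using $C_g(F)$ Gorenstein flat) gives $C_j(P)$ Gorenstein flat for all $j \ge g$. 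The delicate bookkeeping — making sure the index shift in the mapping cone lines up so that ``$j \ge g$'' on the $P$-side really does land in the range where the relevant $F$-cokernels are Gorenstein flat and the $M$-cokernels are flat — is the step I expect to require the most care; everything else is an application of the projectively-resolving properties of the three classes (DG-flat complexes, flat complexes, Gorenstein flat modules) already recorded above.
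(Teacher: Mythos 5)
There is a genuine gap at the decisive step. After forming the cone $M$ of a comparison map $\varphi:P\rightarrow F$, your sequences $0 \rightarrow C_j(F) \rightarrow C_j(M) \rightarrow C_{j-1}(P) \rightarrow 0$ place the module you must prove Gorenstein flat, namely $C_{j-1}(P)$, in the \emph{quotient} position, and you then invoke ``projectively resolving'' in the form: $A$ and $B$ Gorenstein flat in $0\rightarrow A \rightarrow B \rightarrow C \rightarrow 0$ implies $C$ Gorenstein flat. That is not what projectively resolving means (it gives closure under extensions and under kernels of epimorphisms onto objects of the class), and the implication you use is false even over very nice left GF-closed rings: over $\mathbb{Z}$ (finite weak dimension, hence GF-closed, and Gorenstein flat $=$ flat) the sequence $0 \rightarrow \mathbb{Z} \xrightarrow{2} \mathbb{Z} \rightarrow \mathbb{Z}/2 \rightarrow 0$ has flat sub and middle but non--Gorenstein-flat quotient. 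Since every rotation of the cone construction leaves $C_j(P)$ in the quotient position, this is not a bookkeeping issue; the approach cannot be closed without extra input (the paper's Lemma 2, which handles a quotient, needs the additional hypothesis that its character dual is Gorenstein injective, and its converse direction would require coherence, not just GF-closedness). A secondary inaccuracy: the left-exactness of the cokernel sequence of a degreewise split short exact sequence of complexes does not hold ``for each $j$''; it holds at $j$ only when $\varphi(Z_j(P))\subseteq B_j(F)$, e.g.\ when $H_j(F)=0$, so it is fine for $j\ge g+1$ but needs to be said.

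The paper's proof turns the extension around so that only closure under extensions is needed: after arranging (by adding a flat, resp.\ projective, complex) that the comparison map $P\rightarrow F$ over $N$ is surjective, its kernel $V$ is exact and DG-flat, hence a flat complex, so $C_g(V)$ is flat; the resulting exact sequence $0 \rightarrow C_g(V) \rightarrow C_g(P) \rightarrow C_g(F) \rightarrow 0$ exhibits $C_g(P)$ in the \emph{middle}, as an extension of the Gorenstein flat module $C_g(F)$ by a flat module, so the GF-closed hypothesis gives $C_g(P)$ Gorenstein flat; then, since $H_j(P)=0$ for $j>g$ and each $P_j$ is flat, closure under kernels of epimorphisms (Bennis) gives $C_j(P)$ Gorenstein flat for all $j\ge g$. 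If you want to keep a cone-style argument you must reproduce this asymmetry (surjective comparison with exact flat kernel), not rely on cokernel closure of the Gorenstein flat class.
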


\begin{proof}
If $P \rightarrow N$ is a DG-projective resolution then $sup H(P)
= sup H(N) = sup H(F) \le g$.\\
We can assume without loss of generality that $F \rightarrow N$ is a
surjective DG-flat resolution (if not, let $\overline{F} \rightarrow
N$ be surjective with $\overline{F}$ a flat complex; then $F \oplus
\overline{F} \rightarrow N$ is a surjective DG-flat resolution and $C_j(F) \oplus C_j(\overline{F})$ is Gorenstein flat for all $j \ge g$).\\
Then there is an exact sequence $0 \rightarrow U \rightarrow F
\rightarrow N \rightarrow 0$ with $U$ exact; this gives an exact
sequence $0 \rightarrow Hom(P, U) \rightarrow Hom(P,F) \rightarrow
Hom(P, N) \rightarrow Ext^1 (P, U) =0$
(\cite{enochs:96:orthogonality}, Proposition 3.6). So there exists a
map of complexes $P \rightarrow F$ that makes the diagram

\[
\begin{diagram}
\node{}\node{P}\arrow{sw,t,..}{u}\arrow{s}\\
\node{F}\arrow{e}\node{N}
\end{diagram}
\]

commutative. Since both $P \rightarrow N$ and $F \rightarrow N$ are
quasi-isomorphisms, so is $P \rightarrow F$.\\
We can assume that $P \rightarrow F$ is a surjective
quasi-isomorphism (if not, let $\overline{P} \rightarrow F$ be
surjective with $\overline{P}$ a projective complex; then $P \oplus
\overline{P} \rightarrow F$ is a surjective quasi-isomorphism). Then
there exists an exact sequence $0 \rightarrow V \rightarrow P
\rightarrow F \rightarrow 0$, with $V$ an exact complex. Both $F$
and $P$ are DG-flat complexes, so $V$ is DG-flat. Thus $V$ is exact
and DG-flat,
so $V$ is flat.         \hspace{92mm}        (1)\\
We have an exact sequence $0 \rightarrow C_g(V) \rightarrow C_g(P)
\rightarrow C_g(F) \rightarrow 0$ with $C_g(F)$ Gorenstein flat and
$C_g(V)$ flat (by (1)). It follows that $C_g(P)$ is Gorenstein flat.
Since the complex $ \ldots \rightarrow P_{g+1} \rightarrow P_g
\rightarrow C_g(P) \rightarrow 0$ is exact with $C_g(P)$ Gorenstein
flat, each $P_j$ flat and the ring is left GF-closed it follows that
$C_j(P)$ is Gorenstein flat for any $j \ge g$.\\

\end{proof}

We will also use the following result:

\begin{lemma}
Let $R$ be a left GF-closed ring. Let $0 \rightarrow A \rightarrow B
\rightarrow C \rightarrow 0$ be an exact sequence of $R$-modules. If
$A$ is flat, $B$ is Gorenstein flat and $C^+$ is Gorenstein
injective, then $C$ is Gorenstein flat.
\end{lemma}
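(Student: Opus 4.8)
The plan is to build a complete flat resolution of $C$ out of data we already have. Since $B$ is Gorenstein flat, there is an $Inj\otimes-$ exact exact complex of flat modules $\mathbf{F}=\cdots\to F_1\to F_0\to F_{-1}\to\cdots$ with $B=\ker(F_0\to F_{-1})$; in particular $B$ has a flat (right) resolution coming from the ``left half'' of $\mathbf F$, and the ``right half'' $0\to B\to F_{-1}\to F_{-2}\to\cdots$ stays $Inj\otimes-$ exact. First I would splice the short exact sequence $0\to A\to B\to C\to 0$ (with $A$ flat) onto the front of the right half of $\mathbf F$: this produces an exact sequence $0\to C\to F_{-1}/A\to F_{-2}\to F_{-3}\to\cdots$, or more cleanly, replace the term $F_0$ by the flat module $F_0/A'$ — actually the cleanest move is to note $0\to A\to B\to C\to 0$ together with $B$ Gorenstein flat gives, since the ring is left GF-closed (hence the Gorenstein flat modules are projectively resolving), control over syzygies; but here $A$ is flat rather than Gorenstein flat on the quotient side, so instead I would use $A$ flat to see that $C$ has finite flat dimension ``locally'' is false — rather, I use it only to transport $Inj\otimes-$ exactness.

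So the key steps, in order: (1) From $B$ Gorenstein flat pick $\mathbf F$ as above and set $B = \ker(F_0\to F_{-1})$, so $0\to B\to F_{-1}\to F_{-2}\to\cdots$ is an $Inj\otimes-$ exact complex of flats with kernel the Gorenstein-flat syzygy $B$. (2) Take a flat (ordinary) resolution $\cdots\to G_1\to G_0\to C\to 0$ of $C$; splicing $0\to A\to B\to C\to 0$ with this and with the tail $0\to B\to F_{-1}\to\cdots$ we get a doubly infinite complex $\mathbf C = \cdots\to G_1\to G_0\to F_{-1}\to F_{-2}\to\cdots$ of flat modules (the map $G_0\to F_{-1}$ being $G_0\twoheadrightarrow C\hookrightarrow B/A\hookrightarrow$... — one must check this composite lands correctly; concretely use that $B\to F_{-1}$ kills nothing of $C$'s image because $A\mapsto 0$ in $C$, so the splice $G_0\to C\subseteq B\to F_{-1}$ works and its kernel is exactly the kernel of $G_0\to C$). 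This $\mathbf C$ is exact with $C=\ker(G_{-1}\to F_{-1})$ where I index so that $C$ sits in the right spot. (3) Verify $\mathbf C$ is $Inj\otimes-$ exact. For the part $\cdots\to G_0\to F_{-1}\to\cdots$ this reduces, after tensoring with an injective $I$, to a Tor–vanishing statement: since each $G_i$ and each $F_{-j}$ is flat and $C$ is flat-to-Gorenstein-flat via $A$ flat, one checks $\mathrm{Tor}_1^R(I,C)=0$ using $0\to A\to B\to C\to 0$ with $A$ flat (so $\mathrm{Tor}_1(I,C)\hookrightarrow 0$ is false — actually $\mathrm{Tor}_1(I,C)\cong \ker(I\otimes A\to I\otimes B)$-type term vanishes because $A$ is flat forces $\mathrm{Tor}_1(I,C)\cong\mathrm{Tor}_1(I,B)=0$, $B$ being Gorenstein flat), and similarly all higher Tor's of $I$ against the syzygies vanish by dimension shift along the $G$-resolution together with the $Inj\otimes-$ exactness of the $\mathbf F$-tail.

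The main obstacle I expect is precisely step (3): assembling the $Inj\otimes-$ exactness of the spliced complex at the splice point, where the left resolution of $C$ meets the Gorenstein-flat syzygy structure of $B$. The hypothesis that $C^+$ is Gorenstein injective is exactly what is needed here — and this is where I would invoke the dual picture via the Remark on page 118 of \cite{garcia:99:covers} and Lemma 1: applying $(-)^+$ turns the would-be complete flat resolution into a complete injective resolution whose syzygy is $C^+$, so $Inj\otimes \mathbf C$ exact $\Leftrightarrow$ $\mathcal{H}om(-,\mathbf C^+)$ applied appropriately is exact, which follows once $C^+$ is known Gorenstein injective and $A^+$ is injective (as $A$ is flat). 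Concretely: dualize $0\to A\to B\to C\to 0$ to $0\to C^+\to B^+\to A^+\to 0$ with $A^+$ injective and $B^+$ Gorenstein injective; since $C^+$ is Gorenstein injective by hypothesis, splice with a complete injective resolution of $B^+$ to build a complete injective resolution of $C^+$ out of the $F_{-j}^{\,+}$ and an injective coresolution of $C^+$; then $(-)^+$ of this (using that $(-)^+$ is exact and faithful and sends injectives to... ) recovers that the flat complex $\mathbf C$ is $Inj\otimes-$ exact. Once $\mathbf C$ is an $Inj\otimes-$ exact exact complex of flat modules with $C$ as a kernel, $C$ is Gorenstein flat by definition, and the left-GF-closed hypothesis is what guarantees the earlier syzygy $B$ sitting in $\mathbf F$ can be used without leaving the Gorenstein-flat class. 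That finishes the argument.
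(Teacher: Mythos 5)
Your construction breaks down at the decisive point, the splice in step (2). From $B$ Gorenstein flat you get an embedding of $B$ into a flat module (the map $B\rightarrow F_0$, resp.\ $B\rightarrow F_{-1}$ in your indexing, is injective), and this embedding does \emph{not} factor through the quotient $C=B/A$: the submodule $A\subseteq B$ maps injectively into the flat module, not to zero, so there is no map "$C\hookrightarrow B\rightarrow F_{-1}$" and the doubly infinite complex $\mathbf{C}$ you want to build does not exist as described. Producing an embedding $0\rightarrow C\rightarrow X$ with $X$ flat and with Gorenstein flat cokernel is precisely the content of the lemma, and it is not obtained by splicing; the paper gets it by forming the pushout of $0\rightarrow A\rightarrow B\rightarrow C\rightarrow 0$ along $0\rightarrow B\rightarrow F\rightarrow Y\rightarrow 0$ ($F$ flat, $Y$ Gorenstein flat), which yields $0\rightarrow C\rightarrow X\rightarrow Y\rightarrow 0$ and $0\rightarrow A\rightarrow F\rightarrow X\rightarrow 0$. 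Then the hypothesis on $C^+$ is used to show $X^+$ is Gorenstein injective (since $Y^+$ is Gorenstein injective and the class is injectively resolving), while $0\rightarrow X^+\rightarrow F^+\rightarrow A^+\rightarrow 0$ with $F^+,A^+$ injective gives $id\,X^+\le 1$; a Gorenstein injective module of finite injective dimension is injective, so $X$ is flat, and GF-closedness applied to $0\rightarrow C\rightarrow X\rightarrow Y\rightarrow 0$ finishes. Nothing in your proposal supplies a substitute for this pushout step.

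Two further problems in step (3). First, the Tor argument is misjustified: with $A$ flat and $\mathrm{Tor}_1(I,B)=0$ the long exact sequence gives $\mathrm{Tor}_1(I,C)\cong \ker(I\otimes A\rightarrow I\otimes B)$, not $\mathrm{Tor}_1(I,C)\cong \mathrm{Tor}_1(I,B)$; the vanishing you want really comes from $\mathrm{Tor}_1(I,C)^+\cong \mathrm{Ext}^1(I,C^+)=0$, i.e.\ from the hypothesis that $C^+$ is Gorenstein injective. Second, even granting all Tor vanishing, that only controls the left half of a would-be complete flat resolution; the right half (an $Inj\otimes -$ exact coresolution of $C$ by flat modules) is never produced, and the concluding duality paragraph cannot produce it: applying $(-)^+$ twice does not return the original complex, so "dualize, splice complete injective resolutions of $B^+$ and $C^+$, then dualize back" does not yield an $Inj\otimes -$ exact exact complex of flat left modules containing $C$ as a syzygy. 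As it stands the proposal has a genuine gap at the heart of the lemma.
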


\begin{proof}
Since $B$ is Gorenstein flat there is an exact sequence $0
\rightarrow B \rightarrow F \rightarrow Y \rightarrow 0$, with $F$
flat and $Y$ Gorenstein flat. Consider the push out
diagram:\\

\[
\begin{diagram}
\node{}\node{}\node{0}\arrow{s}\node{0}\arrow{s}\\
\node{0}\arrow{e}\node{A}\arrow{s,=}\arrow{e}\node{B}\arrow{s}\arrow{e}\node{C}\arrow{s}\arrow{e}\node{0}\\
\node{0}\arrow{e}\node{A}\arrow{e}\node{F}\arrow{s}\arrow{e}\node{X}\arrow{s}\arrow{e}\node{0}\\
\node{}\node{}\node{Y}\arrow{s}\arrow{e,=}\node{Y}\arrow{s}\\
\node{}\node{}\node{0}\node{0}
\end{diagram}
\]

The exact sequence $0 \rightarrow C \rightarrow X \rightarrow Y
\rightarrow 0$ gives an exact sequence $0 \rightarrow Y^+
\rightarrow X^+ \rightarrow C^+ \rightarrow 0$. By hypothesis, $C^+$
is Gorenstein injective. By \cite{holm:04:gorenstein}, Theorem 3.6,
$Y^+$ is Gorenstein injective. Since the class of Gorenstein
injective modules is injectively resolving it follows that $X^+$ is
Gorenstein injective. The exact sequence $0 \rightarrow A
\rightarrow F \rightarrow X \rightarrow 0$ also gives an exact
sequence $0 \rightarrow X^+ \rightarrow F^+ \rightarrow A^+
\rightarrow 0$. Since $A$ and $F$ are flat, both $F^+$ and $A^+$ are
injective. So $id X^+ \le 1$. Since $X^+$ is Gorenstein injective
and has finite injective dimension it
follows (\cite{enochs:00:relative}, Proposition 10.1.2) that $X^+$ is injective. By \cite{enochs:00:relative}, Th. 3.2.10, $X$ is flat.\\
Since $R$ is left GF-closed, the sequence $0 \rightarrow C
\rightarrow X \rightarrow Y \rightarrow 0$ is exact, $X$ is flat and
$Y$ is Gorenstein flat, it follows that $C$ is Gorenstein flat.
\end{proof}

And, we will also use:

\begin{lemma}
Let $R$ be a left GF-closed ring, and let $N$ be a complex of left
$R$-modules. If $N$ has a DG-flat resolution $F \rightarrow N$ with
$sup H(F) \le g$ and with $C_j(F)$ Gorenstein flat for $j \ge g$,
then $Gid N^+ \le g$.\end{lemma}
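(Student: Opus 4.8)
The plan is to dualise the given resolution and assemble a complete coresolution of $N^{+}$ out of it.

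First I would apply the functor $(-)^{+}=\Hom_{\mathbb{Z}}(-,\mathbb{Q}/\mathbb{Z})$ to the quasi-isomorphism $F\rightarrow N$. Since $(-)^{+}$ is exact and contravariant, $N^{+}\rightarrow F^{+}$ is again a quasi-isomorphism; by the Remark of \cite{garcia:99:covers} recalled in the Preliminaries, $F^{+}$ is DG-injective because $F$ is DG-flat, and each $(F_{n})^{+}$ is injective because each $F_{n}$ is flat. Hence $N^{+}\rightarrow F^{+}$ is a DG-injective resolution. Dualising homology, $H_{n}(F^{+})\cong H_{-n}(F)^{+}=0$ whenever $-n>g$, so $F^{+}$ is exact in every degree $<-g$; dualising cokernels, the cocycle module $Z_{n}(F^{+})$ is isomorphic to $C_{-n}(F)^{+}$, which by \cite{holm:04:gorenstein}, Theorem 3.6, is Gorenstein injective for every $n\le -g$. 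In particular $Z_{-g}(F^{+})\cong C_{g}(F)^{+}$ is Gorenstein injective.

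Next I would build a totally acyclic complex $T$ by splicing $F^{+}$ with a complete injective resolution of $C_{g}(F)^{+}$. Since $C_{g}(F)^{+}$ is Gorenstein injective, fix a $Hom(Inj,-)$ exact exact complex of injective modules $\cdots\rightarrow E_{1}\rightarrow E_{0}\rightarrow E_{-1}\rightarrow\cdots$ with $C_{g}(F)^{+}=\Ker(E_{0}\rightarrow E_{-1})$, and let $T$ agree with $F^{+}$ in all degrees $\le -g$ and equal $\cdots\rightarrow E_{1}\rightarrow E_{0}$ in degrees $\ge -g+1$, the map $E_{0}\rightarrow (F^{+})_{-g}$ being the surjection $E_{0}\rightarrow C_{g}(F)^{+}$ followed by the inclusion $C_{g}(F)^{+}\rightarrow (F^{+})_{-g}$. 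A short diagram chase shows $T$ is exact; its cocycle modules are either the modules $Z_{n}(F^{+})$ with $n\le -g$ or cosyzygies of $C_{g}(F)^{+}$ inside $E_{\bullet}$, hence all Gorenstein injective, and since $\ext{R}{1}(J,G)=0$ for $J$ injective and $G$ Gorenstein injective, it follows that $T$ is $Hom(Inj,-)$ exact. Finally I would construct a chain map $\nu\colon F^{+}\rightarrow T$ which is the identity in degrees $\le -g$ and extend it upward by induction on the degree, the lift at each step existing because the relevant part of $T$ is exact and the obstruction to lifting lives in a group of the form $\ext{R}{1}(\text{injective},\text{Gorenstein injective})$, which vanishes.

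The diagram $N^{+}\rightarrow F^{+}\xrightarrow{\nu}T$ is then a complete coresolution of $N^{+}$ in which $\nu_{i}$ is bijective for all $i\le -g$, so taking $n=-g$ in the definition gives $Gid N^{+}\le g$. The step I expect to be the main obstacle is the third one: checking that splicing at the Gorenstein injective module $C_{g}(F)^{+}$ keeps $T$ simultaneously exact and $Hom(Inj,-)$ exact, and that the comparison map $\nu$ extends to all degrees; this is where the injective-resolving property of the class of Gorenstein injective modules, together with the vanishing of $\ext{R}{1}$ against injective modules, is used.
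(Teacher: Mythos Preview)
Your proposal is correct and follows essentially the same route as the paper: dualise $F\to N$ to a DG-injective resolution $N^{+}\to F^{+}$, splice the tail of $F^{+}$ at the Gorenstein injective module $C_{g}(F)^{+}$ with the left half of a totally acyclic injective complex, and lift the identity to a comparison map using that each $(F_{j})^{+}$ is injective and the target is $\Hom(\mathrm{Inj},-)$ exact. The only cosmetic differences are that the paper obtains $\Hom(\mathrm{Inj},-)$ exactness of the tail by dualising the $\mathrm{Inj}\otimes-$ exactness of $\cdots\to F_{g+1}\to F_{g}\to C_{g}(F)\to 0$, whereas you deduce it from the Gorenstein injectivity of all cocycles, and your indexing of the $E_{i}$'s is off by one (with $C_{g}(F)^{+}=\Ker(E_{0}\to E_{-1})$ the surjection onto $C_{g}(F)^{+}$ is from $E_{1}$, not $E_{0}$).
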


\begin{proof}
Since $F \rightarrow N$ is a DG-flat resolution, it follows that
$N^+ \rightarrow F^+$ is a DG-injective resolution.\\
We have $sup H(F) \le g$, so $inf H(F^+) \ge -g$.\\
Let $C$ denote $C_g(F)$. The exact sequence $ \ldots \rightarrow
F_{g+1} \xrightarrow{g_{g+1}} F_g \xrightarrow{\pi} C \rightarrow 0$
gives an exact sequence $0 \rightarrow C^+ \xrightarrow{\pi ^+}
F^+_g \xrightarrow{f^+_{g+1}} F^+_{g+1} \rightarrow \ldots$.\\
 So $ker f^+_{g+1} = Im \pi^+ \simeq C^+$ is Gorenstein injective
(by \cite{holm:04:gorenstein}, Theorem 3.6, since $C$ is Gorenstein
flat). Then there exists an exact sequence $\overline{T} = \ldots
T_{g+2} \rightarrow T_{g+1} \rightarrow Im \pi^+ \rightarrow 0$,
with each $T_j$ injective and such that the sequence is $Hom (Inj,
-)$ exact. Since each $F^+_j$ is injective we have a commutative
diagram

\[
\begin{diagram}
\node{\cdots}\arrow{e}\node{F_{g-2}^+}\arrow{s,r}{u_g}\arrow{e}
\node{F_{g-1}^+}\arrow{s,r}{u_{g-1}}\arrow{e,t}{\pi^+}\node{Im\;\pi^+}
\arrow{s,=}\arrow{e}\node{0}\\
\node{\cdots}\arrow{e}\node{T_{-g+2}}\arrow{e}\node{T_{-g+1}}\arrow{e}
\node{Ker\;f_{g+1}^+}\arrow{e}\node{0}
\end{diagram}
\]

Since $ \ldots \rightarrow F_{g+1} \rightarrow F_g \rightarrow C
\rightarrow 0$ is exact and $Inj \otimes -$ exact, it follows that
$0 \rightarrow Ker f^+_{g+1} \rightarrow F^+_g \rightarrow F^+_{g+1}
\rightarrow \ldots$ is an exact complex of right $R$-modules that is
also $Hom (Inj, -)$ exact.

So we have a commutative diagram

\[
\begin{diagram}
\node{F^+=\cdots}\arrow{e}\node{F_{g-1}^+}\arrow{s,r}{u_{g-1}}\arrow{e}
\node{F_{g}^+}\arrow{s,=}\arrow{e}\node{F_{g+1}^+}
\arrow{s,=}\arrow{e}\node{\ldots}\\
\node{T=\cdots}\arrow{e}\node{T_{-g+1}}\arrow{e}\node{F_{g}^+}\arrow{e}
\node{F_{g+1}^+}\arrow{e}\node{\ldots}
\end{diagram}
\]

with $T$ an exact complex of injective modules which is also $Hom
(Inj, -)$ exact.\\
 Since $N^+$ has a complete coresolution $N^+ \rightarrow F^+
 \xrightarrow{u} T$ with $u_j = 1_{F^+_j}$ bijective for all $j \le
 -g$ it follows that $Gid N^+ \le g$ (\cite{asadollahi:06:gicomplexes}, Theorem 2.3).

\end{proof}

We can prove now:

\begin{lemma}
Let $R$ be a left GF-closed ring. If a complex $N$ has a DG-flat
resolution $F \rightarrow N$ such that $sup H(F) \le g $ and
$C_j(F)$ is Gorenstein flat for all $j \ge g$, then for any DG-flat
resolution $F' \rightarrow N$ we have that $sup H(F') \le g$ and
$C_i (F')$ is Gorenstein flat for all $i \ge g$.
\end{lemma}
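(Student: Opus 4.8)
The plan is to reduce the statement about an arbitrary DG-flat resolution $F' \to N$ to the already-established case of a DG-projective resolution, via Lemma 1, together with the two intermediate lemmas on modules (Lemma 2 and Lemma 3). First I would choose a surjective DG-projective resolution $P \to N$; by Lemma 1 we already know $\sup H(P) \le g$ and $C_j(P)$ is Gorenstein flat for all $j \ge g$. The equality $\sup H(F') = \sup H(N) = \sup H(P) \le g$ is immediate, so the whole problem is to show $C_j(F')$ is Gorenstein flat for $j \ge g$. As in the proof of Lemma 1, I would first reduce to the case where $F' \to N$ is surjective (replacing $F'$ by $F' \oplus \overline{F'}$ with $\overline{F'}$ flat, noting $C_j(F') \oplus C_j(\overline{F'})$ is Gorenstein flat iff $C_j(F')$ is, since the class is closed under summands and extensions).

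Next I would compare $P$ and $F'$. Since $F' \to N$ is a surjective DG-flat resolution, there is an exact sequence $0 \to U \to F' \to N \to 0$ with $U$ exact; applying $\Hom(P,-)$ and using $\ext{}{1}(P,U) = 0$ (as $P$ is DG-projective and $U$ exact) produces a map $P \to F'$ lifting the resolution maps, and it is a quasi-isomorphism since both $P\to N$ and $F'\to N$ are. After enlarging $P$ by a projective complex if necessary, I may assume $P \to F'$ is surjective, giving $0 \to V \to P \to F' \to 0$ with $V$ exact; since $P$ and $F'$ are both DG-flat, $V$ is DG-flat, hence exact-and-DG-flat, hence a flat complex, so every $C_j(V)$ is flat. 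This yields short exact sequences $0 \to C_j(V) \to C_j(P) \to C_j(F') \to 0$ for each $j$, with $C_j(V)$ flat and $C_j(P)$ Gorenstein flat for $j \ge g$.

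Now the task is to pass Gorenstein flatness from the middle term $C_j(P)$ to the quotient $C_j(F')$ across such a sequence — this is exactly where Lemma 2 enters, and it is the step I expect to be the main obstacle, because over a merely left GF-closed ring the class of Gorenstein flat modules need not be closed under cokernels of monomorphisms from flats without an extra hypothesis. Lemma 2 supplies that hypothesis: it suffices to know $C_j(F')^+$ is Gorenstein injective. To get this, I invoke Lemma 3 applied to the resolution $F' \to N$: it gives $Gid N^+ \le g$, so $N^+ \to (F')^+$ being a DG-injective resolution, the cosyzygies of $(F')^+$ in degrees $\ge g$ (equivalently the modules $C_j(F')^+ \cong \Ker$ of the appropriate differential of $(F')^+$, using $(E\otimes F')^+ \cong \mathcal{H}om(E,(F')^+)$ and exactness) are Gorenstein injective for $j \ge g$. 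Feeding this into Lemma 2 with $A = C_j(V)$ flat, $B = C_j(P)$ Gorenstein flat, $C = C_j(F')$ and $C^+ = C_j(F')^+$ Gorenstein injective, we conclude $C_j(F')$ is Gorenstein flat for all $j \ge g$, which completes the proof.

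One bookkeeping point I would be careful about: establishing that $C_j(F')^+$ is Gorenstein injective from $Gid N^+ \le g$ requires identifying, via the dualizing isomorphism and the exactness/$Hom(\mathrm{Inj},-)$-exactness already extracted in the proof of Lemma 3, the module $C_j(F')^+$ with a genuine cosyzygy of a complete coresolution of $N^+$ in a degree where the comparison map is bijective; this is precisely the sort of identification carried out in Lemma 3, so I would either cite that lemma's internal diagram directly or re-run the short argument. With that in hand, the remaining verifications (closure under summands for the surjectivity reduction, the snake/kernel computation producing $0 \to C_j(V) \to C_j(P) \to C_j(F') \to 0$) are routine.
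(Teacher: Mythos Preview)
Your approach is correct and matches the paper's proof: compare $F'$ to a DG-projective resolution $P$ via a short exact sequence with flat kernel $V$, use Lemma~3 (applied to the \emph{given} resolution $F$, not to $F'$ --- a small slip in your wording, since the Gorenstein-flat-cokernel hypothesis is only known for $F$) to obtain $Gid\,N^+ \le g$, deduce that $C_g(F')^+$ is Gorenstein injective via the DG-injective resolution $N^+ \to (F')^+$ and the Asadollahi--Salarian characterization, and then finish with Lemma~2. The paper streamlines your comparison step by directly choosing a surjective DG-projective resolution $P \to F'$ (so that $P \to N$ is automatically a DG-projective resolution of $N$), which makes your preliminary reduction of $F' \to N$ to the surjective case unnecessary.
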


\begin{proof}

Let $F' \rightarrow N$ be another DG-flat resolution. Then $sup
H(F') = sup H(N) = supH(F) \le g$.

 Since $F' \rightarrow N$ is a DG-flat resolution it follows that
 $N^+ \rightarrow F'^+$ is a DG-injective resolution.\\
  By Lemma 3, $Gid N^+ \le g$. By \cite{asadollahi:06:gicomplexes},
  Theorem 2.3, there exists an injective complete coresolution $N
  \rightarrow F'^+ \xrightarrow{u} T$ with $u_j$ bijective for all
  $j \le -g$. Then $Z_j(F'^+) \simeq Z_j(T)$ is Gorenstein injective
  for all $j \le -g$, that is $Ker f'^+_j$ is Gorenstein injective for all $j \ge
  g+1$.

The exact sequence $F'_{g+1} \xrightarrow{f_{g+1}} F'_g
\xrightarrow{\pi} C_g(F') \rightarrow 0$ gives an exact sequence $0
\rightarrow C_g(F')^+ \xrightarrow{\pi^+} {F'_g}^+
\xrightarrow{{f'^+
_{g+1}}} {F'_{g+1}}^+$. So $C_g(F')^+ \simeq ker {f'^+ _{g+1}}$ is Gorenstein injective.\\ 
Let $P \rightarrow F'$ be a surjective DG-projective resolution.
Then $P \rightarrow N$ is a DG-projective resolution, so by Lemma 1,
$C_j
(P)$ is Gorenstein flat for all $j \ge g$.\\
 There is an exact sequence $0 \rightarrow V \rightarrow P
 \rightarrow F' \rightarrow 0$ with $V$ exact. Since $V$ is also DG-flat (because $P$ and $F'$
 are DG-flat), it follows that $V$ is
 flat. Then $C_j(V)$ is flat, for all j.\\
 We have an exact sequence $0 \rightarrow C_g(V) \rightarrow C_g(P)
 \rightarrow C_g(F') \rightarrow 0$ with $C_g(V)$ flat and $C_g(P)$
 Gorenstein flat, and with $C_g(F')^+$ Gorenstein injective. By Lemma 2,
 $C_g(F')$ is Gorenstein flat.

 Since $H_i(F') =0$ for all $i \ge
 g+1$ and each $F'_n$ is flat it follows that $C_i(F')$ is
 Gorenstein flat for all $i \ge g$.

\end{proof}

\begin{definition}
Let $R$ be a left closed GF-ring. Let $N$ be a complex of left
$R$-modules. The Gorenstein flat dimension of $N$ is defined by:\\
$Gfd N \le g$ if there is a DG-flat resolution $F \rightarrow N$
such that $sup H(F) \le g$ and $C_j(F)$ is Gorenstein flat for any
$j \ge g$. If $Gfd N \le g$ but $Gfd N \le {g-1}$ does not hold then
$Gfd N
=g$.\\
If $Gfd N \le g$ for any $g$ then $Gfd N = - \infty$.\\
If $Gfd N \le g$ does not hold for any $g$ then $Gfd N = \infty$.\\

\end{definition}

\begin{remark}
$Gfd N = - \infty$ if and only if $N$ is an exact complex.
\end{remark}

\begin{proof}
"$\Rightarrow$" If $Gfd N = - \infty$ then $H_i (N) =0$ for any
integer $i$.\\
"$\Leftarrow$" If $N$ is exact and $F \rightarrow N$ is a surjective
DG-flat resolution then $F$ is a flat complex. Then $H_j(F)=0$ and
$C_j(F)$ is flat hence Gorenstein flat for any integer j. So $Gfd N
= - \infty$.
\end{proof}

\begin{theorem}
Let $N$ be a complex of $R$-modules. The following are equivalent:\\
1) $Gfd N \le g$;\\
2) $sup H(N) \le g$ and $C_i (F)$ is Gorenstein flat for any $i \ge
g$,
for any DG-flat resolution $F \rightarrow N$;\\
3) For any DG-projective resolution $P \rightarrow N$ we have $sup
H(P) \le g$ and $C_j(P)$ is Gorenstein flat for any $j
\ge g$;\\
4) There exists a DG-projective resolution $P \rightarrow N$ such
that $H_j(P) = 0$ for any $j \ge g+1$, and $C_j(P)$ is Gorenstein
flat for any $j \ge g$.\\
\end{theorem}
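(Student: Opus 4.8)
The plan is to prove the cycle of implications $1)\Rightarrow 3)\Rightarrow 4)\Rightarrow 1)$ together with $1)\Leftrightarrow 2)$, reducing everything to the three lemmas already established. Recall that $Gfd N\le g$ means, by definition, that there exists \emph{some} DG-flat resolution $F\to N$ with $\sup H(F)\le g$ and $C_j(F)$ Gorenstein flat for $j\ge g$.

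\textbf{$1)\Rightarrow 2)$ and $1)\Rightarrow 3)$.} Assume $Gfd N\le g$, and fix the witnessing DG-flat resolution $F\to N$. Since every DG-projective resolution is in particular a DG-flat resolution, and since quasi-isomorphic complexes have the same homology, for any DG-flat resolution $F'\to N$ we get $\sup H(F')=\sup H(N)=\sup H(F)\le g$. That $C_i(F')$ is Gorenstein flat for all $i\ge g$ is then precisely the content of Lemma 4, so $2)$ holds; since a DG-projective resolution $P\to N$ is a special case, $3)$ holds as well (the statement $\sup H(P)\le g$ being the same homology computation, and $C_j(P)$ Gorenstein flat following either from Lemma 4 directly or, more precisely in the form stated, from Lemma 1).

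\textbf{$2)\Rightarrow 1)$, $3)\Rightarrow 4)$, $4)\Rightarrow 1)$.} For $2)\Rightarrow 1)$: every complex has a surjective DG-flat resolution $F\to N$ (as recalled in the preliminaries, since DG-projective resolutions exist and are DG-flat); such an $F$ witnesses $2)$, hence witnesses the definition of $Gfd N\le g$. For $3)\Rightarrow 4)$: every complex has a surjective DG-projective resolution $P\to N$ (\cite{enochs:96:orthogonality}, Corollary 3.10); by $3)$ we have $\sup H(P)\le g$, i.e. $H_j(P)=0$ for $j\ge g+1$, and $C_j(P)$ Gorenstein flat for $j\ge g$, which is exactly $4)$. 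For $4)\Rightarrow 1)$: a DG-projective resolution is a DG-flat resolution, the condition $H_j(P)=0$ for $j\ge g+1$ says $\sup H(P)\le g$, so the given $P\to N$ witnesses $Gfd N\le g$.

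\textbf{Main obstacle.} The substantive content is already absorbed into Lemma 4 (well-definedness of the condition across DG-flat resolutions) and Lemma 1 (passage to DG-projective resolutions); the theorem itself is essentially a repackaging, so the only point requiring care is bookkeeping: making sure that in each implication one is allowed to pass freely between ``there exists a resolution'' and ``for all resolutions,'' which is legitimate precisely because Lemma 4 guarantees the relevant properties are resolution-independent. A minor wrinkle is matching the homology-vanishing phrasings ($\sup H(N)\le g$ versus $\sup H(P)\le g$ versus $H_j(P)=0$ for $j\ge g+1$), all of which coincide since all resolutions are quasi-isomorphic to $N$; I would state this identification once at the start and then invoke it silently.
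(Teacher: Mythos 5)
Your proposal is correct and follows essentially the same route as the paper: the paper also proves $1)\Rightarrow 2)$ via Lemma 4, $1)\Rightarrow 3)$ via Lemma 1, and treats $2)\Rightarrow 1)$, $3)\Rightarrow 4)$, $4)\Rightarrow 1)$ as the same straightforward bookkeeping (existence of surjective DG-projective/DG-flat resolutions plus the fact that every DG-projective resolution is DG-flat). The only difference is presentational — you spell out the resolution-independence and the identification $\sup H(N)=\sup H(F)=\sup H(P)$ explicitly, which the paper leaves implicit.
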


\begin{proof}
1) $\Rightarrow$ 2) by Lemma 4;\\
2) $\Rightarrow$ 1) straightforward;\\
1) $\Rightarrow$ 3)  By Lemma 1; \\
3) $\Rightarrow$ 4) Straightforward;\\
4) $\Rightarrow$ 1) By definition, since every DG-projective
resolution is a DG-flat resolution.

\end{proof}

\textbf{Properties of dimensions}

We recall that the flat dimension of a complex $N$, $fd N$, is
defined (\cite{avramov:91:homological}) by $fd N \le g$ if $sup H(N)
\le g$ and for any DG-flat complex $F$, such that $F \simeq N$,
$C_j(F)$ is flat for all $j \ge g$ (where
$\simeq$ is the equivalence relation generated by quasi-isomorphisms).\\
If $fd N \le g$ does not hold for any $g \in Z$, then $fd N =
\infty$; if $fd N \le g$ holds for any $g \in Z$ then $fd N = -
\infty$.\\

\begin{proposition}
 Let $R$ be a left GF-closed ring. For any complex of $R$-modules $N$, we have $Gfd N \le fd N$ with
equality if $fd N < \infty$.
\end{proposition}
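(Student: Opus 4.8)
The plan is to get the inequality $Gfd N\le fd N$ straight from the definitions, and then promote it to an equality when $fd N<\infty$ by a syzygy-shifting argument that reduces everything to the module-level fact that a Gorenstein flat module of finite flat dimension is flat. For the inequality: if $fd N=\infty$ there is nothing to prove, and if $N$ is exact then $fd N=-\infty$ while $Gfd N=-\infty$ as well (Remark 2). So assume $fd N=n$ is a finite integer and fix any DG-flat resolution $F\to N$. Since $F\to N$ is a quasi-isomorphism we have $F\simeq N$, so the definition of flat dimension gives $sup H(F)=sup H(N)\le n$ and $C_j(F)$ flat for every $j\ge n$. Every flat module is Gorenstein flat, so this same resolution $F\to N$ witnesses $Gfd N\le n=fd N$.

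Now suppose $fd N<\infty$, hence $Gfd N\le fd N<\infty$ by the first part. If $N$ is exact both dimensions are $-\infty$; otherwise set $g=Gfd N$ and $n=fd N$, both integers with $g\le n$, and it suffices to prove $fd N\le g$. Choose a DG-flat resolution $F\to N$ with $sup H(F)\le g$ and $C_j(F)$ Gorenstein flat for all $j\ge g$. Since $H_i(F)=0$ for $i>g$, the sequence $\cdots\to F_{g+2}\to F_{g+1}\to F_g\to C_g(F)\to 0$ is exact, i.e. it is a flat resolution of the Gorenstein flat module $C:=C_g(F)$ whose $k$-th syzygy is $C_{g+k}(F)$. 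On the other hand $F\to N$ is a quasi-isomorphism, so $F\simeq N$, and the definition of $fd N\le n$ forces $C_n(F)$ to be flat; being the $(n-g)$-th syzygy of $C$ in the resolution above, this shows $fd\,C\le n-g<\infty$. Now, exactly as in the proof of Lemma 2, a Gorenstein flat module of finite flat dimension is flat: $C^{+}$ is Gorenstein injective by \cite{holm:04:gorenstein}, Theorem 3.6, it has finite injective dimension because $fd\,C<\infty$, hence it is injective by \cite{enochs:00:relative}, Proposition 10.1.2, and therefore $C=C_g(F)$ is flat by \cite{enochs:00:relative}, Theorem 3.2.10. A routine $Tor$ argument then propagates flatness to all the syzygies $C_j(F)$, $j\ge g$, and since $sup H(F)=sup H(N)\le g$ we conclude $fd N\le g$. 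Together with $g=Gfd N\le fd N$ this gives $fd N=Gfd N$.

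The formal ingredients (existence of DG-flat and DG-projective resolutions, and the identifications $C_{g+k}(F)\cong\Omega^{k}(C_g(F))$, valid precisely because $sup H(F)\le g$) are routine. The one real input is the reduction "Gorenstein flat $+$ finite flat dimension $\Rightarrow$ flat" at the module level, which is available via the $(-)^{+}$ trick already used in Lemma 2; the only point that needs care is to exhibit the relevant module as an honest syzygy $C_g(F)$ of a DG-flat resolution of $N$ and to read off its finite flat dimension correctly from the hypothesis $fd N<\infty$, and I expect that to be the main obstacle.
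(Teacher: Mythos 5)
Your proof is correct, and it rests on the same pivot as the paper's: everything comes down to the module-level fact that a Gorenstein flat module of finite flat dimension is flat, applied to a cokernel of a DG-flat resolution of $N$. The packaging differs slightly. The paper argues by contradiction: assuming $Gfd\,N\le g-1$ with $g=fd\,N$, it bounds the flat dimension of $C_{g-1}(F)$ by the single extension $0\rightarrow C_g(F)\rightarrow F_{g-1}\rightarrow C_{g-1}(F)\rightarrow 0$ (with $C_g(F)$ flat since $fd\,N=g$) and then cites \cite{enochs:00:relative}, Corollary 10.3.4. You argue directly: with $g=Gfd\,N\le n=fd\,N$ you identify $C_n(F)$ as the $(n-g)$-th syzygy of $C_g(F)$ in the exact complex $\cdots\rightarrow F_{g+1}\rightarrow F_g\rightarrow C_g(F)\rightarrow 0$, getting $fd\,C_g(F)\le n-g$, and then you reprove the key module fact by the character-module route of Lemma 2 (Gorenstein flat implies $C^+$ Gorenstein injective, finite flat dimension implies finite injective dimension of $C^+$, hence $C^+$ injective and $C$ flat); the upward propagation of flatness by the $Tor$ shift is indeed routine. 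What your variant buys is self-containedness: the crucial step is carried out with exactly the ingredients already used in Lemma 2, so it is visibly valid over any left GF-closed ring, rather than delegated to a citation. One small point of care: the recalled Avramov--Foxby definition of $fd\,N\le g$ quantifies over all DG-flat complexes quasi-isomorphic to $N$, whereas you fix one resolution; this is harmless because, by Lemma 4 (equivalently Theorem 1), every DG-flat resolution of $N$ has $\sup H\le g$ and Gorenstein flat cokernels in degrees $\ge g$, so your argument applies verbatim to each of them --- worth a sentence, but it is the same level of detail as the paper's own conclusion ``but then $fd\,N\le g-1$.''
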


\begin{proof}
- Clear if $fd N = \infty$\\
- If $fd N = - \infty$ then $N$ is exact, so $Gfd N = - \infty$\\
- Let $fd N =g < \infty$. Then for any DG-flat resolution $F
\rightarrow N$ we have $sup H(F) \le g$ and $C_j(F)$ is flat, hence
Gorenstein flat, for all $j \ge g$. By definition, $Gfd N \le
g$.\\
Suppose $Gfd N \le g-1$. Then for any DG-flat resolution $F
\rightarrow N$, $C_j(F)$ is Gorenstein flat for all $j \ge
g-1$, and $H_j(F) = 0$ for all $j \ge g$. \\
The exact sequence $0 \rightarrow C_g(F) \rightarrow F_{g-1}
\rightarrow C_{g-1}(F) \rightarrow 0$ with $C_g(F)$ and $F_{g-1}$
flat modules gives that $C_{g-1}(F)$ has finite flat dimension.
Since $C_{g-1}(F)$ is a Gorenstein flat module of finite
flat dimension it follows that $C_{g-1}(F)$ is flat (\cite{enochs:00:relative}, Corollary 10.3.4).\\
But then $fd N \le g-1$. Contradiction.\\
So $Gfd N = fd N$ if $fd N < \infty$.

\end{proof}

\begin{proposition}
Let $R$ be a left GF-closed ring. Let $M$ be an $R$-module. If
$\overline{M}$ is $M$ as a complex at zero then $Gfd \overline{M} =
Gfd_R M$.
\end{proposition}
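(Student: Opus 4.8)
The plan is to establish the two inequalities $Gfd \overline{M} \le Gfd_R M$ and $Gfd_R M \le Gfd \overline{M}$, with the understanding that the inequality holds automatically when the right-hand side is $\infty$, and that $-\infty$ precedes every integer. The one computational ingredient is the observation that a projective resolution $\cdots \to P_1 \to P_0 \to M \to 0$, regarded as the complex $P$ concentrated in degrees $\ge 0$, is a surjective DG-projective (hence DG-flat) resolution of $\overline{M}$ — it is a right-bounded complex of projectives, so it is DG-projective, and $P \to \overline{M}$ is a quasi-isomorphism — together with the identification of its cokernel modules with the syzygies of $M$: $C_0(P) = M$ and $C_j(P) \cong \Omega^j M$ for $j \ge 1$. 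We may assume $M \ne 0$, the case $M = 0$ being trivial (both sides are the dimension of a zero object). Then $Gfd_R M \ge 0$, and since any DG-flat resolution $F$ of $\overline{M}$ satisfies $sup H(F) = sup H(\overline{M}) = 0$, also $Gfd \overline{M} \ge 0$.

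For $Gfd \overline{M} \le Gfd_R M$ we may assume $Gfd_R M = n < \infty$, and we use the DG-flat resolution $P \to \overline{M}$ above. Since $R$ is left GF-closed and $Gfd_R M = n$, the $n$-th flat syzygy $C_n(P) \cong \Omega^n M$ is Gorenstein flat. Because the class of Gorenstein flat modules is projectively resolving over $R$ and each $P_j$ is flat (hence Gorenstein flat), running induction up the exact sequence $\cdots \to P_{n+1} \to P_n \to C_n(P) \to 0$, via the short exact sequences $0 \to C_{j+1}(P) \to P_j \to C_j(P) \to 0$, shows that $C_j(P)$ is Gorenstein flat for all $j \ge n$. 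As $sup H(P) = 0 \le n$, the definition of the Gorenstein flat dimension of a complex gives $Gfd \overline{M} \le n$.

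For $Gfd_R M \le Gfd \overline{M}$ we may assume $Gfd \overline{M} = g$ for some integer $g \ge 0$ (if it is $\infty$ there is nothing to prove, and it cannot be $-\infty$ since $M \ne 0$). By definition there is a DG-flat resolution $F \to \overline{M}$ with $sup H(F) \le g$ and $C_j(F)$ Gorenstein flat for all $j \ge g$, so Lemma 1 applies to the DG-projective resolution $P \to \overline{M}$ and yields that $C_j(P)$ is Gorenstein flat for all $j \ge g$. In particular $C_g(P) \cong \Omega^g M$ (resp.\ $C_0(P) = M$ when $g = 0$) is Gorenstein flat, so $0 \to \Omega^g M \to P_{g-1} \to \cdots \to P_0 \to M \to 0$ is a Gorenstein flat resolution of $M$ of length $g$, whence $Gfd_R M \le g$. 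Combining the two inequalities gives $Gfd_R M = Gfd \overline{M}$. I expect the only delicate points to be the bookkeeping of the $\pm\infty$ conventions and the identification $C_j(P) \cong \Omega^j M$; the substantive input is the already-recorded fact that over a left GF-closed ring $Gfd_R M \le n$ forces the $n$-th flat syzygy of $M$ to be Gorenstein flat, which powers both directions.
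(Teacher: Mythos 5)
Your proof is correct and follows essentially the same route as the paper: both arguments rest on the fact that over a left GF-closed ring the $n$-th flat syzygy of a module of Gorenstein flat dimension $n$ is Gorenstein flat (plus the projectively resolving property) for one inequality, and on the independence-of-resolution results (your Lemma 1; the paper uses the analogous statement for the chosen DG-flat resolution) followed by truncation for the other. The only cosmetic difference is that you work with a projective resolution of $M$ viewed as a DG-projective resolution of $\overline{M}$, whereas the paper uses a flat resolution viewed as a DG-flat resolution.
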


\begin{proof}

Let $ \ldots \rightarrow F_2 \rightarrow F_1 \rightarrow F_0
\rightarrow M \rightarrow 0$ be a flat resolution of $M$. Then $F
\rightarrow \overline{M} $ is a DG-flat resolution (where $F=\ldots
 \rightarrow F_1
\rightarrow F_0 \rightarrow 0$).\\
- Case $Gfd_R M = \infty$.\\
Suppose $Gfd \overline{M}=l < \infty$. Then $C_j(F)$ is Gorenstein
flat for any $j \ge l$. Since $0 \rightarrow C_l(F) \rightarrow
F_{l-1} \rightarrow \ldots \rightarrow F_0 \rightarrow M \rightarrow
0$ is exact with $C_l(F)$ Gorenstein flat and each $F_j$ flat, it
follows that $Gfd_R M \le l$. Contradiction. So $Gfd \overline{M} =
\infty$.
- Case $Gfd_R M =l < \infty$.\\
Then $C_l(F)$ is Gorenstein flat; since the ring is GF-closed,
$C_j(F)$ is Gorenstein flat for any $j \ge l$. Then $F \rightarrow
\overline{M}$ is a DG-flat resolution with $C_j (F)$ Gorenstein flat
for all $j \ge l$ and with
$H_j(F) = 0$ for any $j \ge 1$. By definition, $Gfd \overline{M} \le l$. \\
Suppose $Gfd \overline{M} \le l-1$. Then $C_{l-1}(F)$ is Gorenstein
flat. The exact sequence $ 0 \rightarrow C_{l-1} \rightarrow F_{l-2}
\rightarrow \ldots \rightarrow F_0 \rightarrow M \rightarrow 0$ with
$C_{l-1}$ Gorenstein flat and each $F_j$ flat gives that $Gfd_R (M)
\le l-1$. Contradiction. So $Gfd \overline{M} = l$.

\end{proof}

\begin{proposition}
Let $R$ be a left GF-closed ring and let $N$ be a complex of left $R$-modules.\\
a) If $Gfd N \le g$ then $Gid N^+ \le g$.\\
b) If $R$ is right coherent then $Gfd N \le g$
if and only if $Gid N^+ \le g$.\\
c) If $R$ is right coherent then $Gfd N = Gid N^+$.
\end{proposition}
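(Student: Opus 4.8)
The plan is to obtain part (a) directly from Lemma 3, to deduce part (b) from (a) together with a converse argument that genuinely uses right coherence, and to get part (c) from (b) by letting $g$ range over $\mathbb{Z}$. For (a): if $Gfd N \le g$ then, by the definition of Gorenstein flat dimension for complexes, $N$ has a DG-flat resolution $F \to N$ with $\sup H(F) \le g$ and with $C_j(F)$ Gorenstein flat for all $j \ge g$; this is precisely the hypothesis of Lemma 3, so $Gid N^+ \le g$, and nothing more is needed.

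For (b), the forward implication is (a), so I assume in addition that $R$ is right coherent and that $Gid N^+ \le g$, and I must produce a DG-flat resolution of $N$ witnessing $Gfd N \le g$. First I would fix a surjective DG-flat resolution $F \to N$. Since $F$ is DG-flat, $F^+$ is DG-injective (Remark 1), and since $F \to N$ is a surjective quasi-isomorphism while $(-)^+$ is exact and faithful, $N^+ \to F^+$ is an injective DG-injective resolution of $N^+$. Moreover $Gid N^+ \le g$ forces $\inf H(N^+) \ge -g$ (any complete coresolution $N^+ \to F^+ \to T$ whose comparison map is bijective in all degrees $\le -g$ makes $H_i(N^+) = 0$ for $i \le -g-1$, since there $F^+$ agrees with the exact complex $T$), hence $\sup H(N) \le g$ and so $\sup H(F) = \sup H(N) \le g$. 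Then, arguing exactly as in the proof of Lemma 4, I would apply \cite{asadollahi:06:gicomplexes}, Theorem 2.3, to the DG-injective resolution $N^+ \to F^+$: since $Gid N^+ \le g$, this yields an injective complete coresolution $N^+ \to F^+ \to T$ whose comparison map is bijective in all degrees $\le -g$, and then the exact sequence $F_{g+1} \to F_g \to C_g(F) \to 0$ identifies $C_g(F)^+$ with $\ker(F_g^+ \to F_{g+1}^+)$, which is therefore Gorenstein injective.

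Here is where right coherence enters: by the right-coherent half of \cite{holm:04:gorenstein}, Theorem 3.6 --- a left $R$-module $M$ is Gorenstein flat if and only if $M^+$ is Gorenstein injective --- it follows that $C_g(F)$ is Gorenstein flat. Since $\sup H(F) \le g$, the complex $\cdots \to F_{g+1} \to F_g \to C_g(F) \to 0$ is exact with each $F_j$ flat and $C_g(F)$ Gorenstein flat, so because $R$ is left GF-closed (right coherent rings are) it follows, just as at the end of the proof of Lemma 1, that $C_j(F)$ is Gorenstein flat for every $j \ge g$; hence $Gfd N \le g$. This proves (b). For (c), part (b) shows that the sets $\{g \in \mathbb{Z} : Gfd N \le g\}$ and $\{g \in \mathbb{Z} : Gid N^+ \le g\}$ coincide; both are upward closed in $\mathbb{Z}$, so this common set is empty, all of $\mathbb{Z}$, or $\{g : g \ge m\}$ for a unique $m$, and in these three cases $Gfd N$ and $Gid N^+$ are both $+\infty$, both $-\infty$, or both $m$. (The middle case also follows from Remark 2, since $N^+$ is exact iff $N$ is.) So $Gfd N = Gid N^+$.

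The step I expect to be the main obstacle is the converse in (b), and in particular the single place where right coherence is really used: passing from ``$C_g(F)^+$ Gorenstein injective'' back to ``$C_g(F)$ Gorenstein flat'' by means of the full biconditional in \cite{holm:04:gorenstein}, Theorem 3.6. Over a ring that is only left GF-closed just the ``only if'' direction of that biconditional is available, which is exactly why the general-case Lemma 4 could not take this shortcut and instead had to route through Lemmas 1, 2 and 3. Everything else --- checking that $N^+ \to F^+$ is a genuine injective DG-injective resolution, the index bookkeeping matching the cokernels $C_j(F)$ with the cocycles of $F^+$ and with the range $i \le -g$ in the complete coresolution, and the propagation from $C_g(F)$ to all the $C_j(F)$ --- is routine and is carried out just as in the proofs of Lemmas 1 and 4.
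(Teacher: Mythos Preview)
Your proof is correct and follows essentially the same approach as the paper: part (a) is Lemma 3; part (b) dualizes a DG-flat resolution $F \to N$ to a DG-injective resolution $N^+ \to F^+$, uses the Asadollahi--Salarian characterization of $Gid$ to see that $C_g(F)^+ \cong \ker(F_g^+ \to F_{g+1}^+)$ is Gorenstein injective, invokes the right-coherent direction of Holm's Theorem 3.6 to conclude $C_g(F)$ is Gorenstein flat, and then propagates to all $C_j(F)$ for $j \ge g$. The only cosmetic difference is in part (c), where the paper does a three-case analysis ($-\infty$, finite, $+\infty$) while you phrase it as equality of the upward-closed sets $\{g : Gfd\,N \le g\}$ and $\{g : Gid\,N^+ \le g\}$; both arguments are immediate from (b).
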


\begin{proof}


 a) By Lemma 3.

b) Let $R$ be right coherent. Let $N$ be a complex of left
$R$-modules with $Gid N^+ \le g$.

 If $F \rightarrow N$ is a DG-flat
resolution
then $N^+ \rightarrow F^+$ is a DG-injective resolution. 
Since $Gid N^+ \le g$ it follows that $Z_j(F^+)$ is a Gorenstein
injective
module for any $j \le -g$, that is $ker {f_j}^+$ is Gorenstein injective for all $j \ge g+1$, and $H_j(F^+)=0$ for all $j \le -g-1$ .\\
 The sequence $F_{g+1} \xrightarrow {f_{g+1}} F_g \xrightarrow{\pi}
 C_g(F)
\rightarrow 0$ is exact, 
therefore\\  $0 \rightarrow {C_g(F)^+} \xrightarrow{\pi^+} {F_g^+}
\xrightarrow{f_{g+1}^+} {F_{g+1} ^+}$ is exact. Then $C_g(F)^+
\simeq
 Ker {f_{g+1}^+}$ is Gorenstein injective.
Since $R$ is right coherent it follows that $C_g(F)$ is Gorenstein flat (\cite{holm:04:gorenstein}, Theorem 3.6).\\
We have $H_j(F^+) =0$ for any $j \le -g-1$. Then ${H_j(F)^+} \simeq
H_j(F^+)=0$, so $H_j(F)=0$,
for any $j \ge g+1$.\\
 Since $ \ldots \rightarrow F_{g+1}
\rightarrow F_{g+1} \rightarrow F_g \rightarrow C_g(F) \rightarrow
0$ is exact, each $F_j$ is flat, $C_g(F)$ is Gorenstein flat and the
class of Gorenstein flat modules is closed under kernels of
epimorphisms, it follows that $C_j (F)$ is Gorenstein flat for all
$j \ge g$. Thus, $Gfd N \le g$.

c) 
 If $Gfd N = - \infty$ then $N$ is exact and therefore $N^+$ is
exact, so $Gid N^+ = - \infty$;\\
 -If $Gfd N = g < \infty$ then by the above $Gid N^+ \le g$.
 Suppose $Gid N^+ \le g-1$. Since $R$ is right coherent it follows
 that $Gfd N \le g-1$. Contradiction.
 So $Gid N^+ = g$.\\
 -Case $Gfd N = \infty$\\
 Suppose that $Gid N^+ =g < \infty$. Then by b), $Gfd N \le g < \infty$. Contradiction.


\end{proof}

\begin{proposition}
Let $R$ be a left GF-closed ring. Let $0 \rightarrow N' \rightarrow
N \rightarrow N" \rightarrow 0$ be an exact sequence of complexes of
$R$-modules. If two complexes have finite Gorenstein flat dimension
then so does the third.

\end{proposition}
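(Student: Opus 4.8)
The plan is to transport the statement to the level of DG-projective resolutions. By Theorem 1, for a complex $X$ the condition $Gfd\,X\le g$ is equivalent to $\sup H(X)\le g$ together with Gorenstein flatness of the cokernel modules $C_j(P)$ for $j\ge g$, for some (equivalently, any) DG-projective resolution $P\to X$; and the engine will be that the class of Gorenstein flat modules is projectively resolving (Bennis), in particular closed under extensions and under kernels of epimorphisms. First I would produce, by a horseshoe argument in the category of complexes, an exact sequence $0\to P'\to P\to P''\to 0$ of DG-projective complexes together with surjective DG-projective resolutions $P'\to N'$, $P\to N$, $P''\to N''$ compatible with $0\to N'\to N\to N''\to 0$. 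Concretely: resolve $N''$ by a surjective DG-projective resolution $P''\to N''$, form the pullback $\widetilde N=N\times_{N''}P''$ (so $\widetilde N\to N$ is a surjective quasi-isomorphism and $\ker(\widetilde N\to P'')=N'$), pick a surjective DG-projective resolution $P\to\widetilde N$, and take for $P'$ the preimage in $P$ of $N'$; since the DG-projective complexes form a projectively resolving class, $P'$ is DG-projective, and $P'\to N'$ is a surjective quasi-isomorphism.

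Next I would extract the module-level short exact sequences. Applying the snake lemma to the exact rows $0\to P'_{j+1}\to P_{j+1}\to P''_{j+1}\to 0$ and $0\to P'_j\to P_j\to P''_j\to 0$ with the differentials $\partial_{j+1}$ as vertical maps gives an exact sequence
\[
0\to Z_{j+1}(P')\to Z_{j+1}(P)\to Z_{j+1}(P'')\xrightarrow{\ \delta\ }C_j(P')\to C_j(P)\to C_j(P'')\to 0,
\]
and a short diagram chase shows that the image of $\delta$ lies in $H_j(P')=H_j(N')$; hence
\[
0\to C_j(P')\to C_j(P)\to C_j(P'')\to 0
\]
is exact for every $j>\sup H(N')$. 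The long exact homology sequence of $0\to N'\to N\to N''\to 0$ shows moreover that if two of $\sup H(N')$, $\sup H(N)$, $\sup H(N'')$ are finite then so is the third, so in each case below all three complexes are homologically bounded above, and one may choose an integer $g$ strictly larger than the two finite Gorenstein flat dimensions in question and than all three (now finite) suprema $\sup H(-)$.

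For such a $g$ and every $j\ge g$, the displayed sequence $0\to C_j(P')\to C_j(P)\to C_j(P'')\to 0$ is exact, and by Theorem 1 the two $C_j$'s coming from the complexes of finite Gorenstein flat dimension are Gorenstein flat. If these are $C_j(P')$ and $C_j(P'')$, then $C_j(P)$ is Gorenstein flat by closure under extensions, so $Gfd\,N\le g$. If these are $C_j(P)$ and $C_j(P'')$, then $C_j(P')$ is Gorenstein flat by closure under kernels of epimorphisms, so $Gfd\,N'\le g$.

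The case in which $C_j(P)$ and $C_j(P')$ are the Gorenstein flat ones is where I expect the real work, since the cokernel of a monomorphism of Gorenstein flat modules need not be Gorenstein flat: here $0\to C_g(P')\to C_g(P)\to C_g(P'')\to 0$ only yields $Gfd_R\,C_g(P'')\le 1$. To gain one more degree I would feed this into the exact sequence $0\to C_{g+1}(P'')\to P''_g\to C_g(P'')\to 0$ (exact because $g>\sup H(N'')$), in which $P''_g$ is flat, and invoke the fact recalled in the preliminaries that over a left GF-closed ring an $n$-flat syzygy of a module of Gorenstein flat dimension $n$ is Gorenstein flat; this forces $C_{g+1}(P'')$ to be Gorenstein flat, hence $C_j(P'')$ is Gorenstein flat for all $j\ge g+1$ by descending through the flat modules $P''_j$ using closure under kernels of epimorphisms, so $Gfd\,N''\le g+1$. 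Finally I would dispose of the degenerate situations where one of the three dimensions is $-\infty$: by Remark 1 the corresponding complex is exact, so the long exact homology sequence reduces the claim to a quasi-isomorphism of complexes, for which it is immediate.
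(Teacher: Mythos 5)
Your argument is correct and follows essentially the same route as the paper: it produces a short exact sequence of DG-projective resolutions over the given sequence (the paper cites Veliche, Proposition 1.3.8, where you rebuild it by a pullback/horseshoe argument), passes to the exact sequences $0\to C_j(P')\to C_j(P)\to C_j(P'')\to 0$ in degrees above the homology, and concludes via the projectively resolving property of Gorenstein flat modules, with the third case handled through $Gfd_R\,C_g(P'')\le 1$ and a flat-syzygy step. You merely make explicit some points the paper leaves implicit (injectivity of $C_j(P')\to C_j(P)$ when $H_j(P')=0$, the choice of $g$, and the passage from $Gfd_R\,C_g(P'')\le 1$ to $Gfd\,N''\le g+1$), so no substantive difference.
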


\begin{proof}
By \cite{veliche:04:gorenstein}, Proposition 1.3.8, there is an
exact sequence of complexes $0 \rightarrow F' \rightarrow F
\rightarrow F" \rightarrow 0$ with $F' \rightarrow N'$, $F
\rightarrow N$, and $F" \rightarrow N"$ DG-projective resolutions.
If two of the complexes $N'$, $N$, $N"$ have finite Gorenstein flat
dimension then there is $g \in Z$ such that $H_j(F')= H_j(F) =
H_j(F") = 0$ for all $j \ge g$.\\
 For each $j \ge g$ we have an
exact sequence $0 \rightarrow C_j(F') \rightarrow C_j(F) \rightarrow
C_j(F") \rightarrow 0$. If $C_j(F")$ is Gorenstein flat, then
$C_j(F')$ is Gorenstein flat if and only if $C_j(F)$ is Gorenstein
flat. If both $C_j(F')$ and $C_j(F)$ are Gorenstein flat then $Gfd_R
C_j(F") \le 1$.

\end{proof}

We recall that the finitistic projective dimension of a ring $R$,
$FPD(R)$, is defined as  $FPD(R) = sup \{pd M: pd M < \infty \}$.

We also recall \cite{holm:04:gorenstein}, Proposition 3.4:
\begin{proposition}
If $R$ is right coherent with finite left finitistic projective
dimension, then every Gorenstein projective (left) $R$-module is
also Gorenstein flat.
\end{proposition}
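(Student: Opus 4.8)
The plan is to show that the complete projective resolution exhibiting a module as Gorenstein projective already exhibits it as Gorenstein flat, once the two hypotheses are used to understand what happens on tensoring with an injective right module.

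So let $M$ be a Gorenstein projective left $R$-module, and fix an exact complex of projective modules $P=\cdots\rightarrow P_1\rightarrow P_0\rightarrow P_{-1}\rightarrow\cdots$ which is $Hom(-,Proj)$-exact, with $M=\Ker(P_0\rightarrow P_{-1})$. Every $P_n$ is flat, so $P$ is already an exact complex of flat modules having $M$ among its cycle modules; by the definition of a Gorenstein flat module it is therefore enough to prove that $P$ is $Inj\otimes-$ exact, i.e. that $I\otimes_R P$ is an exact complex for every injective right $R$-module $I$. By the hom-tensor adjunction there are isomorphisms $(I\otimes_R P_n)^+\cong Hom_R(P_n,I^+)$, natural in $n$, where $I^+$ is regarded as a left $R$-module via the right action on $I$; these assemble into an isomorphism of complexes $(I\otimes_R P)^+\cong Hom_R(P,I^+)$. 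Since $(-)^+=Hom_{\mathbb Z}(-,\mathbb Q/\mathbb Z)$ is exact and a module is zero iff its character module is zero, $I\otimes_R P$ is exact if and only if $Hom_R(P,I^+)$ is exact. Thus the task reduces to showing that $P$ is $Hom_R(-,I^+)$-exact.

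Here the hypotheses enter, at two separate points. First, because $R$ is right coherent, the character module $I^+$ of the injective right $R$-module $I$ is a flat left $R$-module. Second, because $R$ has finite left finitistic projective dimension, this flat module $I^+$ has finite projective dimension. Granting these two facts, it remains to prove the elementary statement: if $Q$ is a left $R$-module with $pd_RQ<\infty$, then $Hom_R(P,Q)$ is exact. This goes by induction on $pd_RQ$. If $pd_RQ=0$, then $Q$ is projective and the exactness of $Hom_R(P,Q)$ is exactly the assumption that $P$ is $Hom(-,Proj)$-exact. If $pd_RQ=n\ge1$, choose a short exact sequence $0\rightarrow K\rightarrow L\rightarrow Q\rightarrow0$ with $L$ projective and $pd_RK=n-1$; since every $P_i$ is projective, $Hom_R(P_i,-)$ leaves this sequence exact, so we get a short exact sequence of complexes $0\rightarrow Hom_R(P,K)\rightarrow Hom_R(P,L)\rightarrow Hom_R(P,Q)\rightarrow0$ whose first two terms are exact complexes, by the inductive hypothesis and the base case. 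The long exact homology sequence then forces $Hom_R(P,Q)$ to be exact. Applying this with $Q=I^+$ completes the argument.

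The induction and the adjunction bookkeeping are routine; the real content, and the main obstacle, lies in the two facts opening the previous paragraph. That $I^+$ is flat over a right coherent ring is the classical character-module criterion for right coherence: one writes an arbitrary right $R$-module as a direct limit $\varinjlim X_\lambda$ of finitely presented right modules, uses the natural isomorphism $X_\lambda\otimes_R I^+\cong Hom_R(X_\lambda,I)^+$ available for finitely presented $X_\lambda$ together with the fact that over a right coherent ring a short exact sequence of right modules with two finitely presented terms has its third term finitely presented as well, to get $Tor_1^R(X_\lambda,I^+)=0$ for all $\lambda$, and then passes to the limit. That a flat left $R$-module has finite projective dimension over a right coherent ring of finite left finitistic projective dimension is the known bound $pd_RF\le FPD(R)$ for flat $F$ under these hypotheses; this is the only genuinely nonformal ingredient, and it is precisely where the finiteness of the finitistic dimension is used.
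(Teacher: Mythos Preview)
Your proof is correct and is essentially the standard argument due to Holm. Note, however, that the paper does not prove this proposition at all: it is stated there merely as a quotation of \cite{holm:04:gorenstein}, Proposition~3.4, with no proof given. Your write-up faithfully reconstructs Holm's original proof: show that the complete projective resolution $P$ remains exact under $I\otimes_R-$ by dualizing to $\mathcal{H}om(P,I^+)$, use right coherence to get $I^+$ flat, use $FPD(R)<\infty$ to get $pd_R I^+<\infty$, and finish with the induction on projective dimension showing that $\mathcal{H}om(P,-)$ is exact on modules of finite projective dimension. One minor remark: the bound $pd_R F\le FPD(R)$ for flat $F$ is a theorem of Jensen and does not actually require right coherence, so your phrasing ``over a right coherent ring of finite left finitistic projective dimension'' slightly understates the generality of that step; the coherence hypothesis is needed only to ensure $I^+$ is flat.
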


\begin{proposition}
 Let $R$ be right coherent of finite left finitistic projective dimension. For any complex of left $R$-modules $N$, $Gfd
N \le Gpd N$.
\end{proposition}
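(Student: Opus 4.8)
The plan is to produce, from a complete resolution realizing $Gpd N$, a DG-flat resolution of $N$ that witnesses $Gfd N \le Gpd N$. If $Gpd N = \infty$ there is nothing to prove, so I would fix an arbitrary integer $g$ with $Gpd N \le g$ and show $Gfd N \le g$; since this then holds for every such $g$, it gives $Gfd N \le Gpd N$, including the degenerate case $Gpd N = -\infty$, where one obtains $Gfd N \le g$ for all $g$ and hence $Gfd N = -\infty$. Note first that a right coherent ring is left GF-closed, so the author's notion of $Gfd$ of a complex applies. By Veliche's definition of $Gpd$, the inequality $Gpd N \le g$ gives a complete resolution $T \xrightarrow{u} P \to N$ with $P \to N$ a DG-projective resolution, $T$ a $Hom(-,Proj)$-exact exact complex of projective modules, and $u_i$ bijective for all $i \ge g$.

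The next step is to read off the behaviour of $P$ in degrees $\ge g$. Since $u$ is a chain map and $u_i,\,u_{i-1}$ are isomorphisms for $i \ge g+1$, the differential $\partial^P_i$ equals $u_{i-1}\,\partial^T_i\,u_i^{-1}$ there; consequently $H_i(P) \cong H_i(T) = 0$ for $i \ge g+1$, so $\sup H(N) = \sup H(P) \le g$, and for every $j \ge g$ the module $C_j(P) = \mathrm{coker}(P_{j+1}\to P_j)$ is isomorphic to $C_j(T) = \mathrm{Im}(T_j \to T_{j-1}) = Z_{j-1}(T)$. The latter is a syzygy of the totally acyclic complex $T$ of projective modules, hence a Gorenstein projective module. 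Thus $C_j(P)$ is Gorenstein projective for all $j \ge g$.

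Finally I would invoke the preceding proposition (\cite{holm:04:gorenstein}, Proposition 3.4): since $R$ is right coherent with finite left finitistic projective dimension, every Gorenstein projective left $R$-module is Gorenstein flat, so each $C_j(P)$ with $j \ge g$ is Gorenstein flat. As every DG-projective complex is DG-flat, $P \to N$ is a DG-flat resolution with $\sup H(P) \le g$ and $C_j(P)$ Gorenstein flat for all $j \ge g$, which is precisely the condition $Gfd N \le g$.

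The one step that I expect needs care is the bookkeeping with the complete resolution: justifying that $u_i$ bijective for $i \ge g$ forces $H_i(P) = 0$ for $i > g$ and $C_j(P) \cong C_j(T)$ for $j \ge g$, together with the identification of $C_j(T)$ with the syzygy $Z_{j-1}(T)$ of $T$ so that its Gorenstein projectivity is available. Beyond that, the argument is a direct combination of Veliche's definition, the characterization of DG-flat complexes, and the cited proposition of Holm, so I do not anticipate a genuine obstacle.
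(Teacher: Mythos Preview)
Your proof is correct and follows essentially the same route as the paper's: take a DG-projective resolution $P\to N$, observe that the cokernels $C_j(P)$ for $j\ge g$ are Gorenstein projective, apply Holm's result (Proposition~5 in the paper) to get Gorenstein flatness, and conclude via the definition of $Gfd$. The only difference is cosmetic: the paper simply quotes the characterization of $Gpd$ (Veliche, Theorem~3.4) to obtain $\sup H(P)\le g$ and $C_j(P)$ Gorenstein projective directly, whereas you unpack this from the complete resolution $T\xrightarrow{u}P\to N$ by hand---your bookkeeping there is fine.
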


\begin{proof}
- Obvious if $Gpd N = \infty$;\\
- If $Gpd N = g < \infty$ then for any DG-projective resolution $P
\rightarrow N$,  $C_j (P)$ is Gorenstein projective for all $j \ge
g$ and sup $H(P) \le g$.  By Proposition 5, $C_j(P)$ is
Gorenstein flat. By Theorem 1, $Gfd N \le g$.\\
- If $Gpd N = - \infty$ then $N$ is exact, so $Gfd N = - \infty$.
\end{proof}

\begin{proposition}
Let $R$ be a left noetherian ring of finite Krull dimension and let
$N$ be a complex of $R$-modules. If $Gfd N < \infty$ then $Gpd N <
\infty$.
\end{proposition}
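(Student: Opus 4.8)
The plan is to reduce the statement about complexes to the corresponding fact about modules, namely that over a left noetherian ring of finite Krull dimension a Gorenstein flat module has finite Gorenstein projective dimension (indeed every Gorenstein flat module over such a ring is Gorenstein projective, since a noetherian ring of finite Krull dimension has finite finitistic projective dimension, so the two classes coincide by the dual of Proposition 5 together with Enochs--Jenda's results). First I would take a DG-projective resolution $P \rightarrow N$; by Theorem 1, $Gfd N \le g$ for some $g \in Z$ means $sup H(P) \le g$ and $C_j(P)$ is Gorenstein flat for all $j \ge g$. In particular $C_g(P)$ is a Gorenstein flat module.

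Next I would invoke the module-level result: since $R$ is left noetherian of finite Krull dimension, $C_g(P)$ has finite Gorenstein projective dimension, say $Gpd_R C_g(P) = d$. The standard way to see this is that $FPD(R) < \infty$ for such rings (Auslander--Buchsbaum / Raynaud--Gruson type bound: $FPD(R) \le \operatorname{Kdim} R$), hence by (the left-module analogue of) Proposition 5 every Gorenstein flat $R$-module is Gorenstein projective, so in fact $d \le 0$ and $C_g(P)$ is itself Gorenstein projective. Even allowing the weaker conclusion $d < \infty$ suffices for what follows.

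Then I would assemble a DG-projective resolution witnessing $Gpd N < \infty$. Since $P$ is a DG-projective resolution with $C_j(P)$ Gorenstein projective for $j \ge g$ (by the previous paragraph, as the GF-closed/noetherian hypotheses give that all the higher flat syzygies $C_j(P)$, $j \ge g$, are Gorenstein flat hence Gorenstein projective), and $sup H(P) \le g$, the complex $P$ already satisfies the defining condition of $Gpd N \le g$ in Veliche's sense: one checks that the truncation of $P$ together with a complete resolution built from the Gorenstein projective module $C_g(P)$ gives a complete resolution $T \xrightarrow{u} P \rightarrow N$ with $u_i$ bijective for $i \ge g+1$. Concretely, splice a complete projective resolution of $C_g(P)$ (which exists because $C_g(P)$ is Gorenstein projective) onto the brutal truncation $\cdots \to P_{g+1} \to P_g \to 0$ and map it to $P$; this realizes $Gpd N \le g$, so $Gpd N < \infty$.

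The main obstacle is the module-level input: showing that a Gorenstein flat $R$-module over a left noetherian ring of finite Krull dimension has finite Gorenstein projective dimension. This is where the Krull-dimension hypothesis is essential — it is used to bound the finitistic projective dimension of $R$, after which Proposition 5 (in its left-module form) or the direct argument of Enochs--Jenda applies. If one prefers to avoid quoting $FPD(R) \le \operatorname{Kdim} R$, an alternative is to build, directly from a flat resolution of $C_g(P)$ and the fact that flat modules over a noetherian ring of finite Krull dimension have finite projective dimension, a finite projective resolution of some flat syzygy of $C_g(P)$, and then use that the class of Gorenstein flat modules is projectively resolving to conclude; but the cleanest route is via the finitistic projective dimension bound and Proposition 5.
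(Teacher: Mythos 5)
Your overall strategy --- pass to a DG-projective resolution $P \rightarrow N$, reduce to the Gorenstein flat module $C_g(P)$, get finiteness of its Gorenstein projective dimension from the noetherian/finite Krull dimension hypothesis, and then return to $N$ via Veliche's characterization --- is exactly the paper's route, but the module-level step is where your argument has a genuine gap. You justify it by claiming that over such a ring the classes of Gorenstein flat and Gorenstein projective modules coincide, citing Proposition 5; but Proposition 5 (Holm) only gives the implication Gorenstein projective $\Rightarrow$ Gorenstein flat when $FPD(R) < \infty$, and the converse is false. For $R = \mathbb{Z}$ (noetherian of Krull dimension $1$) the module $\mathbb{Q}$ is flat, hence Gorenstein flat, yet it is not Gorenstein projective (a Gorenstein projective module of finite projective dimension is projective). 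Consequently your stated conclusion $Gpd\, N \le g$ is also false in general: for $N = \mathbb{Q}$ viewed as a complex at zero, $Gfd\, N = 0$ while $Gpd\, N = 1$. The fact you actually need is the nontrivial theorem the paper invokes (\cite{christensen:08:beyound}, Theorem 29): over a left noetherian ring of finite Krull dimension every Gorenstein flat module has \emph{finite} Gorenstein projective dimension. Your fallback remark that ``$d < \infty$ suffices'' points in the right direction, but neither of your proposed justifications proves it: deducing it from $FPD(R) < \infty$ via Proposition 5 runs the implication backwards, and the sketched alternative (flat modules have finite projective dimension, so some flat syzygy has a finite projective resolution) does not produce a finite Gorenstein projective resolution of $C_g(P)$; finiteness of $Gfd$ together with finite projective dimension of flat modules does not formally yield finite $Gpd$ without an argument of the Christensen--Foxby--Holm type.

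Once that module-level input is granted, say $Gpd_R\, C_g(P) = l < \infty$, the conclusion is reached as in the paper: since $H_j(P) = 0$ for $j \ge g+1$, the module $C_{g+l}(P)$ is an $l$-th syzygy of $C_g(P)$ by projective modules, hence Gorenstein projective, and Veliche's Theorem 3.4 (equivalently, your splicing of a complete projective resolution onto the truncation of $P$, carried out at degree $g+l$ rather than at $g$) gives $Gpd\, N \le g + l < \infty$. So your final assembly step is sound in spirit, but the bound must be $g+l$, not $g$, and the heart of the proof is precisely the module-level theorem you did not establish.
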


\begin{proof}
Let $Gfd N = g < \infty$. If $P \rightarrow N$ is a DG-projective
resolution, then $C_g(P)$ is Gorenstein flat and $H_j(P) = 0$ for
all $j \ge g+1$. By \cite{christensen:08:beyound}, Theorem 29, $Gpd
C_g(P) = l < \infty$. Then $C_{g+l} (P)$ is Gorenstein projective.
By \cite{veliche:04:gorenstein}, Theorem 3.4, $Gpd N \le g+l$.
\end{proof}

\vspace{5mm}

 For homologically bounded complexes Christensen, Frankild and
Holm
defined the Gorenstein flat dimension by \\

\begin{definition} (\cite{christensen:06:ongorenstein}, 2.7)
Let $X$ be a homologically right-bounded complex. The Gorenstein flat dimension of $X$, $Gfd_R X$, is\\
$Gfd_R X = inf \{sup$ \{$l \in Z, A_l \neq 0\}$ ,
 $A$ a right bounded complex of
Gorenstein flat modules that is isomorphic to $X$ in
$D(R)$\}\\(where $D(R)$ is the derived category) 
\end{definition}



\vspace{7mm}

\begin{remark}
Let $R$ be a left GF-closed ring, and let $X$ be a homologically
right-bounded complex. Then $Gfd_R X = Gfd X$.

\end{remark}

 \begin{proof}
 We can assume
 $inf H(X) =0$.\\
 By \cite{veliche:04:gorenstein}, 1.3.4, $X$ has a
 DG-projective resolution $F \rightarrow X$ with $inf F = 0$.\\
 If $Gfd X
\le g$ then $C_j(F)$ is Gorenstein flat for all $j \ge g$.\\
Let $\overline{F} = 0 \rightarrow C_g(F)  \rightarrow F_{g-1}
\rightarrow \ldots \rightarrow F_0 \rightarrow 0 $ and let $X' = 0
\rightarrow C_g(X) \rightarrow X_{g-1} \rightarrow X_{g-2}
\rightarrow \ldots $. The quasi-isomorphism $F \rightarrow X$ gives
a quasi-isomorphism $\overline{F} \rightarrow X'$. Since
$\overline{F} \simeq X'$ and $X \simeq X'$ in $D(R)$, it follows
that $\overline{F} \simeq X$ in $D(R)$. Each component of
$\overline{F}$
is a Gorenstein flat module, so $Gfd_R X \le g$.\\


Let $X$ be a homologically right bounded complex (with $inf
H(X)=0$), such that $Gfd_R X \le
g$ where $g < \infty$. 
We show that $Gfd X \le g$.\\ 
Since $Gfd_R X \le g$ there exists
a complex $A= 0 \rightarrow A_g \rightarrow A_{g-1} \rightarrow
\ldots \rightarrow A_0 \rightarrow
0$ of Gorenstein flat modules such that $A \simeq X$.\\ 
 $X$ is right bounded, so $X$ has a DG-projective resolution $P
\rightarrow X$ with $P$ right bounded.\\
Since $P \simeq X \simeq A$ and $P$ is DG-projective it follows that
there is a quasi-isomorphism $P \rightarrow A$ (\cite{avramov:91:homological}, 1.4.P).\\
$A$ is right bounded, so there is a surjective map $F \rightarrow A$
with $F$ a right bounded projective complex. Then $P \oplus F
\rightarrow A$ is a surjective quasi-isomorphism, so we have an
exact sequence $0 \rightarrow V \rightarrow P \oplus F \rightarrow A
\rightarrow 0$ with $V$ exact.
 Since $P_j \oplus F_j$ and $A_j$ are
Gorenstein flat modules for all $j$, it follows that each $V_j$ is
Gorenstein flat. We have $V \subseteq P \oplus F$, so $V$ is a right
bounded complex of Gorenstein flat
modules; then $C_j(V)$ is Gorenstein flat for each j.\\
The exact sequence $0 \rightarrow C_g(V) \rightarrow C_g(P) \oplus
C_g(F) \rightarrow C_g(A) \rightarrow 0$ with both $C_g(A)=A_g$ and
$C_g(V)$ Gorenstein flat gives that $C_g(P) \oplus C_g(F)$ is
Gorenstein flat. By \cite{bennis:08:rings}, Corollary 2.6, the class
of Gorenstein flat modules over a left GF-closed ring is closed
under summands, so $C_g(P)$ is Gorenstein flat. Since $P \rightarrow
X$ is a DG-projective resolution with $sup H(P) \le g$ and $C_j(P)$
Gorenstein flat
for $j \ge g$ it follows that $Gfd X \le g$. \\
 By the above, $Gfd_R X = Gfd X$ for $Gfd_R X < \infty$. Also
by the
above, $Gfd_R X = \infty$ if and only if $ Gfd X =\infty$.\\
We have $Gfd_R X = - \infty$ if and only if $X$ is exact if and only
if $Gfd X = - \infty$.
\end{proof}

We recall (\cite{enochs:00:relative}, Definition 9.1.1) that a ring
is Gorenstein if it is left and right
noetherian and has finite self injective dimension on both sides.\\
A Gorenstein ring with $id _R R$ at most n is called n-Gorenstein.
In this case $id R_R$ is also at most n (\cite{enochs:00:relative},
Proposition 9.1.8)

Gorenstein rings can be characterized in terms of Gorenstein flat
dimensions of complexes of their modules.

\begin{theorem}
Let $R$ be a left and right noetherian ring. The following are
equivalent:\\
a) $R$ is n-Gorenstein;\\
b) For every complex of $R$-modules $N$, $Gfd N \le n + sup H(N)$.
\end{theorem}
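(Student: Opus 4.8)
The plan is to prove the two implications separately. The real work is in $(a)\Rightarrow(b)$; the converse reduces quickly to a known characterisation of Gorenstein rings.

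For $(a)\Rightarrow(b)$, assume $R$ is $n$-Gorenstein. First I would record two facts. Since $R$ is two-sided noetherian it is right coherent, so by \cite{holm:04:gorenstein} the class of Gorenstein flat left $R$-modules is projectively resolving; thus $R$ is left GF-closed, the notion $Gfd$ is defined, and Theorem 1 is available. Moreover, over an $n$-Gorenstein ring every left $R$-module $M$ satisfies $Gfd_R M\le n$ --- this may be cited directly from \cite{enochs:00:relative}, or deduced from the classical bound $Gpd_R M\le n$ together with Proposition 5 (an $n$-Gorenstein ring being right coherent of finite left finitistic projective dimension). Now let $N$ be an arbitrary complex. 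If $sup H(N)=-\infty$ then $N$ is exact and $Gfd\,N=-\infty$ by Remark 2; if $sup H(N)=+\infty$ the asserted inequality is vacuous. So assume $s:=sup H(N)\in\mathbb{Z}$, set $g=n+s$, and pick a DG-projective resolution $P\to N$. Then $sup H(P)=sup H(N)=s\le g$, and because $H_i(P)=H_i(N)=0$ for $i\ge s+1$, the truncated complex $\cdots\to P_{s+1}\to P_s\to C_s(P)\to 0$ is a projective (hence flat) resolution of the module $C_s(P)$ whose $k$-th flat syzygy is exactly $C_{s+k}(P)$.

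The next step is to exploit $Gfd_R C_s(P)\le n$: since $R$ is left GF-closed, the $n$-th flat syzygy $C_{s+n}(P)=C_g(P)$ of $C_s(P)$ is Gorenstein flat (the fact recalled in the preliminaries gives this directly when $Gfd_R C_s(P)=n$, and when $Gfd_R C_s(P)=m<n$ one pushes the conclusion up from the $m$-th to the $n$-th syzygy through the short exact sequences $0\to\Omega^{k+1}\to P_{s+k}\to\Omega^k\to 0$, using that Gorenstein flat modules over a left GF-closed ring are closed under kernels of epimorphisms). That same closure property, applied to $0\to C_{j+1}(P)\to P_j\to C_j(P)\to 0$ for $j\ge g$, shows by an upward induction starting at $C_g(P)$ that $C_j(P)$ is Gorenstein flat for all $j\ge g$. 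Since also $sup H(P)\le g$, Theorem 1 yields $Gfd\,N\le g=n+sup H(N)$.

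For $(b)\Rightarrow(a)$: given a left $R$-module $M$, apply (b) to $\overline{M}$ (the complex with $M$ in degree $0$); since $sup H(\overline{M})=0$ this gives $Gfd\,\overline{M}\le n$, and Proposition 3 then yields $Gfd_R M\le n$. Hence every left $R$-module over the two-sided noetherian ring $R$ has Gorenstein flat dimension at most $n$, and I would invoke the characterisation of $n$-Gorenstein rings to conclude (\cite{enochs:00:relative}); concretely, the task reduces to $id_R R\le n$, i.e. to $Ext^{n+1}_R(M,R)=0$ for every finitely generated left $R$-module $M$, and this holds because the $n$-th syzygy of $M$ in a finitely generated free resolution is finitely generated and Gorenstein flat, hence Gorenstein projective, and so has vanishing positive $Ext$ into $R$.

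The main obstacle is $(a)\Rightarrow(b)$, specifically the transfer of the module-level bound $Gfd_R M\le n$ to an arbitrary, possibly unbounded, complex $N$. The delicate points are the identification of the $n$-th flat syzygy of the module $C_s(P)$ with the syzygy module $C_{n+s}(P)$ of the resolution, and then verifying that $C_j(P)$ remains Gorenstein flat for every $j\ge n+s$; once this is in place, Theorem 1 converts it into the statement $Gfd\,N\le n+sup H(N)$.
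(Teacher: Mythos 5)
Your proof of (a) $\Rightarrow$ (b) is correct and is essentially the paper's argument: truncate a resolution at $s=\sup H(N)$, use the module-level bound $Gfd_R\,C_s\le n$ over an $n$-Gorenstein ring, and push Gorenstein flatness upward through the cokernels. The paper does this directly with an arbitrary DG-flat resolution and the definition, while you use a DG-projective resolution and Theorem 1; the substance is the same, and your extra care with the degenerate cases and with $Gfd_R\,C_s(P)=m<n$ is fine. (Minor slip: the fact $Gfd\,\overline{M}=Gfd_R M$ is Proposition 2, not Proposition 3.)

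The genuine gap is in (b) $\Rightarrow$ (a). You reduce ``$R$ is $n$-Gorenstein'' to $id\,(_RR)\le n$. But $n$-Gorenstein, as defined in the paper (following \cite{enochs:00:relative}, Definition 9.1.1), means $R$ is Gorenstein --- finite self-injective dimension on \emph{both} sides --- with $id\,(_RR)\le n$; Proposition 9.1.8 gives $id\,R_R\le n$ only once both dimensions are already known to be finite. Finiteness of the left self-injective dimension of a two-sided noetherian ring is not known to imply finiteness of the right one (this is the Gorenstein symmetry question), so establishing $Ext^{n+1}_R(M,R)=0$ for all finitely generated left modules $M$ does not prove (a). Since hypothesis (b) concerns complexes of left modules, some input producing the right-hand side is needed; the paper gets around this by observing that every $n$-flat syzygy of every left (and right) $R$-module is Gorenstein flat and invoking the two-sided characterization of $n$-Gorenstein rings (\cite{enochs:00:relative}, Theorem 12.3.1), rather than arguing through $id\,(_RR)$ alone. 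A second, smaller problem: your claim that a finitely generated Gorenstein flat module over a two-sided noetherian ring is Gorenstein projective is a nontrivial assertion not available in the paper. It is also unnecessary for the vanishing you want: if $K$ is the $n$-th syzygy of $M$ and is Gorenstein flat, then $Tor_i^R(E,K)=0$ for every injective right module $E$ and $i\ge 1$, and since $R^+$ is an injective right module and $M$ is finitely generated over a noetherian ring, $Ext^{n+1}_R(M,R)^+\simeq Tor_{n+1}^R(R^+,M)\simeq Tor_1^R(R^+,K)=0$, hence $Ext^{n+1}_R(M,R)=0$. But even with this repair, the one-sidedness problem above remains and the step ``hence $R$ is $n$-Gorenstein'' does not follow as written.
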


\begin{proof}
a) $\Rightarrow $ b) True if $sup H(N) = \infty$.\\
- Case $sup H(N) = l < \infty$\\
Let $F \rightarrow N$ be a DG-flat resolution. Then $H_j(F) = 0$ for
any $j>l$. So we have an exact complex $ \ldots \rightarrow F_{l+1}
\rightarrow F_l \rightarrow C_l(F) \rightarrow 0$. Since $R$ is
n-Gorenstein, $Gfd_R C_l(F) \le n$. Thus $C_j (F)$ is Gorenstein
flat for any $j \ge n+l$. Therefore $Gfd N \le n+l$.\\
b) $\Rightarrow$ a)
If $N$ is a left $R$-module then by Proposition 2, $Gfd_R N = Gfd
\overline{N} \le n$ (where $\overline{N}$ is $N$ as a complex at
zero). So every $n$-flat syzygy of $N$ is Gorenstein flat.
Similarly, every n-flat syzygy of any right $R$-module is Gorenstein
flat. It follows that $R$ is n-Gorenstein
(\cite{enochs:00:relative}, Theorem 12.3.1).

\end{proof}

\section{Gorenstein cohomology for complexes; generalized Tate cohomology for complexes}
We define the Gorenstein cohomology for complexes over Gorenstein
rings. We also define a notion of generalized Tate cohomology for
complexes over Gorenstein rings and we show that there is a close
connection between the absolute, the Gorenstein, and the generalized
Tate cohomology.

Our definition of Gorenstein cohomology for complexes uses
Gorenstein projective precovers.

We recall first that a complex $G$ is Gorenstein projective if there
exists an exact resolution of complexes:

$ \ldots \rightarrow P_2 \rightarrow P_1 \rightarrow P_0 \rightarrow
P_{-1} \rightarrow P_{-2} \rightarrow \ldots$\\
such that each $P_i$ is a projective complex, the sequence remains
exact when applying $ Hom (-, P)$, for any projective complex $P$,
and $G = Ker (P_0 \rightarrow P_{-1})$.

We recall that $Hom(X,Y) = Z^0 \mathcal{H}om (X, Y)$ is the group of
maps of complexes from $X$ to $Y$, and $Ext(-,-)$ are the right
derived functors of $Hom(-,-)$.

We also recall the definition of a Gorenstein projective precover:\\

\begin{definition}(\cite{garcia:99:covers}, Definition 1.2.3)
Let $M$ be a complex. A Gorenstein projective precover of $M$ is a
map of complexes $\phi:G \rightarrow M$ with $G$ Gorenstein
projective and with the property that for every Gorenstein
projective complex $G'$ the sequence $Hom (G', G) \rightarrow Hom
(G', M) \rightarrow
0$ is exact.\\

\end{definition}


Throughout this section we work with the projective dimension
defined by Garc\'{i}a-Rozas in \cite{garcia:99:covers}:\\
The projective dimension of a complex $M$ is the least integer $n
\ge 0$ such that there exists an exact sequence $0 \rightarrow P_n
\rightarrow P_{n-1} \rightarrow \ldots \rightarrow P_0 \rightarrow M
\rightarrow 0$ with each $P_j$ a projective complex; if such an $n$
does not exist, then the projective dimension of $M$ is $\infty$.

It is shown in \cite{garcia:99:covers} that a complex $L= \ldots
\rightarrow L_{n+1} \rightarrow L_n \rightarrow L_{n-1} \rightarrow
\ldots$ has finite projective dimension if and only if $L$ is exact,
and for each $n \in Z$, $L_n$ and $Ker (L_n \rightarrow L_{n-1})$
are modules of finite projective dimension. The class of complexes
of finite projective dimension is denoted $\L$.

Over a Gorenstein ring, Garc\'{i}a-Rozas gave the following
characterization of Gorenstein projective complexes
(\cite{garcia:99:covers}, Theorem
3.3.5):\\
\begin{theorem}
 Let $R$ be a Gorenstein ring. The following conditions are
 equivalent for a complex $G$:\\
 1) $G$ is Gorenstein
projective;\\
2) $Ext^1 (G, L) = 0$ for all complexes $L$ of finite projective
dimension;\\
3) Each $G_n$ is a Gorenstein
projective module.\\
\end{theorem}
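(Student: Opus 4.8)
The plan is to prove the three stated conditions equivalent by the cycle $1) \Rightarrow 2) \Rightarrow 3) \Rightarrow 1)$, relying throughout on the fact that over a Gorenstein ring a module is Gorenstein projective precisely when it has finite projective dimension over... no --- more precisely, on the standard fact (valid over Gorenstein rings) that every module has finite Gorenstein projective dimension and that a module $M$ is Gorenstein projective iff $\ext{R}{1}(M,Q)=0$ for every module $Q$ of finite projective dimension. For $1)\Rightarrow 2)$: if $G$ is a Gorenstein projective complex, it sits inside a $Hom(-,\text{proj})$-exact exact sequence of projective complexes as a kernel; splice this into a complete projective resolution of $G$ in the category of complexes, apply $Hom(-,L)$ with $L \in \L$, and use that the hom-complex cohomology $\ext{}{1}(G,L)$ is computed by this complex, which is exact because each projective complex $P_i$ satisfies $\ext{}{k}(P_i,L)=0$ for $k\ge 1$ (projective complexes are in particular DG-projective, and $L$ is exact) and because the $Hom(-,\text{proj})$-exactness upgrades to $Hom(-,\L)$-exactness via a dimension-shifting argument on the short exact sequences $0\to P_n/Z \to P_{n-1}\to \cdots$ together with finiteness of $pd\,L$.

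For $2)\Rightarrow 3)$: assume $\ext{}{1}(G,L)=0$ for all $L\in\L$. I want to show each module $G_n$ satisfies $\ext{R}{1}(G_n,Q)=0$ for every $R$-module $Q$ of finite projective dimension. The key device is to feed into $\ext{}{1}(G,-)$ a well-chosen complex built from $Q$: given $Q$ with $pd_R Q<\infty$, form the complex $L$ that is $Q$ in a single degree (or a suitable disc/sphere complex $S^n(Q)$ or $D^n(Q)$); such a complex is exact with flat... rather, its cycles and components all have finite projective dimension, hence $L\in\L$ by the cited characterization from \cite{garcia:99:covers}. Then a computation of $\ext{}{1}(G, D^n(Q))$ or $\ext{}{1}(G, S^n(Q))$ in the category of complexes reduces --- by the adjunction/shift identities for these functors --- to $\ext{R}{1}(G_n, Q)$, forcing it to vanish. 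Since this holds for all finite-projective-dimension $Q$, each $G_n$ is Gorenstein projective by the module-level criterion recalled above.

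For $3)\Rightarrow 1)$: this is the substantive direction and where I expect the main obstacle. Suppose every $G_n$ is a Gorenstein projective module. Over a Gorenstein ring every module has finite Gorenstein projective dimension bounded by $n = id\,{}_RR$, so in particular $G$ (as a complex) has a DG-projective resolution and one can run the machinery of complete resolutions; the goal is to produce a $Hom(-,\text{proj-complex})$-exact exact sequence of projective complexes with $G$ as a cycle. The hard part is twofold: first, assembling the pointwise complete resolutions of the $G_n$ into an honest complex of projective complexes compatible with the differential of $G$ --- this requires a lifting/horseshoe argument across the differentials of $G$, using that maps between Gorenstein projective modules lift to maps of complete resolutions up to homotopy; second, verifying the $Hom(-, P)$-exactness for every \emph{projective complex} $P$ (not merely for complexes that are projective in each degree), which is where the precise structure theorem for projective complexes (exact, with projective components and projective cycles) from \cite{garcia:99:covers} must be used to split $P$ into sphere complexes $S^k(\text{proj})$ and reduce the hom-exactness to the pointwise $Hom(-,\text{proj})$-exactness already available from $3)$. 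I would handle the assembly by building the resolution degree by homological degree in the "resolution" direction, at each stage choosing a projective complex surjecting onto a kernel complex compatibly, and checking exactness of the totalization; the Gorenstein hypothesis on $R$ guarantees the process terminates in the sense that all the relevant syzygies stay Gorenstein projective.
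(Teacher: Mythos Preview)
The paper does not prove this theorem: it is quoted from Garc\'{i}a-Rozas's monograph (\cite{garcia:99:covers}, Theorem 3.3.5) and used as a black box thereafter. So there is no in-paper argument to compare against; what follows is an assessment of your plan on its own merits and relative to the cited source.

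Your $1)\Rightarrow 2)$ is correct: once you know $\ext{}{i}(G,Q)=0$ for all $i\ge 1$ and every projective complex $Q$ (which the $Hom(-,\text{Proj})$-exact resolution in the definition provides), dimension-shifting along a finite projective resolution of $L$ in the category of complexes gives $\ext{}{1}(G,L)=0$.

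In $2)\Rightarrow 3)$ you must commit to the disc complex, not the sphere. The complex $S^n(Q)$ (a module in a single degree) is not exact, hence not in $\L$ by the characterization you quote from \cite{garcia:99:covers}, so it cannot be used as a test object. The disc $D^{n+1}(Q)=(\cdots\to 0\to Q\xrightarrow{1}Q\to 0\to\cdots)$ is exact with components and cycles of finite projective dimension, hence lies in $\L$, and the adjunction $\Hom(X,D^{n+1}(Q))\cong \Hom_R(X_n,Q)$ yields $\ext{}{1}(G,D^{n+1}(Q))\cong \ext{R}{1}(G_n,Q)$. With that correction the implication goes through.

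The genuine gap is $3)\Rightarrow 1)$. Your plan to ``assemble the pointwise complete resolutions of the $G_n$'' does not yet explain the two hard points: (i) lifting the differential of $G$ to maps of complete resolutions gives compatibility only up to homotopy, whereas you need strict chain maps at every stage so that the assembled object is a complex of projective complexes; (ii) you must show that the cokernel of each embedding $G\hookrightarrow P^0$ again has Gorenstein projective components, and that the resulting short exact sequences are $\Hom(-,P)$-exact for every projective complex $P$ (an infinite direct sum of discs), not just for projective modules. Neither is addressed. Garc\'{i}a-Rozas's route is organized differently: he shows that over a Gorenstein ring $({}^{\perp}\L,\L)$ is a complete cotorsion pair in the category of complexes and then identifies ${}^{\perp}\L$ with the Gorenstein projective complexes. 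Granting that, $2)\Rightarrow 1)$ is obtained by taking a special $\L$-preenvelope $0\to G\to L\to G'\to 0$; here $L\in\L\cap{}^{\perp}\L$ is a projective complex, $G'$ again satisfies $2)$ (and hence $3)$), and iterating produces the right half of a complete resolution whose $\Hom(-,\text{Proj})$-exactness is immediate from $2)$. Your direct construction can be salvaged---embed $G$ into $\bigoplus_n D^{n+1}(Q_n)$ where $G_n\hookrightarrow Q_n$ has projective target and Gorenstein projective cokernel, and check that each component of the cokernel complex is an extension of a projective by a Gorenstein projective---but you would still owe the $\Hom(-,P)$-exactness verification, and this is not the argument in the cited source.
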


Theorem 3 gives the following result:\\
\begin{proposition}
 The projective dimension of a Gorenstein projective complex is
either zero or infinite.
\end{proposition}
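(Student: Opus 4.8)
The plan is to reduce the statement to the claim that a Gorenstein projective complex of \emph{finite} projective dimension is already projective; once that is established, the dichotomy follows at once, since a projective complex has projective dimension $0$. So assume $pd\,G = n < \infty$ for a Gorenstein projective complex $G$ (over a Gorenstein ring, as in Theorem 3), and suppose for contradiction that $n \ge 1$.

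First I would choose a projective resolution $0 \rightarrow P_n \rightarrow P_{n-1} \rightarrow \dots \rightarrow P_0 \rightarrow G \rightarrow 0$ in the category of complexes, with each $P_i$ a projective complex, and set $K = Ker(P_0 \rightarrow G)$. Truncating, $0 \rightarrow P_n \rightarrow \dots \rightarrow P_1 \rightarrow K \rightarrow 0$ is exact with projective terms, so $pd\,K \le n-1 < \infty$; in particular $K$ lies in the class $\L$ of complexes of finite projective dimension. Since $G$ is Gorenstein projective, the implication $1)\Rightarrow 2)$ of Theorem 3 gives $Ext^1(G,L)=0$ for every $L \in \L$, and in particular $Ext^1(G,K)=0$. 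Hence the short exact sequence $0 \rightarrow K \rightarrow P_0 \rightarrow G \rightarrow 0$ splits, so $G$ is a direct summand of the projective complex $P_0$. A direct summand of a projective complex is projective, so $G$ is projective and $pd\,G = 0$, contradicting $n \ge 1$. Therefore a Gorenstein projective complex can never have finite projective dimension $\ge 1$, so its projective dimension is $0$ or $\infty$.

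The argument is short, and the only points needing care are: (i) verifying that the syzygy $K$ genuinely belongs to $\L$ so that Theorem 3 applies — which is immediate from the truncated resolution; and (ii) invoking closure of the class of projective complexes under direct summands to pass from ``$G$ is a summand of $P_0$'' to ``$G$ is projective'' (alternatively one may use $pd(G \oplus K) = \max\{pd\,G, pd\,K\} = pd\,P_0 = 0$). There is no real obstacle here: all the substance is carried by Theorem 3, and everything else is elementary dimension bookkeeping.

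I would also remark, for completeness, that the $Ext$-vanishing used above does not even require the full strength of Theorem 3: the complete resolution $\cdots \rightarrow P_1 \rightarrow P_0 \rightarrow P_{-1} \rightarrow \cdots$ witnessing that $G$ is Gorenstein projective provides, on the left, a genuine projective resolution $\cdots \rightarrow P_1 \rightarrow P_0 \rightarrow G \rightarrow 0$ of $G$ by projective complexes, and its $Hom(-,Q)$-exactness for projective complexes $Q$ forces $Ext^1(G,Q)=0$ for such $Q$; one then recovers $Ext^1(G,K)=0$ by dimension shifting along a projective resolution of $K$. Using Theorem 3 simply packages this cleanly.
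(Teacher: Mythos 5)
Your proof is correct and takes essentially the paper's route: both arguments split a finite projective resolution of $G$ by means of the Ext-vanishing in Theorem 3(2) and conclude that $G$ is a direct summand of a projective complex, hence projective, so $pd\,G$ cannot be finite and nonzero. The only (cosmetic) difference is that the paper uses the projectively resolving property to make every syzygy Gorenstein projective and splits the whole resolution, whereas you split just the top sequence $0 \rightarrow K \rightarrow P_0 \rightarrow G \rightarrow 0$ using $Ext^1(G,K)=0$ with $K$ of finite projective dimension, which is if anything slightly leaner.
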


\begin{proof}
Let $G$ be a Gorenstein projective complex. Suppose $pd G = l <
\infty$ and let $0 \rightarrow P_l \xrightarrow{f_l} P_{l-1}
\rightarrow \ldots \rightarrow P_0 \xrightarrow{f_0} G \rightarrow0$
be a finite projective resolution. Since the class of Gorenstein
projectives is projectively resolving, $Im f_j$ is Gorenstein
projective for all j. By Theorem 3 (part 2), the resolution is split
exact since each $P_j$ is projective. Thus $G$ is projective.
\end{proof}

Garc\'{i}a-Rozas showed that over a Gorenstein ring every complex
$M$ has a special Gorenstein precover, i.e. a Gorenstein projective
precover $G \rightarrow M$ with $Ker (G \rightarrow M)$ a complex of
finite projective dimension.

\begin{remark}
A special Gorenstein projective precover is unique up to homotopy.
\end{remark}

\begin{proof}
Let $\phi:G \rightarrow M$ and $\phi':G' \rightarrow M$ be two
special Gorenstein projective precovers. Let $u: G \rightarrow G'$
and $v: G \rightarrow G'$ be maps of complexes induced by $1_M$.
Then $\phi' u = \phi$ and $\phi' v = \phi$. 
So $u-v: G \rightarrow Ker \phi'$. By hypothesis, $L = Ker \phi'$ is
a complex of finite projective dimension.\\ If $\theta: A
\rightarrow L$ is a special Gorenstein projective precover then we
have an exact sequence $0 \rightarrow Ker \theta \rightarrow A
\xrightarrow{\theta} L \rightarrow 0$ with $L$ and $Ker \theta$ of
finite projective dimension. Then $A$ is Gorenstein projective of
finite projective dimension, hence projective. Since $G$ is
Gorenstein projective, and $u-v: G \rightarrow L$, there
exists $\omega: G \rightarrow A$ such that $\theta \omega = u-v$.\\
By the definition of Gorenstein projective complexes, there is an
exact sequence $0 \rightarrow G \xrightarrow{j} P \rightarrow X
\rightarrow 0$ with $P$ projective and with $X$ Gorenstein
projective. This gives an exact sequence $0 \rightarrow Hom(X,A)
\rightarrow Hom(P,A)
\rightarrow Hom(G,A) \rightarrow Ext^1(X,A)=0$ (by Theorem 3).\\
 Since  $Hom(P,A)
\rightarrow Hom(G,A) \rightarrow 0$ is exact and $\omega \in
Hom(G,A)$ there exists $l:P \rightarrow A$ such that $\omega =l j$.
But $P$ is a projective complex, so $l$ is homotopic to 0. Then
$\omega =l j$ is homotopic to zero and therefore $u-v = \theta
\omega$ is homotopic to zero.\\
The identity map $1_M$ also induces a map of complexes $ \alpha: G'
\rightarrow G$. Then $ \alpha u: G \rightarrow G$ and $1_G$ are both
induced by $1_M$. By the above $ \alpha u$ and $1_G$ are homotopic.
Similarly, $u \alpha \sim 1_{G'}$.
\end{proof}

Since over a Gorenstein ring every complex has a special Gorenstein
projective precover and such a precover is unique up to homotopy, we
can compute right derived functors of $\mathcal{H}om(-,-)$ by means
of special Gorenstein projective precovers.

\begin{definition}
Let $R$ be a Gorenstein ring and let $M$ be a complex of
$R$-modules. For each complex $N$, the nth relative Gorenstein
cohomology group $Ext_{\mathcal{G}}^n (M,N)$ is defined by the
equality $Ext^n_{\mathcal{G}} (M,N) = H^n \mathcal{H}om (G, N)$,
where $G \rightarrow M$ is a special Gorenstein projective precover
of $M$.
\end{definition}

\begin{remark}
If $M$ and $N$ are modules regarded as complexes at zero, then
$Ext_{\mathcal{G}}^n (M,N)$ are the usual Gorenstein cohomology
groups.
\end{remark}

\begin{proof}
Let $\overline{M} = 0 \rightarrow M \rightarrow 0$. Let $ \ldots
\rightarrow G_1 \xrightarrow{g_1} G_0 \xrightarrow{g_0} M
\rightarrow 0$ be a special Gorenstein projective resolution of $M$
(i.e. $G_0 \xrightarrow{g_0} M$ and $G_i \xrightarrow{g_i}
Ker(G_{i-1} \rightarrow G_{i-2})$ are Gorenstein projective
precovers such that $Ker g_i$ has finite projective dimension). Then
$(\overline{G} \rightarrow \overline{M})$ is a special Gorenstein
projective precover, where $\overline{G} = \ldots \rightarrow G_1
\rightarrow G_0 \rightarrow 0$. Since $\overline{N}$ is the module
$N$ at zero, $\mathcal{H}om (\overline{G}, \overline{N})$ is the
complex $ \ldots \rightarrow Hom (G_1, N) \rightarrow Hom (G_0, N)
\rightarrow 0$. Thus $Ext_{\mathcal{G}}^n (M,N)=H^n
\mathcal{H}om(\overline{G}, \overline{N})$ are the usual Gorenstein
cohomology groups.
\end{proof}

Over a Gorenstein ring we also define generalized Tate cohomology
groups $\overline{Ext}^n (M,N)$, by the combined use of a DG-projective resolution and a special Gorenstein projective resolution of $M$.\\

Let $R$ be a Gorenstein ring, and let $M$ be a complex of
$R$-modules. Let $P \xrightarrow{\delta} M$ be a surjective
DG-projective resolution and let $G \xrightarrow{\phi} M$ be a
Gorenstein projective precover. Since $P$ is Gorenstein projective
there is a map of complexes $u: P \rightarrow G$ such that $\delta =
\phi \circ u$.\\

\begin{remark}
If $P \xrightarrow{\delta} M$ is a surjective DG-projective
precover, $G \xrightarrow{\varphi} M$ a special Gorenstein
projective precover, and $\alpha$, $\beta: P \rightarrow G$ are maps
of complexes induced by $1_M$ then $\alpha$ and $\beta$
are homotopic.\\
\end{remark}

\begin{proof}
Let $P' \xrightarrow{\theta} G$ be a surjective DG-projective
resolution. The exact sequence $0 \rightarrow E \rightarrow P'
\rightarrow G \rightarrow 0$ with $E$ exact complex gives an exact
sequence $0 \rightarrow Hom(P,E) \rightarrow Hom(P, P') \rightarrow
Hom(P, G) \rightarrow Ext^1(P,E) = 0$ (by
\cite{enochs:96:orthogonality}, Proposition 3.5). Since $Hom(P,P')
\rightarrow Hom(P, G)$ is surjective, there is a map of
complexes $u: P \rightarrow P'$ such that $\alpha = \theta u$. \\
Similarly, $\beta = \theta v$ for some $v:P \rightarrow P'$.\\
 Since
both $P \xrightarrow{\delta} M$ and $P' \xrightarrow{\varphi\theta}
M$ are DG-projective resolutions, and $u,v:P \rightarrow P'$ are
induced by $1_M$, we have $u \sim v$.
Then $\alpha = \theta u \sim \theta v = \beta$.\\
\end{proof}

\begin{definition}(generalized Tate cohomology)
Let $R$ be a Gorenstein ring and let $M$ be a complex of
$R$-modules. Let $P \rightarrow M$ be a surjective DG-projective
resolution, let $G \rightarrow M$ be a special Gorenstein projective
resolution, and let $u:P \rightarrow G$ be a map of complexes
induced by $1_M$. Let $M(u)$ be the mapping cone of $u$. For each
complex $N$ the nth generalized Tate cohomology group is defined by
the equality
$\overline Ext^n(M,N) = H^{n+1} \mathcal{H}om (M,N)$.\\
\end{definition}



We show first that the $\overline {Ext}^n(M,N)$ are well
defined.\\



- If $P \rightarrow M$ is a surjective DG-projective resolution, $G
\rightarrow M$ a special Gorenstein projective precover and $u, v :
P \rightarrow G$ are maps of complexes induced by $1_M$ then their mapping cones $M(u)$ and $M(v)$ are isomorphic.\\
 Proof: Let $P = \ldots \rightarrow P_{n+1} \xrightarrow{f_{n+1}} P_n \xrightarrow{f_n}
P_{n-1} \rightarrow \ldots$, and $G=\ldots \rightarrow G_{n+1}
\xrightarrow{g_{n+1}} G_n \xrightarrow{g_n} G_{n-1} \rightarrow
\ldots$.\\
 Since $u$ and $v$ are both induced by
$1_M$ they are homotopic; so for each $n$ there is $s_n \in Hom
(P_n, G_{n+1})$ such that $u_n -
v_n = g_{n+1} s_n + s_{n-1} f_n$.\\
There are maps of complexes $\omega:M(u) \rightarrow M(v)$, with
$\omega_n :G_{n+1} \oplus P_n \rightarrow G_{n+1} \oplus P_n$,
$\omega_n(x,y)=(x+s_n(y), y)$, and $\psi: M(v) \rightarrow M(u)$,
with $\psi_n:G_{n+1} \oplus P_n \rightarrow G_{n+1} \oplus P_n$
given by $\psi_n(x,y) = (x-s_n(y), y)$. Then $\omega_n \psi_n (x,y)
=(x,y)$ and $\psi_n \omega_n (x,y)= (x,y)$ for all $(x,y) \in
G_{n+1} \oplus P_n$.


- If $P \xrightarrow{\theta} M$, $P' \xrightarrow{\theta'} M$ are
two surjective DG-projective resolutions and $G \xrightarrow{\phi}
M$ is a special Gorenstein projective precover then there are maps
of complexes $u: P \rightarrow G$ and $v: P' \rightarrow G$, such
that $\phi u = \theta$ and $\phi v = \theta'$. We show that their
mapping cones, $M(u)$ and $M(v)$, are homotopically equivalent.

Proof: Since $P \rightarrow M$ and $P' \rightarrow M$ are
DG-projective resolutions there are maps $h:P \rightarrow P'$ and
$k:P' \rightarrow P$ induced by $1_M$. Then $vh:P \rightarrow G$ and
$u:P \rightarrow G$ are both induced by $1_M$. By the above, $M(u)$
and $M(vh)$ are isomorphic. So it suffices to show that $M(v)$ and
$M(vh)$ are homotopically equivalent.

Let $P = \ldots \rightarrow P_{n+1} \xrightarrow{f_{n+1}} P_n
\xrightarrow{f_n} P_{n-1} \rightarrow \ldots$, let $P' = \ldots
\rightarrow P'_{n+1} \xrightarrow{f'_{n+1}} P'_n \xrightarrow{f'_n}
P'_{n-1} \rightarrow \ldots$, and $G = \ldots \rightarrow G_{n+1}
\xrightarrow{g_{n+1}} G_n \xrightarrow{g_n} G_{n-1} \rightarrow
\ldots$.

 Since $hk:P' \rightarrow P'$ is induced by $1_M$, $hk$ is
homotopic to $1_{P'}$.\\ So for each n there exists $s_n \in
Hom(P'_n
, P'_{n+1})$ such that $1 - h_n k_n = f'_{n+1} s_n + s_{n-1} f'_n$.\\
Similarly $kh$ is homotopic to $1_P$, so for each n there exists
$\overline{s}_n \in Hom(P_n, P_{n+1})$, such that $1 - k_n h_n =
f_{n+1} \overline{s}_n + \overline{s}_{n-1} f_n$.\\

Let $M(v)$ denote the mapping cone of $v$:\\
$M(v) = \ldots \rightarrow G_{n+2} \oplus P'_{n+1}
\xrightarrow{\delta'_{n+1}} G_{n+1} \oplus P'_n
\xrightarrow{\delta'_n} G_n \oplus P'_{n-1} \rightarrow \ldots$,
with $\delta'_n = (g_{n+1}(x)+v_n(y), -f'_n(y))$.\\
Let $M(vh)$ be the mapping cone of $vh$:\\
$M(vh) = \ldots \rightarrow G_{n+2} \oplus P_{n+1}
\xrightarrow{\delta_{n+1}} G_{n+1} \oplus P_n \xrightarrow{\delta_n}
G_n \oplus P_{n-1} \rightarrow \ldots$,
with $\delta_n = (g_{n+1}(x)+v_n h_n(y), -f_n(y))$.\\

There are maps of complexes: $\alpha: M(v) \rightarrow M(vh)$, with
$\alpha_n:G_{n+1} \oplus P'_n \rightarrow G_{n+1} \oplus P_n$,
$\alpha_n(x,y)=(x+v_{n+1} s_n(y), k_n(y))$, and $\beta : M(vh)
\rightarrow M(v)$, with $\beta_n : G_{n+1} \oplus P_n \rightarrow
G_{n+1} \oplus P'_n$, given by $\beta_n (x,y) = (x+ v_{n+1} h_{n+1}
\overline{s}_n (y) - v_{n+1} s_n h_n (y), h_n(y))$.

Then $\alpha_n \beta_n (x,y) = (x + v_{n+1} h_{n+1} \overline{s}_n
(y), k_n h_n (y)).$

Let $\eta_n: G_{n+1} \oplus P_n \rightarrow G_{n+2} \oplus P_{n+1}$,
$\eta_n(x,y) = (0, \overline{s}_n (y))$.\\
Then $(\delta_{n+1} \eta_n + \eta_{n-1} \delta_n)(x,y) =
(v_{n+1} h_{n+1} \overline{s}_n (y), (k_n h_n -1)(y)) = \alpha_n
\beta_n (x,y) - (x,y)$. So $\alpha \beta \sim 1$.\\

We have $\beta_n \alpha_n (x,y) = (x+ v_{n+1} s_n(y) + v_{n+1}
h_{n+1} \overline{s}_n k_n (y) -v_{n+1} s_n h_n k_n (y), h_n
k_n(y))$.\\
Let $\mu_n : G_{n+1} \oplus P'_n \rightarrow G_{n+2} \oplus
P'_{n+1}$, $\mu_n (x,y) = (0, (s_n + h_{n+1} \overline{s}_n k_n -
s_n h_n k_n)(y))$.\\
Then $(\delta'_{n+1} \mu_n + \mu_{n-1} \delta'_n)(x,y)=
((v_{n+1} s_n + v_{n+1} h_{n+1} \overline{s}_n k_n -
v_{n+1} s_n h_n k_n)(y), (h_n k_n -1)(y)) = \beta_n \alpha_n (x,y) -
(x,y)$.\\
Thus $\beta \alpha \sim 1$.\\
So $M(v) \sim M(vh) \simeq M(u)$.    \hspace{65mm} (2)

- Similarly, if $P \rightarrow M$ is a surjective DG-projective
resolution, $G \rightarrow M$ and $G' \rightarrow M$ are special
Gorenstein projective precovers, and $u:P \rightarrow G$ and $v: P
\rightarrow
G'$ are induced by $1_M$, then $M(u) \sim M(v)$. \hspace{21mm} (3)\\

- If $P \rightarrow M$, $P' \rightarrow M$ are surjective
DG-projective resolutions, $G \rightarrow M$, $G' \rightarrow M$ are
special Gorenstein projective precovers, and $u: P \rightarrow G$,
$v: P' \rightarrow G'$ are maps of complexes induced by $1_M$, then
$M(u) \sim M(v)$.

Proof: There are maps of complexes $h:P \rightarrow P'$, $l: G'
\rightarrow
G$, both induced by $1_M$.\\
By (2), $M(lv) \sim M(u)$, and by (3), $M(v) \sim M(lv)$. So $M(u) \sim M(v)$.\\

\vspace{17mm} We denote by $Ext_R (-,-)$ the right derived functors
of $\mathcal{H}om(-,-)$ (the absolute cohomology). We show that over
Gorenstein rings there is a close connection between the absolute,
the Gorenstein and the generalized
Tate cohomology:\\

\begin{proposition}
Let $R$ be a Gorenstein ring, and let $M$ be a complex of
$R$-modules. For each complex $N$ of $R$-modules there is an exact
sequence $ \ldots \rightarrow Ext_R ^ {n-1} (M, N) \rightarrow
\overline{Ext}^{n-1} (M, N)
\rightarrow Ext^n_{\G }(M, N) \rightarrow Ext^n _R(M,N) \rightarrow \overline{Ext}^n(M,N) \rightarrow \ldots$.\\
\end{proposition}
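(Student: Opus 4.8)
The plan is to realize all three cohomology theories as the cohomology of the three terms of a single short exact sequence of $\mathcal{H}om$-complexes — the one attached to the mapping cone $M(u)$ — and then pass to the associated long exact sequence.

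First I would fix a surjective DG-projective resolution $\pi\colon P\rightarrow M$ and a special Gorenstein projective precover $\phi\colon G\rightarrow M$. Over a Gorenstein ring every DG-projective complex is Gorenstein projective (by Theorem 3, since each $P_n$ is projective, hence a Gorenstein projective module), so $Hom(P,G)\rightarrow Hom(P,M)\rightarrow 0$ is exact and one may choose a chain map $u\colon P\rightarrow G$ with $\phi u=\pi$; let $M(u)$ be its mapping cone. The relevant dictionary is: $Ext^n_R(M,N)=H^n\mathcal{H}om(P,N)$ because $P\rightarrow M$ is a DG-projective resolution, $Ext^n_{\G}(M,N)=H^n\mathcal{H}om(G,N)$ by definition, and $\overline{Ext}^n(M,N)=H^{n+1}\mathcal{H}om(M(u),N)$ by definition; all three are independent of the chosen resolution, precover and lift — for $M(u)$ this is exactly statements (2) and (3) established above.

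The mapping cone comes with the canonical short exact sequence of complexes $0\rightarrow G\rightarrow M(u)\rightarrow \Sigma P\rightarrow 0$, where $\Sigma P$ is $P$ shifted by one, $(\Sigma P)_n=P_{n-1}$ (with the sign-twisted differential). As a graded module $M(u)_n=G_n\oplus P_{n-1}$, so this sequence is split in every degree; hence applying the contravariant functor $\mathcal{H}om(-,N)$ preserves its exactness (a product of degreewise-split exact sequences of abelian groups is exact), and one obtains a short exact sequence of complexes
$$0\rightarrow \mathcal{H}om(\Sigma P,N)\rightarrow \mathcal{H}om(M(u),N)\rightarrow \mathcal{H}om(G,N)\rightarrow 0 .$$
Passing to the long exact cohomology sequence and identifying the terms via the dictionary above — together with the elementary identity $\mathcal{H}om(\Sigma P,N)_n=\mathcal{H}om(P,N)_{n+1}$, which gives $H^n\mathcal{H}om(\Sigma P,N)=Ext^{n-1}_R(M,N)$, and $H^n\mathcal{H}om(M(u),N)=\overline{Ext}^{n-1}(M,N)$ — turns it into
$$\cdots\rightarrow Ext^{n-1}_R(M,N)\rightarrow \overline{Ext}^{n-1}(M,N)\rightarrow Ext^n_{\G}(M,N)\rightarrow Ext^n_R(M,N)\rightarrow \overline{Ext}^n(M,N)\rightarrow\cdots ,$$
which is the claimed sequence.

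The argument is thus essentially formal once the three theories are in place, and I expect the only delicate point to be the grading bookkeeping: one has to check that the shift in $\Sigma P$ and the shift ``$+1$'' built into the definition of $\overline{Ext}$ really combine to give the stated indices rather than a sequence off by one (this is what pins down the precise shift convention used for $M(u)$). I would also record, as is standard for mapping cones, that the map $Ext^n_{\G}(M,N)\rightarrow Ext^n_R(M,N)$ appearing here is, up to sign, the one induced by $u\colon P\rightarrow G$, and note that the whole sequence is independent of all choices because each of the three functors is.
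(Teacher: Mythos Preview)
Your proposal is correct and follows essentially the same route as the paper: pick a surjective DG-projective resolution $P\to M$ and a special Gorenstein projective precover $G\to M$, lift to $u\colon P\to G$, use the degreewise split mapping-cone sequence $0\to G\to M(u)\to P[1]\to 0$, apply $\mathcal{H}om(-,N)$, and read off the long exact sequence with the same identifications of terms. Your write-up is a bit more explicit about why $P$ is Gorenstein projective and why degreewise splitness guarantees exactness after $\mathcal{H}om(-,N)$, but the argument is the same.
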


\begin{proof}
Let $P \xrightarrow{\beta} M$ be a surjective DG-projective
resolution and let $G \xrightarrow{\alpha} M$ be a special
Gorenstein projective precover. $P$ is Gorenstein projective, so
there is a map of complexes $u:P \rightarrow G$
such that $\beta = \alpha u$.\\
Since the sequence $0 \rightarrow G \rightarrow M(u) \rightarrow
P[1] \rightarrow 0$ is split exact in each degree, for each complex
$N$ we have an exact sequence $0 \rightarrow \mathcal{H}om((P[1], N)
\rightarrow \mathcal{H}om (M(u), N) \rightarrow \mathcal{H}om(G, N)
\rightarrow 0$. This gives a long exact sequence $ \ldots
\rightarrow H^n \mathcal{H}om(P[1], N) \rightarrow H^n \mathcal{H}om
(M(u), N) \rightarrow H^n \mathcal{H}om (G, N) \rightarrow H^{n+1}
\mathcal{H}om (P[1], N) \rightarrow \ldots$, that is \\
$ \ldots \rightarrow Ext_R ^{n-1} (M, N) \rightarrow
\overline{Ext}^{n-1} (M, N)
\rightarrow Ext^n_{\G} (M, N) \rightarrow Ext_R ^n(M,N) \rightarrow \ldots$.\\
\end{proof}

\begin{remark}
Let $R$ be a Gorenstein ring. If $M$ and $N$ are $R$-modules
regarded as complexes at zero, then the exact sequence above gives
the Avramov-Martsinkovsky exact sequence connecting the absolute,
the Gorenstein relative and the Tate cohomology of modules (see
\cite{avramov:02:absolute}, Theorem 7.1,
\cite{veliche:04:gorenstein}, Theorem 6.6, or
\cite{iacob:04:generalized},Corollary 1):\\
 $ 0 \rightarrow Ext^1_{\G}(M,N) \rightarrow Ext^1_R(M,N) \rightarrow
\widehat{Ext}^1 (M,N) \rightarrow \ldots $
\end{remark}

\begin{proof}
Let $\ldots \rightarrow P_1 \rightarrow P_0 \rightarrow M
\rightarrow 0$ be a projective resolution and let $ \ldots
\rightarrow G_1 \xrightarrow{g_1} G_0 \xrightarrow{g_0} M
\rightarrow M \rightarrow 0$ be a special Gorenstein projective
resolution of $M$ (i.e. $G_0 \rightarrow M$, $G_i \rightarrow Ker
g_{i-1}$ are Gorenstein projective precovers and $Ker g_i$ has
finite projective dimension for each $i \ge 0$). Let $P = \ldots
\rightarrow P_1 \rightarrow P_0 \rightarrow 0$ and $G = \ldots
\rightarrow G_1 \rightarrow G_0 \rightarrow 0$. Then $P \rightarrow
(0 \rightarrow M \rightarrow 0)$ is a surjective DG-projective
resolution, and $G \rightarrow (0 \rightarrow M \rightarrow 0)$ is a
special Gorenstein projective resolution. If $u: P \rightarrow G$ is
induced by $1_M$ then by \cite{iacob:04:generalized} Proposition 1,
the cohomology modules $H^n \mathcal{H}om (M(u), N)$ are the usual
Tate cohomology modules $\widehat{Ext} ^n _R (M,N)$ for $n>0$; by
Remark 5, $H^n \mathcal{H}om (G,N)$ are the usual Gorenstein
cohomology groups.
\end{proof}

 We show that if $R$ is  a Gorenstein ring then a $Hom (Gor Proj, -)$ exact sequence of complexes $0 \rightarrow M'
 \rightarrow M \rightarrow M" \rightarrow 0$  gives a long exact
 sequence $ \ldots \rightarrow \overline {Ext}^n(M", -) \rightarrow \overline {Ext}^n(M, -)
 \rightarrow \overline {Ext}^n(M', -) \rightarrow \overline {Ext}^{n+1}(M",
  -) \rightarrow \ldots$.

  We will use the following result (the Horseshoe lemma for Gorenstein precovers).\\

  \begin{lemma}(Horseshoe Lemma)
  Let $R$ be a Gorenstein ring. If $0 \rightarrow M' \xrightarrow
  {l} M \xrightarrow{h} M" \rightarrow 0$ is a $Hom (Gor Proj,-)$ exact sequence of
  complexes then there is an exact sequence $0 \rightarrow F'
  \rightarrow F' \oplus F" \rightarrow F" \rightarrow 0$ with $F' \rightarrow
  N'$, $F' \oplus F" \rightarrow N$ and $F" \rightarrow N"$ special Gorenstein
  projective precovers.
  \end{lemma}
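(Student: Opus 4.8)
The plan is to imitate the classical Horseshoe Lemma, with ``projective module'' replaced by ``special Gorenstein projective precover'' and with the auxiliary complexes of finite projective dimension playing the role of the kernels (the $N',N,N''$ in the statement are of course $M',M,M''$). First I would use Garc\'{i}a-Rozas's result recalled above: $M'$ and $M''$ have special Gorenstein projective precovers $\phi'\colon F'\to M'$ and $\phi''\colon F''\to M''$; these are surjective, and $K':=\Ker\phi'$, $K'':=\Ker\phi''$ are complexes of finite projective dimension. Since the given sequence $0\to M'\xrightarrow{l}M\xrightarrow{h}M''\to 0$ is $\Hom(\text{Gor Proj},-)$ exact and $F''$ is Gorenstein projective, the map $\Hom(F'',M)\to\Hom(F'',M'')$ is surjective, so there is a chain map $\psi\colon F''\to M$ with $h\psi=\phi''$. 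Set
\[
\Phi\colon F'\oplus F''\to M,\qquad \Phi(x,y)=l\phi'(x)+\psi(y).
\]
A routine verification shows that $\Phi$ is a chain map and that $\phi'$, $\Phi$, $\phi''$ form a commutative diagram whose top row is the split exact sequence $0\to F'\to F'\oplus F''\to F''\to 0$ and whose bottom row is the given sequence.

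Next I would apply the Snake Lemma to that diagram --- equivalently, perform the usual degreewise element chase, which uses only exactness of the bottom row in each degree. Since $\phi'$ and $\phi''$ are surjective, this gives that $\Phi$ is surjective and that there is a short exact sequence of complexes $0\to K'\to\Ker\Phi\to K''\to 0$. From here two closure facts do the rest. First, $F'\oplus F''$ is Gorenstein projective: over a Gorenstein ring a complex is Gorenstein projective precisely when all of its components are (Theorem~3), and the class of Gorenstein projective modules is closed under finite direct sums (it is projectively resolving, hence closed under extensions). Second, $\Ker\Phi$ has finite projective dimension, since $pd\,\Ker\Phi\le\max\{pd\,K',pd\,K''\}<\infty$ by the standard inequality for projective dimension in a short exact sequence (equivalently, the class of complexes of finite projective dimension is closed under extensions).

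It remains to check that $\Phi$ is a genuine \emph{precover}, not merely an epimorphism with kernel of finite projective dimension. For any Gorenstein projective complex $G$, the exact sequence $0\to\Ker\Phi\to F'\oplus F''\to M\to 0$ induces an exact sequence $\Hom(G,F'\oplus F'')\to\Hom(G,M)\to Ext^1(G,\Ker\Phi)$, and $Ext^1(G,\Ker\Phi)=0$ by Theorem~3 because $\Ker\Phi$ has finite projective dimension; hence $\Hom(G,F'\oplus F'')\to\Hom(G,M)\to 0$ is exact. Combining this with the two closure facts, $\Phi\colon F'\oplus F''\to M$ is a special Gorenstein projective precover, and the commutative diagram constructed above is exactly the one asserted. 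I expect the only point needing care to be the Snake Lemma step: one must check that both squares of the diagram commute and that all three vertical maps are epimorphisms (so that the connecting homomorphism vanishes and $0\to K'\to\Ker\Phi\to K''\to 0$ comes out exact); granting that, the remainder is a direct application of Theorem~3 together with the closure of Gorenstein projective complexes under finite sums and of $\mathcal{L}$ under extensions.
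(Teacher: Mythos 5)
Your proposal is correct and follows essentially the same route as the paper: lift $\phi''$ through $h$ using the $\Hom(\text{Gor Proj},-)$ exactness, define the map on $F'\oplus F''$ by $(x,y)\mapsto l\phi'(x)+\psi(y)$, and deduce from the kernel sequence $0\to K'\to\Ker\Phi\to K''\to 0$ that $\Ker\Phi$ has finite projective dimension. The only difference is that you spell out explicitly (via Theorem 3 and the vanishing of $Ext^1(G,\Ker\Phi)$) why this makes $\Phi$ a special Gorenstein projective precover, a point the paper leaves implicit.
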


 \begin{proof}

 Let $F' \xrightarrow{\phi'} M'$ and $F" \xrightarrow{\phi"} M"$ be
 special Gorenstein projective precovers. By hypothesis the sequence
 $Hom(F", M) \rightarrow Hom(F", M") \rightarrow 0$ is exact. So
 there exists $u \in Hom (F", M)$ such that $\phi" = hu$.\\
 Let $\phi: F' \oplus F" \rightarrow M$ be given by $\phi_n(x,y) =
 l_n \phi'_n (x) + u_n(y)$.\\
 We have a commutative diagram
\[
\begin{diagram}
\node{0}\arrow{e}\node{F'}\arrow{s,r}{\phi'}\arrow{e} \node{F'\oplus
F''}\arrow{s,r}{\phi}\arrow{e}\node{F''}\arrow{s,r}{\phi''}\arrow{e}\node{0}\\
\node{0}\arrow{e}\node{M'}\arrow{s}\arrow{e,t}{l}\node{M}\arrow{s}\arrow{e,t}{h}\node{M''}
\arrow{s}\arrow{e}\node{0}\\
\node{}\node{0}\node{0}\node{0}
\end{diagram}
\]


The exact sequence $0 \rightarrow Ker \phi' \rightarrow Ker \phi
\rightarrow Ker \phi" \rightarrow 0$ with both $Ker \phi'$ and $Ker
\phi"$ of finite projective dimension gives that $Ker \phi$ has
finite projective dimension. So $F' \oplus F" \xrightarrow{\phi} M$
is a special Gorenstein projective precover.

 \end{proof}

 \begin{proposition}
 Let $0 \rightarrow M' \xrightarrow
  {l} M \xrightarrow{h} M" \rightarrow 0$ be a $Hom (Gor Proj, -)$ exact sequence. Then for each
  complex $N$ we have an exact sequence $ \ldots \rightarrow
  Ext_{\G} ^n (M", N) \rightarrow  Ext_{\G} ^n (M, N) \rightarrow  Ext_{\G} ^n (M', N) \rightarrow Ext_{\G} ^{n+1} (M",
  N) \rightarrow \ldots$
  \end{proposition}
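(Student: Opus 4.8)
The plan is to reduce the statement to the long exact cohomology sequence attached to a degreewise-split short exact sequence of $\mathcal{H}om$-complexes, the input being supplied by the Horseshoe Lemma just proved.

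First I would apply the Horseshoe Lemma (Lemma 5) to the given $Hom(Gor Proj,-)$ exact sequence $0 \rightarrow M' \xrightarrow{l} M \xrightarrow{h} M'' \rightarrow 0$. This produces special Gorenstein projective precovers $\phi' : F' \rightarrow M'$, $\phi'' : F'' \rightarrow M''$ and $\phi : F' \oplus F'' \rightarrow M$ fitting into the canonical short exact sequence of complexes
\[
0 \rightarrow F' \rightarrow F' \oplus F'' \rightarrow F'' \rightarrow 0,
\]
with split injection $x \mapsto (x,0)$ and projection $(x,y) \mapsto y$ in every degree. It is worth noting that the $Hom(Gor Proj,-)$ hypothesis on the original sequence is used exactly here, inside the Horseshoe Lemma, to lift $\phi''$ along $h$; once Lemma 5 has been applied, the hypothesis plays no further role.

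Second, since for each degree $n$ the sequence $0 \rightarrow F'_n \rightarrow F'_n \oplus F''_n \rightarrow F''_n \rightarrow 0$ is split, the contravariant additive functor $\mathcal{H}om(-,N)$ carries it to an exact sequence of complexes of abelian groups
\[
0 \rightarrow \mathcal{H}om(F'',N) \rightarrow \mathcal{H}om(F'\oplus F'',N) \rightarrow \mathcal{H}om(F',N) \rightarrow 0 .
\]
Its long exact sequence in cohomology reads
\[
\ldots \rightarrow H^n\mathcal{H}om(F'',N) \rightarrow H^n\mathcal{H}om(F'\oplus F'',N) \rightarrow H^n\mathcal{H}om(F',N) \rightarrow H^{n+1}\mathcal{H}om(F'',N) \rightarrow \ldots
\]

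Finally, by the definition of the relative Gorenstein cohomology, $H^n\mathcal{H}om(F',N) = Ext_{\G}^n(M',N)$, $H^n\mathcal{H}om(F'\oplus F'',N) = Ext_{\G}^n(M,N)$ and $H^n\mathcal{H}om(F'',N) = Ext_{\G}^n(M'',N)$; substituting gives the asserted long exact sequence. The only point requiring care is that these identifications presuppose that $Ext_{\G}^n(-,N)$ does not depend on the chosen special Gorenstein projective precover, which is exactly the content of the uniqueness up to homotopy of such precovers established earlier (and which in particular licenses using the precovers $F'$, $F'\oplus F''$, $F''$ coming out of the Horseshoe Lemma). I expect this bookkeeping — together with the routine check that the connecting maps of the cohomology sequence are the standard ones and therefore fit into a single long exact sequence — to be the only, and still elementary, obstacle.
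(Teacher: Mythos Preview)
Your proof is correct and follows essentially the same approach as the paper: apply the Horseshoe Lemma to obtain the degreewise-split short exact sequence of precovers, apply $\mathcal{H}om(-,N)$, and read off the long exact cohomology sequence. Your additional remarks on the degreewise splitting and on the well-definedness of $Ext_{\G}^n$ via homotopy-uniqueness of special precovers are accurate and make explicit points the paper leaves implicit.
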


  \begin{proof}
By the Horseshoe Lemma there exists an exact sequence $0 \rightarrow
F' \rightarrow F' \oplus F" \rightarrow F" \rightarrow 0$ with $F'
\rightarrow M'$, $F' \oplus F" \rightarrow M$ and $F" \rightarrow
M"$ special Gorenstein projective precovers. For each complex $N$ we
have an exact sequence $0 \rightarrow \mathcal{H}om (F", N)
\rightarrow \mathcal{H}om (F' \oplus F", N) \rightarrow
\mathcal{H}om (F', N) \rightarrow 0$ and therefore an exact
sequence: $\ldots \rightarrow H^n \mathcal{H}om (F", N) \rightarrow
H^n \mathcal{H}om (F'\oplus F", N) \rightarrow H^n \mathcal{H}om
(F', N) \rightarrow \ldots$.

  \end{proof}

The same argument as in Lemma 5 gives:\\
\begin{lemma}
If $0 \rightarrow M' \rightarrow M \rightarrow M" \rightarrow 0$ is
a $Hom (DG Proj, -)$ exact sequence then there exists an exact
sequence $0 \rightarrow P' \rightarrow P' \oplus P" \rightarrow P"
\rightarrow 0$ with $P' \rightarrow M'$, $P' \oplus P" \rightarrow
M$, and $P" \rightarrow M"$ surjective DG-projective resolutions.
\end{lemma}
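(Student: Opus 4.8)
The plan is to mimic the proof of Lemma 5 essentially verbatim, replacing ``special Gorenstein projective precover'' by ``surjective DG-projective resolution'' throughout. First I would choose surjective DG-projective resolutions $\phi':P' \rightarrow M'$ and $\phi'':P'' \rightarrow M''$ (these exist by \cite{enochs:96:orthogonality}, Corollary 3.10). Since $P''$ is DG-projective and $0 \rightarrow M' \xrightarrow{l} M \xrightarrow{h} M'' \rightarrow 0$ is $Hom(DG Proj,-)$ exact, the map $Hom(P'',M) \rightarrow Hom(P'',M'')$ is surjective, so there is $u \in Hom(P'',M)$ with $hu = \phi''$. Then I define $\phi: P' \oplus P'' \rightarrow M$ by $\phi_n(x,y) = l_n\phi'_n(x) + u_n(y)$; it fits into a commutative diagram whose rows are the degreewise split sequence $0 \rightarrow P' \rightarrow P' \oplus P'' \rightarrow P'' \rightarrow 0$ and $0 \rightarrow M' \rightarrow M \rightarrow M'' \rightarrow 0$.

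Next I would apply the snake lemma to this diagram. Because $\phi'$ and $\phi''$ are surjective, the connecting homomorphism forces $\phi$ to be surjective as well, and yields a short exact sequence of complexes $0 \rightarrow \Ker\phi' \rightarrow \Ker\phi \rightarrow \Ker\phi'' \rightarrow 0$. Since $\phi'$ and $\phi''$ are quasi-isomorphisms, the complexes $\Ker\phi'$ and $\Ker\phi''$ are exact; the associated long exact homology sequence then shows $\Ker\phi$ is exact, i.e. $\phi$ is a quasi-isomorphism.

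Finally, $P' \oplus P''$ is a direct sum of DG-projective complexes, hence itself DG-projective: each component $P'_n \oplus P''_n$ is projective, and $\mathcal{H}om(P'\oplus P'',E) \simeq \mathcal{H}om(P',E) \oplus \mathcal{H}om(P'',E)$ is exact for every exact complex $E$. Thus $\phi: P'\oplus P'' \rightarrow M$ is a surjective DG-projective resolution, and together with $\phi':P'\rightarrow M'$ and $\phi'':P''\rightarrow M''$ it gives the required short exact sequence $0 \rightarrow P' \rightarrow P' \oplus P'' \rightarrow P'' \rightarrow 0$.

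I do not expect a genuine obstacle here; the only place the hypothesis is really used is to lift $\phi''$ along $h$, exactly as in Lemma 5. The one point worth a line of care is the analogue of the ``finite projective dimension of $\Ker\phi$'' step: here what makes $\phi$ qualify as a DG-projective resolution (rather than a special Gorenstein projective precover) is precisely that exactness of $\Ker\phi'$ and $\Ker\phi''$ passes to $\Ker\phi$ through the above extension of complexes. Everything else is the same diagram chase.
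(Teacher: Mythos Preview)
Your proposal is correct and is precisely what the paper intends: it states only ``The same argument as in Lemma 5 gives'' Lemma 6, and your write-up carries out that analogy in detail, including the one genuine adjustment (exactness of $\Ker\phi$ in place of finite projective dimension) needed to conclude that $\phi$ is a surjective DG-projective resolution.
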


\begin{proposition}
If $0 \rightarrow M' \rightarrow M \rightarrow M" \rightarrow 0$ is
a $Hom (Gor Proj, -)$ exact sequence then for each complex $N$ we
have an exact sequence: $\ldots \rightarrow \overline{Ext}^n (M", N)
\rightarrow \overline{Ext}^n (M, N) \rightarrow \overline{Ext}^n
(M', N) \rightarrow \overline{Ext}^{n+1} (M", N) \rightarrow
\ldots$.
\end{proposition}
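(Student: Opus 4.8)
The plan is to reduce the statement to a single short exact sequence of mapping cones by combining the two Horseshoe lemmas (Lemma 5 and Lemma 6), and then to pass to the long exact cohomology sequence. First I would observe that the given sequence $0 \to M' \xrightarrow{l} M \xrightarrow{h} M'' \to 0$, being $Hom(Gor Proj,-)$ exact, is also $Hom(DG Proj,-)$ exact: over a Gorenstein ring every DG-projective complex has projective, hence Gorenstein projective, components, so it is a Gorenstein projective complex by Theorem 3. Thus Lemma 5 yields special Gorenstein projective precovers $\phi'\colon G' \to M'$, $\phi\colon G'\oplus G'' \to M$, $\phi''\colon G'' \to M''$ sitting in a short exact sequence $0 \to G' \to G'\oplus G'' \to G'' \to 0$ with canonical inclusion and projection, hence split in each degree; and Lemma 6 yields surjective DG-projective resolutions $\beta'\colon P' \to M'$, $\beta\colon P'\oplus P'' \to M$, $\beta''\colon P'' \to M''$ in a degreewise split short exact sequence $0 \to P' \to P'\oplus P'' \to P'' \to 0$. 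Recall from those proofs that $\phi(x,y) = l\phi'(x) + w(y)$ with $hw = \phi''$, and $\beta(x,y) = l\beta'(x) + z(y)$ with $hz = \beta''$.

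Next I would produce compatible lifts. Since $P'$ and $P''$ are Gorenstein projective and $\phi',\phi''$ are Gorenstein projective precovers, there are chain maps $u'\colon P' \to G'$ and $u''\colon P'' \to G''$ with $\phi' u' = \beta'$ and $\phi'' u'' = \beta''$. I want $u\colon P'\oplus P'' \to G'\oplus G''$ of block form $u(x,y) = (u'(x)+t(y),\,u''(y))$ for a suitable chain map $t\colon P'' \to G'$; any such $u$ is automatically a chain map (with respect to the direct sum differentials), restricts to $u'$ on $P'$, and projects to $u''$ on $P''$. A short computation gives $\phi\circ u - \beta = l\circ(\phi' t + \epsilon)$, where $\epsilon\colon P'' \to M'$ is the chain map determined by $w u'' - z = l\circ\epsilon$ — which exists because $h(wu''-z) = \phi'' u'' - \beta'' = 0$ and $l$ identifies $M'$ with the kernel of $h$. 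By the precover property of $\phi'$ applied to the Gorenstein projective complex $P''$, one picks a chain map $t\colon P'' \to G'$ with $\phi' t = -\epsilon$; then $\phi u = \beta$, so $u$ is induced by $1_M$. Because the lower-left block of $u$ vanishes, the canonical inclusion $M(u') \to M(u)$ and projection $M(u) \to M(u'')$ are maps of complexes, giving a short exact sequence $0 \to M(u') \to M(u) \to M(u'') \to 0$ that is split in each degree.

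Finally, applying $\mathcal{H}om(-,N)$ to this degreewise split short exact sequence produces a short exact sequence of complexes $0 \to \mathcal{H}om(M(u''),N) \to \mathcal{H}om(M(u),N) \to \mathcal{H}om(M(u'),N) \to 0$, whose long exact cohomology sequence reads, after the shift $\overline{Ext}^n(-,N) = H^{n+1}\mathcal{H}om(M(-),N)$, as $\ldots \to \overline{Ext}^n(M'',N) \to \overline{Ext}^n(M,N) \to \overline{Ext}^n(M',N) \to \overline{Ext}^{n+1}(M'',N) \to \ldots$. Here one uses that $M(u')$, $M(u)$, $M(u'')$ compute $\overline{Ext}^\bullet(M',N)$, $\overline{Ext}^\bullet(M,N)$, $\overline{Ext}^\bullet(M'',N)$ respectively, which holds because $u'$, $u$, $u''$ are induced by $1_{M'}$, $1_M$, $1_{M''}$ and $\overline{Ext}$ is well defined (independent of the chosen DG-projective resolution and special Gorenstein projective precover, via the homotopy equivalences established before Definition 11).

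The main obstacle is the construction of $u$ in the second step: the naive choice $u = u'\oplus u''$ need not induce $1_M$, in which case $M(u'\oplus u'')$ does not compute $\overline{Ext}^\bullet(M,N)$ and the long exact sequence would degenerate to the trivial split one. Getting the off-diagonal correction term $t$ right — and in particular seeing that the Gorenstein projective precover property of $\phi'$ supplies it as a genuine chain map with $\phi' t = -\epsilon$ — is the crux; along the way one must also pin down the signs in the mapping cone differential to be certain that $0 \to M(u') \to M(u) \to M(u'') \to 0$ really consists of chain maps.
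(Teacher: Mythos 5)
Your proof is correct and follows essentially the same route as the paper: use Lemma 5 and Lemma 6 to get compatible special Gorenstein projective precovers and surjective DG-projective resolutions, construct a lift $P'\oplus P'' \rightarrow G'\oplus G''$ of $1_M$ whose lower-left block vanishes, and take the long exact cohomology sequence of the resulting degreewise split short exact sequence of mapping cones after applying $\mathcal{H}om(-,N)$. The only (inessential) difference is how the off-diagonal entry is produced: the paper lifts the restriction of the resolution $P'\oplus P''\rightarrow M$ to $P''$ through the special precover of $M$ in one step, whereas you fix a lift $u''$ of $1_{M''}$ and then solve for the correction $t$ via the precover of $M'$; both yield the same block-triangular lift, and your preliminary remark that $Hom(Gor\,Proj,-)$ exactness gives $Hom(DG\,Proj,-)$ exactness (needed to invoke Lemma 6) is a point the paper leaves implicit.
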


\begin{proof}
By Lemma 5 and Lemma 6 we have commutative diagrams:

\[
\begin{diagram}
\node{0}\arrow{e}\node{P'}\arrow{s,r}{\phi'}\arrow{e,t}{j}
\node{P'\oplus P''}\arrow{s,r}{\phi}\arrow{e,t}{\pi}
\node{P''}\arrow{s,r}{\phi''}\arrow{e}\node{0}\\
\node{0}\arrow{e}\node{M'}\arrow{e,t}{l}\node{M}\arrow{e,t}{h}
\node{M''}\arrow{e}\node{0}
\end{diagram}
\]

\[
\begin{diagram}
\node{0}\arrow{e}\node{G'}\arrow{s,r}{\tau'}\arrow{e,t}{i}
\node{G'\oplus G''}\arrow{s,r}{\tau}\arrow{e,t}{p}
\node{G''}\arrow{s,r}{\tau''}\arrow{e}\node{0}\\
\node{0}\arrow{e}\node{M'}\arrow{e,t}{l}\node{M}\arrow{e,t}{h}
\node{M''}\arrow{e}\node{0}
\end{diagram}
\]



with $P' \xrightarrow{\phi'} M'$, $P'\oplus P"  \xrightarrow{\phi}
M$, $P" \xrightarrow{\phi"} M"$ surjective DG-projective
resolutions, and with $G' \xrightarrow{\tau'} M'$, $G' \oplus G"
\xrightarrow{\tau} M$ and $G" \xrightarrow{\tau"} M"$ special
Gorenstein projective precovers.\\
Since $P'$ is Gorenstein projective there exists a map of complexes
$u: P' \rightarrow G'$ such that $\tau' u = \phi'$.\ Let $\alpha: P"
\rightarrow M$ be given by $\alpha_j(y) = \phi_j(0,y)$. Since $P"$
is Gorenstein projective and $G' \oplus G" \xrightarrow{\tau} M$ is
a Gorenstein projective precover there exists $\beta: P" \rightarrow
G' \oplus G"$ such that $\tau \beta = \alpha$.\\
Let $\omega : P' \oplus P" \rightarrow G' \oplus G"$ be defined by
$\omega_j(x,y) = (u_j(x), 0) + \beta_j(y)$.

We have an exact sequence of complexes $0 \rightarrow M(u)
\xrightarrow{(i, j)} M(\omega) \xrightarrow{(p, \pi)} M(p \beta)
\rightarrow 0$ (where $p: G' \oplus G" \rightarrow G"$, $p_n (x,y)
=y$, $\pi: P' \oplus P" \rightarrow P"$, $\pi_n(z,t)=t$, $i:G'
\rightarrow G' \oplus G"$, $i_n(x) = (x,0)$, $j: P' \rightarrow P'
\oplus P"$, $j_n
(y) = (y,0)$).\\
The sequence is split exact in each degree, so for each complex $N$
there is an exact sequence $0 \rightarrow \mathcal{H}om (M(p \beta),
N) \rightarrow \mathcal{H}om (M(\omega), N) \rightarrow
\mathcal{H}om (M(u), N) \rightarrow 0$, and therefore an exact
sequence\\ $\ldots \rightarrow \overline{Ext}^n (M", N) \rightarrow
\overline{Ext}^n (M,N) \overline{Ext}^n (M', N') \rightarrow
\overline{Ext}^ {n+1} (M,N) \rightarrow \ldots$.

\end{proof}

In \cite{veliche:04:gorenstein} Veliche defined Tate cohomology functors for complexes of finite Gorenstein projective dimension over
arbitrary rings.\\

\begin{definition}(\cite{veliche:04:gorenstein}, Definition 4.1)
Let $M$ be a complex of finite Gorenstein projective dimension, let
$T \rightarrow P \rightarrow M$ be a complete resolution of $M$ and
let $N$ be an arbitrary complex. For each integer $n$, the $n$th
Tate cohomology group is defined by $\widehat{Ext} ^n _R (M, N) =
H^n \mathcal{H}om (T, N)$.
\end{definition}

 Let
$R$ be a Gorenstein ring. We show that for a bounded complex $M: 0
\rightarrow M_n \rightarrow \ldots \rightarrow M_0 \rightarrow 0$,
we have $\overline{Ext} ^j (M,N) \simeq \widehat{Ext}^j (M,N) $ for
any  $j
> n$, for any module $N$.

We show first\\
\begin{lemma}  Let $R$ be a Gorenstein ring. A bounded complex $M(n) = 0 \rightarrow M_n \rightarrow \ldots
\rightarrow M_0 \rightarrow 0$ has a Gorenstein projective precover
$G(n) \rightarrow M(n)$ with $G(n) = 0 \rightarrow G_k
\xrightarrow{g_k} \ldots \rightarrow G_0 \rightarrow 0$, such that
$G_i$ is projective 
for $i> n$,
 such that $L^n=Ker(G(n) \rightarrow M(n))$ is a complex
of finite projective dimension, and $L_k^{n+1} = L_k^n$ for $0 \le k
\le n$.\end{lemma}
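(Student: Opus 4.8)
The plan is to build the precover $G(n)\to M(n)$ by induction on $n$, using the existence of special Gorenstein projective precovers over a Gorenstein ring (Garc\'ia-Rozas) together with Theorem~3, which says a complex is Gorenstein projective over a Gorenstein ring precisely when it is Gorenstein projective in each degree. For $n=0$ the complex is $M(0)=0\to M_0\to 0$; take a special Gorenstein projective precover $G_0\to M_0$ of the module $M_0$ in the usual sense and regard it as a complex concentrated in degree $0$, so $G(0)=0\to G_0\to 0$ and $L^0=0\to \Ker(G_0\to M_0)\to 0$, which has finite projective dimension because $G_0\to M_0$ is a special precover of modules. The conditions ``$G_i$ projective for $i>n$'' and $L_k^{n+1}=L_k^n$ for $0\le k\le n$ are vacuous or trivial at this base stage.

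For the inductive step, assume we have $G(n)=0\to G_k\xrightarrow{g_k}\cdots\to G_0\to 0$ with the stated properties, mapping onto $M(n)$, and consider $M(n+1)=0\to M_{n+1}\to M_n\to\cdots\to M_0\to 0$. The idea is to keep the lower part of $G(n)$ and splice a projective module onto the top to cover the new entry $M_{n+1}$. Concretely, first extend $M(n)$ to $M(n+1)$ and lift: since $M(n+1)\to M(n)$ (truncation) is a surjection in each degree and $G(n)\to M(n)$ is a precover, one can produce a projective module $Q$ surjecting onto the fiber-product-type object controlling the degree $n+1$ piece, i.e. onto the pullback of $M_{n+1}\to M_n\leftarrow G_n$ (or onto $\Ker(G(n)\to M(n))$ in degree $n$ together with $M_{n+1}$), and set $G_{n+1}=Q$, $G_i$ for $i>n+1$ obtained by continuing a projective resolution of the relevant kernel so that $G(n+1)$ remains exact where needed. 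Because $Q$ and all higher $G_i$ are projective and the lower terms $G_0,\dots,G_n$ are Gorenstein projective modules (being the components of the Gorenstein projective complex $G(n)$), Theorem~3 guarantees $G(n+1)$ is a Gorenstein projective complex; and the precover property for $G(n+1)\to M(n+1)$ follows from that of $G(n)\to M(n)$ together with surjectivity of $\Hom(G',Q)\to\Hom(G',M_{n+1})$-type maps, using that any Gorenstein projective complex $G'$ has projective components and $Q$ is chosen surjective onto the relevant module. By construction $L^{n+1}_k=L^n_k$ for $0\le k\le n$ since we did not alter those degrees, and $L^{n+1}$ has finite projective dimension because it agrees with $L^n$ below degree $n$ and acquires only projective (or finite-projective-dimension) terms in degrees $\ge n$, so it is an exact complex with all components and all cycles of finite projective dimension, hence in $\L$ by the Garc\'ia-Rozas characterization recalled before Theorem~3.

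The main obstacle is the simultaneous bookkeeping in the inductive step: one must choose the new top term $G_{n+1}$ and the continuation in degrees $>n+1$ so that (i) $G(n+1)\to M(n+1)$ is still a genuine precover (not merely a map onto), (ii) the kernel complex $L^{n+1}$ still has finite projective dimension — which requires checking that the added kernels in degrees $n$ and $n+1$ have finite projective dimension and that $L^{n+1}$ remains exact — and (iii) the lower degrees are literally unchanged so that $L^{n+1}_k=L^n_k$. The cleanest way to arrange (i) is to take a projective module $Q$ mapping onto $M_{n+1}\times_{M_n}\Ker(G_n\to M_n)$ type data; for (ii), use that $\Ker(g_{n+1})=\Ker(Q\to\cdots)$ is projective (as $Q$ is projective and we continue with a projective resolution), and that the short exact sequence relating $L^{n+1}$, $L^n$ and the new projective contributions forces finite projective dimension via the two-out-of-three property for the class $\L$. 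Once these choices are pinned down, everything else is routine diagram-chasing, and invoking Theorem~3 to pass between ``Gorenstein projective complex'' and ``degreewise Gorenstein projective'' makes the verification that $G(n+1)$ is Gorenstein projective immediate.
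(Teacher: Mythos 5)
There is a genuine gap, and it is in the decisive step of the induction: you put a \emph{projective} module $Q$ in degree $n+1$ ("set $G_{n+1}=Q$"). With that choice $G(n+1)\rightarrow M(n+1)$ is in general not a Gorenstein projective precover. Take, for instance, $M_0=\cdots =M_n=0$ and $M_{n+1}$ a Gorenstein projective module that is not projective, and let $A$ be the complex with $A_{n+1}=M_{n+1}$ concentrated in degree $n+1$; by Theorem 3 this $A$ is a Gorenstein projective complex, and a lift of the identity map $A\rightarrow M(n+1)$ through your $G(n+1)$ would give maps $M_{n+1}\rightarrow Q\rightarrow M_{n+1}$ composing to $1_{M_{n+1}}$, making $M_{n+1}$ a direct summand of the projective $Q$ --- a contradiction. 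The statement only demands projectivity of $G_i$ for $i>n+1$; the degree $n+1$ component has to be a genuine Gorenstein projective precover of $M_{n+1}$, which is exactly what the paper uses (a special Gorenstein projective resolution $G'$ of the module $M_{n+1}$, spliced in via the mapping cone of the induced map $u:G'\rightarrow \overline{G}$). Your justification for the lifting, "any Gorenstein projective complex $G'$ has projective components," is false: Theorem 3 gives Gorenstein projective components, and for a Gorenstein projective module one only has $Ext^1(A_{n+1},K)=0$ when $K$ has finite projective dimension, not for the kernel of an arbitrary projective surjection onto your pullback.

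A second, related omission is the chain-map compatibility, which is where the paper's proof spends most of its effort and where the induction hypothesis that $L^n$ has \emph{finite projective dimension} is actually used. Given a Gorenstein projective complex $A$ and $\omega:A\rightarrow M(n+1)$, one lifts the truncation to $\overline{G}$ and lifts $\omega_{n+1}$ through $G'_0\rightarrow M_{n+1}$, but the two lifts need not commute with the differentials; the paper shows the discrepancy $\gamma_n a_{n+1}-\overline{g}_{n+1}(0,t_0)$ lands in $Ker\, g_n\cap L^n_n$ and kills it using exactness of $Hom(A_{n+1},L^n)$ (finite projective dimension of $L^n$), and then continues in degrees $\ge n+2$ using that the tail of the cone is a bounded exact complex of projectives, hence of finite projective dimension. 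Your proposal defers all of this to "routine diagram-chasing," but it is precisely the nontrivial content of the lemma; without it (and with the projective $Q$ in degree $n+1$) the construction does not yield a precover, so the argument as written does not go through.
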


\begin{proof}

Proof by induction on n:

Case $n=0$. Then $M=0 \rightarrow M_0 \rightarrow 0$\\
Let $ 0 \rightarrow G_k \rightarrow G_{k-1} \xrightarrow{g_k} \ldots
\rightarrow G_0 \xrightarrow{g_0}  M_0 \rightarrow 0$ be a special
Gorenstein projective resolution of $M_0$, 
and let $G = \ldots \rightarrow G_1 \rightarrow G_0 \rightarrow 0$.

Then $G \rightarrow M$ is a special Gorenstein projective
precover 
and $Ker (G \rightarrow M)$ is a complex of finite
projective dimension.

Case $ n \rightarrow n+1$. Let $M = 0 \rightarrow M_{n+1}
\xrightarrow{l_{n+1}} M_n \rightarrow
\ldots M_1 \xrightarrow{l_1} M_0 \rightarrow 0$.\\
There is a map of complexes:

\[
\begin{diagram}
\node{M':}\arrow{s,r}{l}\node{0}\arrow{e}
\node{M_{n+1}}\arrow{s,r}{l_{n+1}}\arrow{e}\node{0}\\
\node{\overline{M}:}\node{0}\arrow{e}
\node{M_n}\arrow{e,t}{l_n}\node{\ldots}\arrow{e}\node{M_1}
\arrow{e,t}{l_1}\node{M_0}\arrow{e}\node{0}
\end{diagram}
\]

By induction hypothesis there is a special Gorenstein projective
precover $\overline{G} \xrightarrow{\alpha} \overline{M}$ where $
\overline{G} =  \ldots  \xrightarrow{g_{n+1}} G_n \xrightarrow{g_n}
G_{n-1} \rightarrow \ldots \xrightarrow{g_0} G_0 \rightarrow 0$ is a
bounded complex with $G_i$ projective for $i
\ge n+1$.\\ 
Let $G' = \ldots \rightarrow G'_0 \xrightarrow{g'_0} M_{n+1}
\rightarrow 0$ be a special Gorenstein projective resolution of
$M_{n+1}$. Then $G_i$ is projective for all $k \ge 1$. The map $M'
\xrightarrow{l} \overline{M}$ induces a map of complexes $u: G'
\rightarrow \overline{G}$.\\


\[
\begin{diagram}
\node{\ldots}\arrow{e}\node{G'_1}\arrow{s,r}{u_1}\arrow{e}\node{G'_0}
\arrow{s,r}{u_0}\arrow{e}\node{0}\arrow{e}\node{0}\\
\node{\ldots}\arrow{e}\node{G_{n+1}}\arrow{e}\node{G_n}\arrow{e,t}{g_n}
\node{G_{n-1}}\arrow{e}\node{G_{n-2}}\arrow{e}\node{\ldots}
\end{diagram}
\]

such that the diagram\\
\[
\begin{diagram}
\node{G'}\arrow{s,r}{u}\arrow{e,t}{g}\node{M'}\arrow{s,r}{l}\\
\node{\overline{G}}\arrow{e,t}{\alpha}\node{\overline{M}}
\end{diagram}
\]

is commutative. In particular, $\alpha_n u_0 = l_{n+1} g'_0$.\\



Let $G$ be the mapping cone of $u$:\\
$G= \ldots  \rightarrow G_{n+1} \oplus G'_0
\xrightarrow{\overline{g}_{n+1}} G_n \xrightarrow{g_n} G_{n-1}
\rightarrow \ldots \rightarrow G_0 \rightarrow 0$ with
$\overline{g}_{n+1} (x, y) = g_{n+1} (x) +u_0(y)$ and
$\overline{g}_{n+k}(x,y)=(g_{n+k}(x)+u_k(y), -g'_k(y))$ for $k \ge
1$.

We show that $G \rightarrow M$ is a special Gorenstein projective
precover (with $G_i$ projective for $i > n+1$).

Let $A$ be a Gorenstein projective complex and let $\omega : A
\rightarrow M$; this gives a map of complexes $ \omega: \overline{A}
\rightarrow \overline{M}$ where $\overline{A} = 0 \rightarrow A_n
\rightarrow \ldots \rightarrow A_0 \rightarrow 0$.

\[
\begin{diagram}
\node{0}\arrow{e}\node{A_n}\arrow{s,r}{\omega_n}\arrow{e}\node{A_{n-1}}\arrow{s,r}{\omega_{n-1}}\arrow{e}
\node{\ldots}\arrow{e}\node{A_1}\arrow{s,r}{\omega_1}\arrow{e}\node{A_0}\arrow{s,r}{\omega_0}\arrow{e}
\node{0}\\
\node{0}\arrow{e}\node{M_n}\arrow{e}\node{M_{n-1}}\arrow{e}\node{\ldots}\arrow{e}\node{M_1}\arrow{e}
\node{M_0}\arrow{e}\node{0}
\end{diagram}
\]

Since $\overline{G} \xrightarrow{\alpha} \overline{M}$ is a
Gorenstein projective precover there exists a map $\gamma:
\overline{A} \rightarrow \overline{G}$ such that $\alpha \circ
\gamma =
\omega$.\\
Since $G'_0 \xrightarrow{g'_0} M_{n+1}$ is a Gorenstein projective
precover and $A_{n+1}$ is a Gorenstein projective module, there
exists $t_0:A_{n+1} \rightarrow G'_0$ such that $g'_0 t_0 =
\omega_{n+1}$.




\[
\begin{diagram}
\node{\ldots}\arrow{e}\node{A_{n+2}}\arrow{e,t}{a_{n+2}}\node{A_{n+1}}
\arrow{s,r}{(0,t_0)}\arrow{e,t}{a_{n+1}}\node{A_n}\arrow{s,r}{\gamma_n}\arrow{e}
\node{\ldots}\\
\node{\ldots}\arrow{e}\node{G_{n+2}\oplus
G'_1}\arrow{e,t}{{\overline{g}}_{n+2}}\node{G_{n+1}\oplus
G'_0}\arrow{s,r}{(0,g'_0)}\arrow{e,t}{\overline{g}_{n+1}}\node{G_n}\arrow{s,r}{\alpha_n}\arrow{e,t}{g_n}
\node{\ldots}\\
\node{}\node{0}\arrow{e}\node{M_{n+1}}\arrow{e,t}{l_{n+1}}\node{M_n}\arrow{e}\node{\ldots}
\end{diagram}
\]

Then $(0, g'_0)(0, t_0) = \omega_{n+1}$ and $\alpha_n
\overline{g}_{n+1} =
l_{n+1} (0,g'_0)$.\\
 Both $\overline{G}$ and
$G'$ are bounded, so $G$ is also a bounded complex. Since
$H_j(\overline{G})=0 = H_j(G')$ for any $j \ge n+1$ it follows that
$H_j(G)=0$ for $j \ge n+1$. Then the complex $ \ldots \rightarrow
G_{n+3} \oplus G'_2 \xrightarrow{\overline{g}_{n+3}} G_{n+2} \oplus
G'_1 \xrightarrow{\overline{g}_{n+2}} Im \overline{g}_{n+2}
\rightarrow 0$ is exact, bounded, with $G_{n+k} \oplus G'_{k-1}$
projective for $k \ge 2$. It follows that $Im \overline{g}_j$ has
finite projective dimension for any $j
> n+1$.

Since $g_n(\gamma_n a_{n+1} - \overline{g}_{n+1}(0,t_0)) = 0$ and
also, $\alpha_n (\gamma_n a_{n+1} - \overline{g}_{n+1}(0,t_0)) =
\omega_n a_{n+1} - l_{n+1} (0, g'_0)(0,t_0)=0$, 
we have $\gamma_n a_{n+1} - \overline{g}_{n+1}(0,t_0): A_{n+1}
\rightarrow Ker g_n \bigcap Ker \alpha_n = Ker g_n \bigcap L_n^n$.
\hspace{69mm} (4)

By induction hypothesis $L^n= \ldots L_{n+1}^n \xrightarrow{g_{n+1}}
L_n^n \xrightarrow{g_n} L_{n-1}^n \rightarrow \dots$
has finite projective dimension. 
 Since $A_{n+1}$ is Gorenstein projective, the complex $Hom(A_{n+1},
L^n)$ is exact. By (4), $\gamma_n a_{n+1} -
\overline{g}_{n+1}(0,t_0)= g_{n+1}h$ for some $h:A_{n+1} \rightarrow
L_{n+1} \subset G_{n+1}$. Thus $\gamma_n a_{n+1} =
 \overline{g}_{n+1}
\gamma_{n+1}$ with $\gamma_{n+1}: A_{n+1} \rightarrow G_{n+1} \oplus
G'_0$, $\gamma_{n+1}=(h, t_0)$. 

We have $\gamma_{n+1} a_{n+2}: A_{n+2} \rightarrow Ker
\overline{g}_{n+1} = Im \overline{g}_{n+2}$. Since the complex:\\
$ \ldots \rightarrow G_{n+k} \oplus G'_{k-1} \rightarrow \ldots
\rightarrow G_{n+2} \oplus G'_1 \xrightarrow{\overline{g}_{n+2}} Im
\overline{g}_{n+2} \rightarrow 0$\\ has finite projective dimension
and $A_{n+2}$ is Gorenstein projective, it follows that
 $ \ldots
 \rightarrow
Hom(A_{n+2}, G_{n+2} \oplus G'_1) \rightarrow Hom(A_{n+2}, Im
g_{n+2}) \rightarrow 0$ is exact. Then $\gamma_{n+1} a_{n+2} =
\overline{g}_{n+2} \gamma_{n+2}$ for some $\gamma_{n+2} \in
Hom(A_{n+2}, G_{n+2} \oplus G'_1)$. Similarly there exists
$\gamma_{n+k}$ such that $\gamma_{n+k}
a_{n+k+1} = \overline{g}_{n+k+1} \gamma_{n+k+1}$ for $k \ge 2$.\\
Since $L^{n+1} = \ldots G_{n+2} \oplus G'_1
\xrightarrow{\overline{g}_{n+2}} G_{n+1} \oplus Ker g'_0 \rightarrow
Ker \alpha_n \rightarrow \ldots \rightarrow Ker \alpha_0 \rightarrow
0$ and $L^n = \ldots \rightarrow G_{n+2} \rightarrow G_{n+1}
\rightarrow Ker \alpha_n \rightarrow \ldots \rightarrow Ker \alpha_0
\rightarrow 0$ we have $L^n_k = L^{n+1}_k$ for $0 \le k \le n$.\\
Since $\frac{L^{n+1}}{L^n} \simeq \ldots \rightarrow G'_2
\xrightarrow{g'_2} G'_1 \xrightarrow{g'_1} Ker g'_0 \rightarrow 0$,
the module $Ker g'_0$ has finite projective dimension and $G'_j$ is
projective for each $j \ge 1$, it follows that $\frac{L^{n+1}}{L^n}$
has finite projective dimension. By induction hypothesis $L^n$ has
finite projective dimension. The exact sequence $0 \rightarrow L^n
\rightarrow L^{n+1} \rightarrow \frac{L^{n+1}}{L^n} \rightarrow 0$
gives that $L^{n+1}$ has finite projective dimension.
\end{proof}

\begin{proposition}
Let $R$ be a Gorenstein ring. If $M=0 \rightarrow M_n \rightarrow
M_{n-1} \rightarrow \ldots \rightarrow M_1 \rightarrow M_0
\rightarrow 0$ is a bounded complex then $\overline{Ext}^j (M,N)
\simeq \widehat{Ext}^j (M, N)$ for $j > n$, for any module $N$.
\end{proposition}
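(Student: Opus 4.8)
The plan is to use the precover of Lemma 7 to turn the mapping cone defining $\overline{Ext}$ into the top of a complete resolution of $M$ in Veliche's sense, and then to observe that above degree $n$ both theories are computed from that common, purely projective, piece.

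First I would fix the data. Let $P\to M$ be a surjective DG-projective resolution with $P_i=0$ for $i<0$, and let $\phi\colon G\to M$ be a (surjective) special Gorenstein projective precover as in Lemma 7: $G_i=0$ for $i<0$, $G_i$ is projective for $i>n$, and $L=\Ker\phi$ has finite projective dimension. A complex of finite projective dimension is exact, so $L$ is exact and $\phi$ is a quasi-isomorphism. Since each $P_i$ is projective, hence a Gorenstein projective module, $P$ is a Gorenstein projective complex by Theorem 3; the defining property of the Gorenstein projective precover $\phi$ then lifts $P\to M$ to a chain map $u\colon P\to G$, which is a quasi-isomorphism, so the mapping cone $M(u)$, with $M(u)_i=G_{i+1}\oplus P_i$, is exact. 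Note $M(u)_i$ is projective for every $i\ge n$, and recall $\overline{Ext}^{\,k}(M,N)=H^{k+1}\mathcal{H}om(M(u),N)$.

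The technical core is to complete $M(u)$ downwards. Every term of the exact complex $M(u)$ is a Gorenstein projective module (projective above degree $n$, of the form $G_{i+1}\oplus P_i$ with $G_{i+1}$ Gorenstein projective below, again by Theorem 3), and $M(u)$ is bounded below with bottom cycle $G_0$; since the class of Gorenstein projective modules is projectively resolving, induction upwards from the bottom shows that every cycle of $M(u)$ is Gorenstein projective. Hence the hard truncation $M(u)_{\ge n}$ is a projective resolution of a Gorenstein projective module, so it splices onto a totally acyclic complex of projectives; accounting for the shift by one between the index $k+1$ in the definition of $\overline{Ext}$ and the index $k$ in Veliche's definition of $\widehat{Ext}$, this yields a totally acyclic complex $T$ with $T_i=M(u)_{i+1}$ for all $i\ge n-1$. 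Using the degreewise-split exact sequence $0\to G\to M(u)\to P[1]\to 0$ from the proof of Proposition 9, a Horseshoe/comparison argument in the spirit of Lemmas 5 and 6, and the fact that $L$ has finite projective dimension --- so that passing between $G$, $M(u)$ and a DG-projective resolution of $M$ does not change cohomology above degree $n$ --- one fits $T$ into a complete resolution $T\xrightarrow{w}P'\to M$ of $M$ in Veliche's sense.

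Finally, for $j>n$ the group $\widehat{Ext}^{\,j}(M,N)=H^{j}\mathcal{H}om(T,N)$ depends only on $T_{j-1},T_j,T_{j+1}$ and the two obvious maps, and $\overline{Ext}^{\,j}(M,N)=H^{j+1}\mathcal{H}om(M(u),N)$ depends only on $M(u)_j,M(u)_{j+1},M(u)_{j+2}$ and the two obvious maps. Since $j>n$, both three-term stretches lie in the range $i\ge n-1$ where $T_i=M(u)_{i+1}$ and every module involved is projective, and they coincide with matching differentials; hence $\overline{Ext}^{\,j}(M,N)\simeq\widehat{Ext}^{\,j}(M,N)$ for every $j>n$ and every module $N$. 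The hard part is the completion step: verifying that the bespoke totally acyclic complex $T$ genuinely extends to a complete resolution of $M$ --- producing the DG-projective resolution $P'$ and a comparison map $w$ bijective in all sufficiently high degrees --- so that $\widehat{Ext}$ may legitimately be computed from $T$.
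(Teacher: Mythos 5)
Your overall strategy is the same as the paper's: form the cone $M(u)$ of a lift $u\colon P\rightarrow G$ of $1_M$, note that $M(u)$ is an exact, right bounded complex of Gorenstein projective modules whose terms are projective above degree $n$, truncate at degree $n$, splice the top half (a projective resolution of the Gorenstein projective cycle $Im\,\delta_n$) with a $Hom(-,Proj)$ exact right resolution of that cycle to get a totally acyclic complex agreeing with $M(u)$ in high degrees, and then compare the two cohomologies through a three-term window above degree $n$. All of that matches the paper, and your degree bookkeeping at the end is essentially correct.

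However, there is a genuine gap, and it sits exactly where you place it yourself: you never produce the data that makes your totally acyclic complex $T$ part of a complete resolution of $M$ in Veliche's sense, and the tools you invoke for it are not the right ones. The Horseshoe-type Lemmas 5 and 6 concern short exact sequences of complexes and contribute nothing here, and the finiteness of $pd\,(Ker\,\phi)$ is not the relevant input either. What is actually needed -- and what the paper spends the bulk of its proof doing -- is the construction of a chain map from the spliced complex to a shift of $P$ which is bijective in all sufficiently high degrees. Concretely: in degrees above the splice one uses the canonical projection $M(u)_j=G_j\oplus P_{j-1}\rightarrow P_{j-1}$ coming from the degreewise split sequence $0\rightarrow G\rightarrow M(u)\rightarrow P[1]\rightarrow 0$; this is the identity in degrees where $G$ vanishes (here the boundedness of $G$ from Lemma 7 is essential), which gives the bijectivity "for $i\gg 0$" required by the definition of a complete resolution, and it shows no new DG-projective resolution $P'$ is needed -- the original $P$ serves. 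In degrees below the splice, the induced map $Im\,\delta_n\rightarrow Im\,f_n\subseteq P_{n-1}$ must be extended over the chosen coresolution $0\rightarrow Im\,\delta_n\rightarrow U_{n-1}\rightarrow U_{n-2}\rightarrow\ldots$ degree by degree, using precisely its $Hom(-,Proj)$ exactness together with the projectivity of the modules $P_j$. Without this lifting argument you have a totally acyclic complex but no complete resolution, so $\widehat{Ext}^j(M,N)=H^j\mathcal{H}om(T,N)$ is not yet justified, and the final identification does not follow. Supplying this construction would close the gap and would reproduce the paper's proof.
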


\begin{proof}
By Lemma 7 there is a special Gorenstein projective precover $G
\rightarrow M$ with $G= 0 \rightarrow G_g \rightarrow \ldots
\rightarrow G_0 \rightarrow 0$ a bounded complex and such that $G_j$
is projective for all $j \ge n+1$. By \cite{veliche:04:gorenstein},
1.3.4, there exists a surjective DG-projective resolution $P
\rightarrow M$ with $P= \ldots \rightarrow P_1 \rightarrow P_0
\rightarrow 0$. Since $P$ is Gorenstein projective there is a map of
complexes $u : P \rightarrow G$ such that the diagram

\[
\begin{diagram}
\node{P=\ldots}\arrow{s}\arrow{e}\node{P_{g+1}}\arrow{e}\node{P_g}
\arrow{s,r}{u_g}\arrow{e,t}{f_g}\node{\ldots}\arrow{e}
\node{P_1}\arrow{s,r}{u_1}\arrow{e}\node{P_0}\arrow{s,r}{u_0}\arrow{e}\node{0}\\
\node{G=\ldots}\arrow{e}\node{0}\arrow{e}\node{G_g}\arrow{e,r}{g_g}
\node{\ldots}\arrow{e}\node{G_1}\arrow{e}\node{G_0}\arrow{e}\node{0}
\end{diagram}
\]

is commutative.

Let $M(u) = \ldots  \rightarrow  P_g \xrightarrow{f_g} G_g \oplus
P_{g-1} \xrightarrow{\delta_{g-1}} \ldots \rightarrow G_1 \oplus P_0
\xrightarrow{\delta_0} G_0 \rightarrow 0$ be the mapping cone.

The sequence $0 \rightarrow G \rightarrow M(u) \rightarrow P[1]
\rightarrow 0$ is exact, $H_n(G) = H_n (M) =H_n(P)$ for all n, so
$M(u)$ is exact. Since $M(u)$ is a right bounded exact complex of
Gorenstein projective modules it follows that $Ker \delta_j$ is
Gorenstein projective for
all j.\\
Let $\overline{M} = \ldots \rightarrow G_{n+2} \oplus P_{n+1}
\rightarrow G_{n+1} \oplus P_n \rightarrow Im \delta_n \rightarrow
0$. Each $P_l$ is projective and $G_j$ is projective for $j>n$, so
 $G_j
\oplus P_{j-1}$ is projective for any $j > n$.\\
Since $Im \delta_n$ is a Gorenstein projective module, there is a
$Hom (-, Proj)$ exact exact sequence $0 \rightarrow Im \delta_n
\rightarrow
U_{n-1} \rightarrow U_{n-2} \rightarrow \ldots$ with each $U_j$ a projective module..\\
Let $M' = \ldots G_{n+1} \oplus P_n \rightarrow U_{n-1} \rightarrow
U_{n-2} \rightarrow \ldots$ Then
$M'$ is an exact complex of projective modules that is also $Hom(-, Proj)$ exact.\\

The map of complexes $M(u) \rightarrow P[1]$ gives an
$R$-homomorphism $\omega: Im \delta_n \rightarrow Im f_n$. Since $0
\rightarrow Im \delta_n \rightarrow U_{n-1} \rightarrow U_{n-2}
\rightarrow \ldots$ is a $Hom(-, Proj)$ exact sequence there are
homomorphisms $\omega_j : U_j \rightarrow P_j$ for all $j \le
n-1$ such that the diagram\\

\[
\begin{diagram}
\node{0}\arrow{e}\node{Im\;\delta_n}\arrow{s,r}{\omega}\arrow{e,t}{i}
\node{U_{n-1}}\arrow{s,r}{\omega_{n-1}}\arrow{e,t}{u_n}\node{U_{n-2}}\arrow{s,r}{\omega_{n-2}}\arrow{e}\node{\ldots}\\
\node{0}\arrow{e}\node{Im\;f_n}\arrow{e,t}{j}\node{P_{n-1}}\arrow{e,b}{f_{n-1}}\node{P_{n-2}}\arrow{e}\node{\ldots}
\end{diagram}
\]

is commutative.\\

So there exists an exact complex $M'$ of projective modules that is
$Hom (-, Proj)$ exact, and a map of complexes $M' \xrightarrow{\pi}
P [1]$, with $\pi_j = \omega_j$ for $j \le n-1$, $\pi_l (x,y) = y$
for $l \ge n$ and with $\pi_j = 1_j$ for all $j \ge g+1$ (because
$G_j =0$ for $j \ge g+1$). Then $M'[-1] \rightarrow P \rightarrow M$
is a complete resolution.

Since $M'_j = M(u)_j$ for $j \ge n+1$ it follows that
$\widehat{Ext}^j(M,N) \simeq \overline{Ext}^j (M,N)$ for any $j \ge
n+1$, for any module $N$.

\end{proof}



\begin{thebibliography}{99}
\bibitem{asadollahi:06:gicomplexes}
J.~Asadollahi and S.~Salarian.
\newblock {Gorenstein injective dimension for complexes and Iwanaga-Gorenstein
  rings}.
\newblock {\em Comm. Alg.}, (34):3009--3022, 2006.

\bibitem{avramov:91:homological}
L.~Avramov and H.-B. Foxby.
\newblock Homological dimensions of unbounded complexes.
\newblock {\em J. Pure Appl. Algebra}, (71):129--155, 1991.

\bibitem{avramov:02:absolute}
L.L. Avramov and A.~Martsinkovsky.
\newblock {Absolute, Relative and Tate cohomology of modules of finite
  Gorenstein dimension}.
\newblock {\em Proc. London Math. Soc.}, 3(85):393--440, 2002.

\bibitem{bennis:08:rings}
D.~Bennis.
\newblock {Rings over which the class of Gorenstein flat modules is closed
  under extensions}.
\newblock {\em {Communications in Algebra}}, 37(3):855--868, 2009.

\bibitem{christensen:00:gorenstein}
L.W. Christensen.
\newblock {\em {Gorenstein dimensions}}, volume 1747 of {\em Lecture Notes in
  Math.}
\newblock Springer, Berlin, 2000.

\bibitem{christensen:08:beyound}
L.W. Christensen, H-B. Foxby, and H.~Holm.
\newblock {Beyond Totally Reflexive Modules and Back}.
\newblock Springer-Verlag, to appear.

\bibitem{christensen:06:ongorenstein}
L.W. Christensen, A.~Frankild, and H.~Holm.
\newblock On gorenstein projective, injective and flat dimensions -- a
  functorial description with applications.
\newblock {\em Journal of Algebra}, 302:231--279, 2006.

\bibitem{enochs:00:relative}
E.E. Enochs and O.M.G. Jenda.
\newblock {\em Relative Homological Algebra}.
\newblock Walter de Gruyter, 2000.
\newblock De Gruyter Exposition in Math; 30.

\bibitem{enochs:93:gorenstein}
E.E. Enochs, O.M.G. Jenda, and B.~Torrecillas.
\newblock {Gorenstein flat modules}.
\newblock {\em Journal Nanjing Univ.}, 10:1--9, 1993.

\bibitem{enochs:96:orthogonality}
E.E. Enochs, O.M.G. Jenda, and J.~Xu.
\newblock Orthogonality in the category of complexes.
\newblock {\em Math. J. Okayama Univ.}, 38:25--46, 1996.

\bibitem{holm:04:gorenstein}
H.~Holm.
\newblock Gorenstein homological dimensions.
\newblock {\em J. Pure and Appl. Alg.}, 189:167--193, 2004.

\bibitem{iacob:04:generalized}
A.~Iacob.
\newblock {Generalized Tate cohomology}.
\newblock {\em Tsukuba Journal of Mathematics}, 29(2):389--404, 2005.

\bibitem{garcia:99:covers}
J.R.~Garc{\'i}a Rozas.
\newblock {\em Covers and evelopes in the category of complexes of modules}.
\newblock CRC Press LLC, 1999.

\bibitem{veliche:04:gorenstein}
O.~Veliche.
\newblock {Gorenstein projective dimension for complexes}.
\newblock {\em Trans. Amer. Math. Soc}, (358):1257--1283, 2006.

\bibitem{yassemi:95:gorenstein}
S.~Yassemi.
\newblock Gorenstein dimensions.
\newblock {\em Math. Scand.}, 77:161--174, 1995.

\end{thebibliography}
\end{document}